\def\cal{\mathcal}
\def\AA{{\Bbb A}}
\def\NN{{\Bbb N}}
\def\PP{{\Bbb P}}
\def\ZZ{{\Bbb Z}}
\def\11{{1\kern-3.5pt 1}}
\def\mumu{{\mu\kern-4.2pt\mu}}
\def\boxtimes{\setbox0\hbox{$\Box$}\copy0\kern-\wd0\hbox{$\times$}}
\def\diag{\operatorname {diag}}
\def\Ext{\operatorname {Ext}}
\def\GL{\operatorname {GL}}
\def\id{\operatorname {id}}
\def\Ker{\operatorname {ker}}
\def\Spec{\operatorname {Spec}}
\def\Aut{\operatorname{Aut}}
\def\det{\operatorname{det}}
\def\dim{\operatorname{dim}}
\def\Ext{\operatorname{Ext}}
\def\uExt{\operatorname{\underline{Ext}}}
\def\GL{\operatorname{GL}}
\def\gldim{\operatorname{gldim}}
\def\grmod{\operatorname{grmod}}
\def\Ker{\operatorname{Ker}}
\def\Proj{\operatorname{Proj}}
\def\rank{\operatorname{rank}}
\def\tails{\operatorname{tails}}
\def\tors{\operatorname{tors}}
\def\uExt{\operatorname{\underline{Ext}}}
\def\G{\mathop{\underline{\underline{\it \Gamma}}}\nolimits}
\def\l{\leftarrow}
\let\oldtext\text
\def\text#1{\oldtext{\normalshape #1}}
\def\e{\epsilon}
\def\l{\lambda}
\def\s{\sigma}
\def\t{\tau}
\def\G{\Gamma}
\def\cA{{\cal A}}
\def\cD{{\cal D}}
\def\cF{{\cal F}}
\def\cP{{\cal P}}
\def\cV{{\cal V}}
\def\<{\langle}
\def\>{\rangle}
\def\uCM{\underline {\operatorname
{CM}}}
\def\sgn{\operatorname{sgn}}
\def\sA{\mathscr A}
\def\sB{\mathscr B}
\def\sC{\mathscr C}
\def\sD{\mathscr D}
\def\Projn{\operatorname{Proj_{nc}}}
\newtheorem{lemma}{Lemma}[section]
\newtheorem{proposition}[lemma]{Proposition}
\newtheorem{theorem}[lemma]{Theorem}
\newtheorem{corollary}[lemma]{Corollary}
\theoremstyle{definition}
\newtheorem{example}[lemma]{Example}
\newtheorem{definition}[lemma]{\sl Definition}
\theoremstyle{remark}
\newtheorem{remark}[lemma]{Remark}
\begin{document}

\pagenumbering{arabic}

\title
{Clifford Quadratic Complete Intersections}

\author{Haigang Hu$^{1}$ and Izuru Mori$^{2}$}

\address{${^1}$ School of Mathematical Sciences, University of Science and Technology of China, Hefei Anhui 230026, CHINA}

\email{huhaigang@ustc.edu.cn}

\email{huhaigang\_phy@163.com \textnormal{(H. Hu)}}

\address{${^2}$ Department of Mathematics, Faculty of Science, Shizuoka University, Shizuoka 422-8529, JAPAN}

\email{mori.izuru@shizuoka.ac.jp \textnormal{(I. Mori)}}

\keywords{Clifford algebra, quantum quadratic complete intersection, point variety, characteristic variety}

\thanks {{\it 2020 Mathematics Subject Classification}: 16E65, 16S38, 16W50}

\thanks {The second author was supported by Grant-in-Aid for Scientific Research (C) 20K03510.}



\begin{abstract} 
In this paper, we define and study Clifford quadratic complete intersections.  After showing some properties of Clifford quantum polynomial algebras, we show that there is a natural one-to-one correspondence between Clifford quadratic complete intersections and commutative quadratic complete intersections.
As an application, we give a classification of Clifford quadratic complete intersections in three variables in terms of their characteristic varieties.    
\end{abstract}

\maketitle


\section{Introduction} 

Throughout this paper, we work over an algebraically closed field $k$ of characteristic 0.   The most basic object in algebraic geometry is a projective space, which is a projective scheme associated to a polynomial algebra.  In noncommutative algebraic geometry, a quantum polynomial algebra (Definition \ref{def.qpa}) is a noncommutative analogue of a polynomial algebra, so a quantum projective space, which is defined as a noncommutative projective scheme associated to a quantum polynomial algebra, is a basic object of study.  In fact, the classification of quantum polynomial algebras is one of the major projects in noncommutative algebraic geometry (see \cite{ATV}.)  Quadric hypersurfaces may be the second most basic objects in algebraic geometry. In fact, the classification of quadric hypersurfaces is elementary by Sylvester (see Lemma \ref{lem.sy11}). 
Studying and classifying noncommutative quadric hypersurfaces are active projects in noncommutative algebraic geometry (see \cite{HY,H,HMM,SmV}.)   As a first step, noncommutative conics were completely classified in \cite{HMM} recently.  

Motivated by this, in this paper, we study noncommutative analogues of quadratic complete intersections. 
It is sensitive to define what should be a noncommutative analogue of a complete intersection.  Several options were proposed in \cite{CV1,KKZ,V1}.  We adopt a rather naive definition in this paper, which is not exactly the same as the options provided in the literature (see Remark \ref{rem.cseq}).
Recall that a (commutative) quadratic complete intersection (of length $r$ in $n$ variables) is defined as 
$$
B(F) = k[u_1,\dots,u_n]/(f_1,\dots,f_r), 
$$
where $f_1, \dots, f_r \in k[x_1,\dots,x_n]_2$ is a regular sequence which is uniquely determined by the sequence
$$
F = (F_1, \dots, F_r) \in M_n(k)^{\times r}
$$
of symmetric matrices. When $r=1$, $B(F)$ is a quadric hypersurface.  When $r\geq 2$, the classification of $B(F)$ is classical but highly non-trivial (see \cite {HP,W}). 
One approach to study $B(F)$ is to study its quadratic dual $B(F)^!$. The main purposes of this paper are to give a detailed proof to show that $B(F)^!$ is a noncommutative quadratic complete intersection embedded into a Clifford quantum projective space (Theorem \ref{thm.sq}), and to study $B(F)^!$ by using geometric methods (Section 4).

For a sequence
$$F = (F_1, \dots, F_n) \in M_n(k)^{\times n}$$
of symmetric matrices, 
we may define the  graded Clifford algebra associated to $F$ by
\begin{align*}
C(F): = k\<x_1, \dots, x_n, y_1, \dots, y_n\>/  (x_ix_j+x_jx_i-\sum_{m=1}^n(F_m)_{ij}y_m, & \\
x_iy_j-y_jx_i, y_iy_j-y_j & y_i)_{1\leq i, j\leq n}
\end{align*}
where $\deg x_i=1, \deg y_j=2$.   A graded Clifford algebra has been an active area of study (see \cite{AL,CYZ,LeB,SV}).  This notion was even extended to a graded skew Clifford algebra and intensively studied in \cite{CV1,CV2,V1,V2}.
In noncommutative algebraic geometry, Clifford quantum polynomial  algebras play the role of the ambient spaces where noncommutative quadratic complete intersections are embedded into, so after preparing some preliminary results in Section 2, 
we first study Clifford quantum polynomial algebras in Section 3.  The following result is folklore (see \cite{AL,CV1,LeB}), which motivates many researchers to study Clifford quantum polynomial algebras.

 \begin{theorem} \label{thm.1} 
 For a sequence $F= (F_1, \dots, F_n)\in M_n(k)^{\times n}$ of linearly independent symmetric matrices,
 the following are equivalent:
\begin{enumerate}
\item{} $B(F)$ is a (commutative) quadratic complete intersection.
\item{} $C(F)$ is a 
Clifford quantum polynomial algebra.
\end{enumerate} 
 \end{theorem}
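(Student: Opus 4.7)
The plan is to leverage classical Clifford algebra theory over the central subring $R := k[y_1,\dots,y_n]$ generated by the $y_m$.

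First, I would run Bergman's diamond lemma on the defining relations of $C(F)$, ordering the generators so that each $x_i$ is above each $y_j$ and using the rewriting $x_ix_j \leadsto -x_jx_i + \sum_m (F_m)_{ij} y_m$ for $j \leq i$ (the diagonal case gives $x_i^2 \leadsto \tfrac12\sum_m (F_m)_{ii} y_m$). Checking that every overlap ambiguity resolves yields the PBW basis $\{x_1^{\epsilon_1}\cdots x_n^{\epsilon_n} y_1^{a_1}\cdots y_n^{a_n}: \epsilon_i \in \{0,1\},\, a_j\in \NN\}$ and hence the Hilbert series
\[
H_{C(F)}(t) = \frac{(1+t)^n}{(1-t^2)^n} = \frac{1}{(1-t)^n},
\]
matching that of a quantum polynomial algebra in $n$ variables. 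As a by-product, the relations force $R$ to embed centrally in $C(F)$ and present $C(F)$ as a free $R$-module of rank $2^n$ with basis $\{x_1^{\epsilon_1}\cdots x_n^{\epsilon_n}\}_{\epsilon\in\{0,1\}^n}$.

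Second, I would reinterpret $C(F)$ as a Clifford algebra over $R$. On $V := \bigoplus_i k x_i$ the defining relations read $uv + vu = B(u,v)$, with $B\colon V \times V \to R$ the symmetric bilinear form having matrix $\sum_m y_m F_m$ in the basis $\{x_i\}$. A direct computation gives, for $u = \sum u_i x_i \in V$,
\[
u^2 \;=\; \tfrac12 B(u,u) \;=\; \tfrac12 \sum_{m=1}^n f_m(u)\, y_m \;\in\; R,
\]
so $u^2 = 0$ in $C(F)$ if and only if $u$ is a common zero of $f_1, \dots, f_n$. Hence $B$ is anisotropic over $R$ precisely when the base locus $V(f_1,\dots,f_n) \subset \PP^{n-1}$ is empty, which is the geometric translation of ``$f_1,\dots,f_n$ form a regular sequence'', i.e.\ of $B(F)$ being a quadratic complete intersection.

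Third, I would close the equivalence. For $(2) \Rightarrow (1)$: a quantum polynomial algebra is a domain, so a nonzero $u \in V$ with $u^2 = 0$ is forbidden; by the previous paragraph this forces anisotropy, and hence $B(F)$ is a complete intersection. For $(1) \Rightarrow (2)$: assuming $B$ anisotropic over $R$, the Hilbert series is already correct, so it remains to verify AS-regularity. Here I would invoke the standard Koszul-style resolution of the trivial module over a graded Clifford algebra attached to an anisotropic pencil (as developed in \cite{AL,LeB,CV1}), exploiting that $C(F)$ is a finite free module of rank $2^n$ over the regular central ring $R$. This controls both the global dimension and the Gorenstein symmetry required of a quantum polynomial algebra.

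\emph{The main obstacle} is the implication $(1)\Rightarrow(2)$: upgrading ``domain with correct Hilbert series'' to full AS-regularity requires an explicit finite projective resolution of $k_{C(F)}$ with the right self-duality. This is the technical heart of graded Clifford algebra theory and is where the folklore references do the real work; the remaining steps (the PBW/Hilbert series bookkeeping and the isotropy $\Rightarrow$ non-domain argument) are routine.
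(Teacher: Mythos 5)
Your steps (1) and (2) are correct and essentially reproduce the paper's foundation: the PBW basis and the resulting Hilbert series $(1-t)^{-n}$, together with the freeness of $C(F)$ of rank $2^n$ over $R=k[y_1,\dots,y_n]$, are Lemma \ref{lem.HO} combined with Lemma \ref{lem.c1}, and your identity $u^2=\tfrac12\sum_m f_m(u)\,y_m$ (so that nonzero square-zero elements of $C(F)_1$ correspond exactly to base points of the quadric system) is the same computation that appears in the proof of Theorem \ref{thm.ctf}. Your handling of $(1)\Rightarrow(2)$ also parallels the paper's: the paper treats this direction as folklore and outsources it to \cite[Theorem 5]{S} and \cite[Proposition 7]{LeB}, so your deferral to \cite{AL,LeB,CV1} is acceptable in context, and you correctly identify this as the technical heart.

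The genuine gap is in your $(2)\Rightarrow(1)$, where you deduce anisotropy from ``a quantum polynomial algebra is a domain.'' Domain-ness is not part of Definition \ref{def.qpa}, and whether every AS-regular algebra (equivalently, every quantum polynomial algebra) is a domain is a well-known open problem for general $n$; it is a theorem only in low dimensions (e.g.\ $n\le 3$ by Artin--Tate--Van den Bergh). Moreover, for graded Clifford algebras the standard proofs of domain-ness (via Aubry--Lemaire, as quoted in the proof of Corollary \ref{cor.center}) proceed \emph{through} base-point-freeness, so invoking the domain property at this stage is circular. The paper's argument in Theorem \ref{thm.n0nn} avoids this entirely: by Lemma \ref{lem.qpaKos} a quantum polynomial algebra is Koszul, hence quadratic, so $C(F)\cong B(F)^!$ by Proposition \ref{prop.BSC}, and Lemma \ref{lem.Kos}(i) gives $H_{B(F)}(t)=1/H_{C(F)}(-t)=(1+t)^n$; thus $B(F)$ is finite dimensional, forcing $\cV(f_1,\dots,f_n)=\emptyset$ and hence, by Lemma \ref{lem.rHn}, that $f_1,\dots,f_n$ is a regular sequence. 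Replacing your ``domain'' step with this Koszul-duality and Hilbert-series argument closes the gap; the rest of your outline then goes through.
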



In Section 3.2, we show that every Clifford quantum polynomial algebra can be realized as a derivation quotient algebra of a symmetric superpotential (Proposition \ref{prop.symd}), 
which leads the following result. 

 \begin{theorem}
 {\rm (Corollary \ref{cor.symd})} An $n$-dimensional Clifford quantum polynomial algebra is Calabi-Yau if and only if $n$ is odd. 
 \end{theorem}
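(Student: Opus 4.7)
The plan is to deduce this corollary directly from Proposition \ref{prop.symd} by reading off the Nakayama automorphism of the associated superpotential algebra. In one direction, any derivation quotient algebra $\mathcal{D}(w)$ of a graded superpotential is twisted Calabi-Yau, with Nakayama automorphism determined by the action of cyclic rotation on $w$ (this is the Bocklandt--Dubois-Violette--Van den Bergh dictionary). So it suffices to compute this action for the particular symmetric superpotential furnished by Proposition \ref{prop.symd}.

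First I would invoke Proposition \ref{prop.symd} to realize the $n$-dimensional Clifford quantum polynomial algebra $A$ as $\mathcal{D}(w)$ for a symmetric superpotential $w$. The salient feature, distinguishing \emph{symmetric} from merely \emph{preregular}, is that $w$ is genuinely cyclically invariant under the rotation $\phi$, not merely invariant up to the customary sign $(-1)^{n-1}$ that appears in the general preregularity condition.

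Next I would compare this genuine symmetry against the sign convention used to identify $w$ as a superpotential of an AS-regular algebra of global dimension $n$. The discrepancy between the two forces the Nakayama automorphism $\nu$ to act on the degree-1 generators $x_i$ by the scalar $(-1)^{n-1}$, while leaving the central degree-2 generators $y_j$ fixed. Since an AS-regular algebra is Calabi-Yau precisely when $\nu = \mathrm{id}$, the claimed equivalence
\[
A \text{ is Calabi-Yau} \iff (-1)^{n-1} = 1 \iff n \text{ is odd}
\]
follows immediately.

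The main obstacle I foresee is setting up the superpotential/Nakayama dictionary carefully in this mildly non-Koszul context: $C(F)$ has generators in degrees 1 and 2, and eliminating the central $y_j$ introduces relations of degrees up to four, so one cannot simply cite the standard quadratic Koszul case. The right framework is to work with the mixed-degree symmetric superpotential presentation produced by Proposition \ref{prop.symd}, where the argument reduces to the parity bookkeeping above.
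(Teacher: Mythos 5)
Your proposal follows essentially the same route as the paper: Proposition \ref{prop.symd} produces a genuinely cyclically invariant $w\in{\rm Sym}^n(V)$ with $S=\cD(w,n-2)$, and the paper then simply cites \cite[Theorems 6.3, 6.4]{MS} for exactly the sign bookkeeping you describe (the Nakayama automorphism is $(-1)^{n-1}\id$ on $V$, hence Calabi--Yau iff $n$ is odd). The obstacle you foresee about a non-Koszul, mixed-degree setting does not arise: a quantum polynomial algebra is Koszul and generated in degree $1$, and the presentation furnished by Proposition \ref{prop.symd} is already the purely quadratic one $T(V)/(\partial^{n-2}(kw))$ with all generators in degree $1$, so the standard quadratic superpotential calculus applies directly.
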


We say that a graded algebra $A$ is a {\it Clifford (resp. Calabi-Yau) quadratic complete intersection of length $r$ in $n$ variables} if $A = S/(f_1, \dots, f_r)$ where $S$ is an $n$-dimensional Clifford (resp. Calabi-Yau) quantum polynomial algebra and $f_1, \dots, f_r \in Z(S)_2$ is a regular sequence. 
In particular, when $r=1$, we call $S/(f)$ a noncommutative quadric hypersurface, which is a main object of our project.  On the other hand, the case $r=n$ was intensively studied in \cite{CV1,V1}.
One of the motivations of this project is to study and classify noncommutative quadric hypersurfaces in Calabi-Yau quantum projective spaces. 
By the above theorem, Clifford quadratic complete intersections can be embedded into Calabi-Yau quantum projective spaces when $n$ is odd, so it is interesting to study and classify them.
The following theorem may be known by experts,  but, to our knowledge, it has not appeared in the literature except for the case $r=0, n$. Since it is useful in the classification, we will give a detailed proof in Section 3.4.

\begin{theorem}[Theorem \ref{thm.sq}]
There is a one-to-one correspondence 
$$
\begin{CD}
\left\{ \begin{tabular}{c} \text{Clifford quadratic }\\ \text{complete intersections } \\ \text{ of length $r$ } \\ \text{in $n$ variables} \end{tabular} \right\} / \cong 
& \;\; \xleftrightarrow{1:1} \;\; &
\left\{  \begin{tabular}{c} \text{commutative quadratic }\\  \text{complete intersections } \\ \text{of length $n-r$ } \\ \text{in $n$ variables} \end{tabular}  \right\}/ \cong
\end{CD}
$$
for all $n$ and $0 \leq r \leq n$ by taking quadratic dual.
\end{theorem}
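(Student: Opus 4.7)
The plan is to establish the correspondence via the quadratic duality functor $A\mapsto A^!$. Because quadratic duality is an involution on the class of quadratic algebras (since $(R^\perp)^\perp = R$ in a finite-dimensional tensor square), a bijection will follow once it is checked that both directions send the right class to the right class; that is, that $B^!$ is a Clifford q.c.i.\ of length $r$ whenever $B$ is a commutative q.c.i.\ of length $n-r$, and symmetrically.

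For the direction from commutative to Clifford, I would let $B = B(F) = k[u_1,\dots,u_n]/(f_1,\dots,f_{n-r})$ with $F = (F_1,\dots,F_{n-r})$ linearly independent symmetric matrices, put $V = \bigoplus_i k u_i$, and compute $B^! = T(V^*)/R^\perp$ directly. In characteristic zero one has $V\otimes V = S^2V\oplus \Lambda^2V$, and
\[
R = \Lambda^2 V \oplus \mathrm{span}(f_1,\dots,f_{n-r}),
\]
so the annihilator $R^\perp$ sits inside $S^2V^*$ and is spanned by the symmetric tensors $\sum a_{ij}(x_ix_j+x_jx_i)$ such that the symmetric matrix $A=(a_{ij})$ is orthogonal (under the trace pairing) to each $F_m$. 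Next I would extend $F$ to $F'' = (F_1,\dots,F_n)$ so that $B(F'')$ remains a complete intersection of length $n$; such a prolongation exists because a length-$(n-r)$ regular sequence of quadrics in an $n$-dimensional polynomial ring can always be completed to a maximal one by generic choices. By Theorem \ref{thm.1}, $S := C(F'')$ is then a Clifford quantum polynomial algebra, in which $x_ix_j + x_jx_i = \sum_m (F_m)_{ij} y_m$. Passing to $S/(y_{n-r+1},\dots,y_n)$ and viewing it as a quadratic algebra in $x_1,\dots,x_n$, the relation $\sum a_{ij}(x_ix_j+x_jx_i) = 0$ becomes $\sum_{m=1}^{n-r}\mathrm{tr}(AF_m)\, y_m = 0$, which forces precisely the conditions $\mathrm{tr}(AF_m)=0$ for $m\le n-r$. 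Hence $B^! \cong S/(y_{n-r+1},\dots,y_n)$ as quadratic algebras. That $y_{n-r+1},\dots,y_n\in Z(S)_2$ is a regular sequence follows from the standard structural fact that a Clifford QPA is a free module of rank $2^n$ over the central polynomial subalgebra $k[y_1,\dots,y_n]$, with basis the squarefree monomials in $x_1,\dots,x_n$; this makes $B^!$ a Clifford q.c.i.\ of length $r$ in $n$ variables.

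For the reverse direction I would take $A = S/(g_1,\dots,g_r)$ with $S=C(F'')$ a Clifford QPA and $g_1,\dots,g_r\in Z(S)_2$ a regular sequence. Using $Z(S)_2 = \mathrm{span}(y_1,\dots,y_n)$, one can perform a linear change of basis in the Clifford construction to arrange $g_i = y_{n-r+i}$, at the cost of replacing $F''$ by another completion of the same initial $(F_1,\dots,F_{n-r})$. The computation of the previous paragraph, run in reverse, then produces $A^! \cong B(F_1,\dots,F_{n-r})$, a commutative q.c.i.\ of length $n-r$ (the sequence $f_1,\dots,f_{n-r}$ is regular because $f_1,\dots,f_n$ is). Combined with $(A^!)^!\cong A$, this will establish the claimed bijection. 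The most delicate step, and where I expect the main technical effort to lie, is the identification $B^! \cong S/(y_{n-r+1},\dots,y_n)$: one must show that this quotient, regarded as a quadratic algebra in $x_1,\dots,x_n$, acquires no extra degree-$2$ relations beyond those in $R^\perp$, which boils down to the linear independence of the images of $y_1,\dots,y_{n-r}$ in $S/(y_{n-r+1},\dots,y_n)$ and ultimately rests on the free-module decomposition of $S$ over $k[y_1,\dots,y_n]$.
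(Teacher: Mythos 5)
Your proposal is correct in outline, but it takes a genuinely different route from the paper. The paper never computes $R^{\perp}$ explicitly: it reduces to the two endpoint cases $r=0$ (Theorem \ref{thm.n0nn}) and $r=n$ (Corollary \ref{cor.sq}) and then transports central regular sequences across the duality one element at a time via Lemma \ref{lem.f1r}, an inductive application of the Shelton--Tingey/He--Ye fact that $A^!=(A/(f))^!/(f^!)$ for a central regular $f\in Z(A)_2$ when $A$ is Koszul. You instead identify $B(F_1,\dots,F_{n-r})^!$ directly with $C(F'')/(y_{n-r+1},\dots,y_n)$ by computing $R^{\perp}$ inside the symmetric tensors as $\{A=(a_{ij})\ \text{symmetric}\mid \mathrm{tr}(AF_m)=0,\ m\le n-r\}$, and you obtain regularity of $y_{n-r+1},\dots,y_n$ from the rank-$2^n$ freeness of $C(F'')$ over $k[y_1,\dots,y_n]$ (Lemma \ref{lem.HO} via Lemma \ref{lem.c1}) rather than from the Hilbert-series bookkeeping of Proposition \ref{prop.sq}. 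Both arguments ultimately rest on the same two nontrivial inputs: that a regular sequence of quadrics in $k[u_1,\dots,u_n]$ extends to a maximal one, and that $Z(S)_2=\sum_{m}ky_m$ for $S\in\sC_{n,0}$ (Corollary \ref{cor.center}), which is exactly what lets you rotate an arbitrary central regular sequence into $(y_{n-r+1},\dots,y_n)$ in the reverse direction. What your route buys is an explicit matrix description of $B(F)^!$ and of the dual sequence; what the paper's route buys is independence from the Clifford normal form, since Lemma \ref{lem.f1r} holds for an arbitrary Koszul algebra.

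Two details you should nail down to make this airtight. First, you must normalize $F''$ (Lemma \ref{lem.clP2}, Corollary \ref{cor.nor}) so that $y_m=x_m^2$ in $C(F'')$; otherwise $S/(y_{n-r+1},\dots,y_n)$ need not be generated in degree $1$ by the $x_i$ and ``viewing it as a quadratic algebra in $x_1,\dots,x_n$'' is not available. Second, matching the degree-$2$ relation spaces only gives a surjection $B^!\to S/(y_{n-r+1},\dots,y_n)$; to conclude it is an isomorphism, either observe that the quotient is itself quadratic with relation space $R_S+\sum_{m>n-r}k\,x_m\otimes x_m$ of the same dimension $\binom{n+1}{2}-(n-r)$ as $R^{\perp}$, or compare Hilbert series using the Koszulity of $B$ (Remark \ref{rem.BCKos}). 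Both fixes are routine, so I regard these as gaps in exposition rather than in substance.
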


The key to prove the above theorem is to 
compute the center $Z(S)$ explicitly for a Clifford quantum polynomial algebra $S$, which is done in Section 3.3.
In fact, it is rather surprising that every sequence of linearly independent elements $f_1, \dots, f_r\in Z(S)_2$ forms a regular sequence (Proposition \ref{prop.sq}).  The idea of the above theorem was used in classifying noncommutative conics in \cite{HMM}.  Although it is not easy to classify Clifford quantum polynomial algebras, the above theorem tells that it may be possible to classify Clifford quadratic complete intersections with relations.   In fact, there are infinitely many Clifford quantum projective planes up to isomorphism, on the other hand, there are only 6 noncommutative conics embedded into Clifford quantum projective planes up to isomorphism  (see Corollary \ref{cor.123})!

One of the basic ideas of noncommutative algebraic geometry is to use geometry to study noncommutative algebras.  
In Section 4, we provide a geometric method to study Clifford quadratic complete intersections.
By \cite{ATV}, every 3-dimensional quantum polynomial algebra $S$ determines and is determined by a geometric pair $(E, \s)$ where $E$ is a projective scheme and $\s\in \Aut E$.  We say that a quadratic algebra $A$ satisfies condition (G1) if it determines a geometric pair $\cP(A)=(E, \s)$ (when $E$ is a projective variety).  In this case, we call $E$ the {\it point variety} of $A$ (Definition \ref{def.geoalg}).

Every noncommutative conic $A$ in a Calabi-Yau quantum projective plane satisfies the condition (G1) (\cite [Proposition 4.5]{HMM}), and the point variety $E_A$ of $A$ 
was useful to classify noncommutative conics (\cite[Theorem 4.14]{HMM}). We generalize the calculation method of $E_A$ 
for Clifford quadratic complete intersections $A$:

\begin{theorem} Let $A = S/(f_1, \dots, f_r)$ be a Clifford quadratic complete intersection where $f_1, \dots, f_r \in Z(S)_2$ is a central regular sequence.
\begin{enumerate}
\item{}  {\rm(Theorem \ref{thm.0G1})} $S$ satisfies the condition (G1).
\item{} {\rm(Theorem \ref{thm.cen})} There are $g_1, \dots, g_r \in S_1$ such that $g_i^2 = f_i$ for $1\leq i\leq r$. (Every central element in $Z(S)_2$ is reducible!)
\item{} {\rm(Theorem \ref{thm.SsA})} $A$ satisfies the condition (G1).  Moreover, if $\mathcal{P}(S) = (E,\sigma)$,  then $\mathcal{P}(A) = (E_A, \sigma_A)$ is given by 
$$
E_A=(E\cap \cV(g_1, \dots, g_r))\cup \s(E\cap \cV(g_1, \dots, g_r)), \; \s_A=\s|_{E_A}.
$$
\end{enumerate}
\end{theorem}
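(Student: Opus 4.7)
I would prove the three parts in sequence, each building on the previous.

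For Part~(1), I would present $S = C(F)$ as a quadratic algebra on $V = S_1 = \mathrm{span}(x_1,\ldots,x_n)$: the linear independence of $F_1,\ldots,F_n$ (Theorem~\ref{thm.1}) lets us solve $\sum_m (F_m)_{ij} y_m = x_i x_j + x_j x_i$ for the $y_m$ in terms of symmetric products of the $x$'s, eliminating the degree-$2$ generators and yielding $\binom{n}{2}$ quadratic relations in $V$ alone. Standard multilinearization then gives an incidence scheme $\Gamma \subset \PP(V^*)\times\PP(V^*)$ whose two projections share an image $E \subset \PP(V^*)$; using that $S$ is Koszul AS-regular with the expected Hilbert series, together with a direct analysis of these bilinear forms, one concludes that $E$ is a variety and $\Gamma$ is the graph of an automorphism $\sigma\colon E \to E$, so $\cP(S) = (E, \sigma)$ exists.

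For Part~(2), I would invoke the description $Z(S)_2 = \mathrm{span}(y_1,\ldots,y_n)$ from Proposition~\ref{prop.sq} to write an arbitrary $f \in Z(S)_2$ as $f = \sum c_m y_m$. For $g = \sum a_i x_i \in S_1$, the Clifford relations give directly
\[ g^2 = \tfrac{1}{2}\sum_m (a^{T} F_m a)\, y_m, \]
so $g^2 = f$ amounts to the system $a^{T} F_m a = 2c_m$ for every $m$, i.e.\ $\Phi(a) = 2c$ for the morphism $\Phi\colon \AA^n \to \AA^n$, $a \mapsto (f_1(a),\ldots,f_n(a))$ whose components are the defining quadrics of $B(F)$. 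Theorem~\ref{thm.1} makes $B(F)$ a complete intersection, so $\Phi^{-1}(0) = \Spec B(F)$ is zero-dimensional; combined with homogeneity this makes $\Phi$ finite and flat (miracle flatness), hence surjective over the algebraically closed $k$. Applying this to each $f_i$ in the regular sequence produces the desired $g_i$.

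For Part~(3), I would extend the multilinearization of Part~(1) to $A = S/(f_1,\ldots,f_r)$: each new relation $f_i = g_i^2$ lifts to $g_i\otimes g_i \in V\otimes V$, and centrality of $f_i$ ensures that $f_i$-annihilation at the degree-$2$ step of a point module propagates to every higher degree. A pair $(p,q) \in \Gamma$ then corresponds to a point module of $A$ iff $g_i(p)g_i(q) = 0$ for all $i$, and using $q = \sigma(p)$ on $\Gamma$ and projecting to the first factor yields $E_A = \{p \in E : g_i(p)g_i(\sigma(p)) = 0 \text{ for every } i\}$. To identify this with $(E\cap\cV(g_1,\ldots,g_r)) \cup \sigma(E\cap\cV(g_1,\ldots,g_r))$ I would show that the ``mixed'' loci (different $g_i$'s vanishing at $p$ versus at $\sigma(p)$) collapse into the pure ones and that the sign convention $\sigma$ vs.\ $\sigma^{-1}$ matches; both follow from the $\sigma^2$-invariance of each $E\cap\cV(g_i)$, which in turn reflects the centrality of $g_i^2$. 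The identity $\sigma_A = \sigma|_{E_A}$ is then immediate from the $\sigma$-invariance of the right-hand side. The main obstacle I anticipate is precisely this last step of Part~(3): translating the local, multilinear condition $g_i(p)g_i(\sigma(p)) = 0$ into the clean global union asserted, which requires exploiting the Clifford structure to rule out or absorb the mixed components.
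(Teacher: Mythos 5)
Your Part~(2) is sound and is essentially the paper's own argument in geometric clothing: the paper shows $\widetilde K_f\cong \Spec S^!(\theta_f)$, which is precisely your fibre $\Phi^{-1}(2c)$ of the morphism defined by the quadrics of $B(F)$, and nonemptiness comes from $B(F)$ being a base-point-free complete intersection, so that the morphism is finite and surjective. (Minor point: the description $Z(S)_2=\sum_m ky_m$ is Corollary~\ref{cor.center}, not Proposition~\ref{prop.sq}; it is the nontrivial input from the center computation of Section~3.3.) The gaps are in Parts~(1) and~(3). In Part~(1) you assert that ``a direct analysis of these bilinear forms'' shows $\cV(R)$ is the graph of an automorphism, but you never identify the mechanism, and for $n\geq 4$ a quantum polynomial algebra need not satisfy (G1), so something specific to the Clifford case is indispensable. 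The paper's mechanism is: for $(p,q)\in\cV(R)$ one has $g_pg_q\in\sum_m kf_m$ in $k[u_1,\dots,u_n]$, where $f_1,\dots,f_n$ are the defining quadrics of $B(F)=S^!$; if $\pi_1$ were not injective, two linearly independent quadrics in $\sum_m kf_m$ would share the common linear factor $g_p$, and completing them to a basis of $\sum_m kf_m$ would yield a non-regular sequence whose quotient nonetheless has the Hilbert series $(1-t^2)^n/(1-t)^n$, a contradiction via Lemma~\ref{lem.rHn} and unique factorization in $k[u_1,\dots,u_n]$ (Theorems~\ref{thm.rXF} and~\ref{thm.0G1}). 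None of this appears in your sketch.

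In Part~(3) your reduction $E_A=\{p\in E\mid g_i(p)g_i(\sigma(p))=0\ \forall i\}$ is correct and already yields (G1), since this set is $\sigma$-stable because $\sigma^2=\id$. But the step you yourself flag as the main obstacle --- collapsing the mixed loci --- does not follow from the $\sigma^2$-invariance of each $E\cap\cV(g_i)$, and in fact it fails for arbitrary square roots: in $S^{(0,0,0)}$ take $g_1=x-y$ and $g_2=x+y+z$, so that $g_1^2=x^2+y^2$ and $g_2^2=x^2+y^2+z^2$ form a central regular sequence; the point $p=(1,1,0)\in E$ with $\sigma(p)=(1,-1,0)$ satisfies $g_1(p)=g_2(\sigma(p))=0$ while $g_1(\sigma(p))$ and $g_2(p)$ are nonzero, so $p$ lies in $\cV(R_A)$'s first projection even though $E\cap\cV(g_1,g_2)=\emptyset$. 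Hence the route you propose (absorbing mixed components for an arbitrary choice of the $g_i$) cannot be completed as stated; the two-term union is sensitive to the choice of the $g_i$. The paper instead argues by induction on $r$, peeling off one $g_i^2$ at a time via \cite[Lemma 4.4]{HMM}, where for a single $g$ the identity $\{p\mid g(p)g(\sigma(p))=0\}=(E\cap\cV(g))\cup\sigma(E\cap\cV(g))$ is immediate from $\sigma^{-1}=\sigma$; note that even there, rewriting the inductive formula as the closed-form union requires exactly the absorption of mixed terms that the example above shows is delicate, so this point deserves an explicit argument rather than an appeal to $\sigma^2=\id$.
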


For a Clifford quadratic complete intersection $A=C(F)/(g_1^2, \dots, g_r^2)$ where $F$ is normalized (Definition \ref{def.norm}) and $g_1, \dots , g_r\in C(F)_1$, there is an interesting result connecting the point variety $E_A$ and the {\it characteristic variety} of $A$ defined as follows: 
Define 
$$
X^{(s)}(F)  : =\{(\l_1, \dots, \l_n)\in \PP^{n-1}\mid \rank (\l_1F_1+\cdots +\l_nF_n)< s\}.
$$
We call 
$$
X_{A}^{(s)}: = X^{(s)}(F)  \cap \mathcal{V}(\tilde g_1, \dots, \tilde g_r)
$$
the characteristic varieties of $A$ (see Definition \ref{def.chva}).
Then there is a double cover map of varieties
$$
\Phi: E \to X^{(3)}(F)
$$
which restricts to a double cover map of varieties
\begin{equation*} 
\Phi|_{E_{A}}: E_{A} \to X_{A}^{(3)}
\end{equation*}
(Theorem \ref{thm.0G1}, Theorem \ref{thm.XA}).

As an application of the results mentioned above,
we give complete classifications of the point varieties $E_A$ and the characteristic varieties $X_{A}^{(3)}$ for Clifford quadratic complete intersections of length  $ r =1,2,3$ in $3$ variables (Table \ref{tab.pa}). \\

\noindent {\bf Acknowledgement.} We would like to thank P. Belmans and M. Vancliff for the helpful information. 

\section{Preliminaries} 

\subsection{Definitions and notations}

All algebras and vector spaces are over $k$ unless otherwise mentioned. A connected graded algebra $A$ is a positively graded algebra $A = \bigoplus_{i \geq 0} A_i$ such that $A_0 = k$. 
Let $A$ be a right noetherian connected graded algebra.  
We denote by   $\grmod A$ the category of finitely generated graded right $A$-modules.  A morphism in $\grmod A$ is a right $A$-module homomorphism preserving degrees. For $M \in \grmod A$ and $j\in \ZZ$, we define the shift $M(j) \in \grmod A$ by $M(j)_i : = M_{j+i}$ for $i \in \mathbb{Z}$.  We define the Hilbert series of $M$ by  $H_M(t) : = \sum_{i \in \mathbb{Z}} (\dim_{k} M_i) t^i \in \mathbb{Z}[[t,t^{-1}]]$.  For $M, N \in \grmod A$, we write $\Ext^i_A(M, N):=\Ext^{i}_{\grmod A}(M,N)$
and 
$$
\uExt^i_A (M,N): = \bigoplus_{j \in \mathbb{Z}} \Ext^i_A (M, N(j)).
$$

A quantum polynomial algebra defined below is a noncommutative analogue of a commutative polynomial algebra in noncommutative algebraic geometry.

\begin{definition} \label{def.qpa}
A noetherian connected graded algebra $S$ is called an $n$-dimensional quantum polynomial algebra if 
\begin{enumerate}
\item{} $\gldim S=n$, 
\item{} $\uExt^i_S(k, S)=\begin{cases}  k(n) & \textnormal {if } i=n, \\
0 & \textnormal{otherwise}, \end{cases}$ and
\item{} $H_S(t)=(1-t)^{-n}$.
\end{enumerate} 
\end{definition}

\begin{remark}{\rm
By {\cite[Theorem 5.11]{Sm}}, every quantum polynomial algebra is a {\it Koszul} algebra, in particular, a quadratic algebra (for the definition of Koszul, see Section \ref{subsec.qud}). 
}
\end{remark}

Let $S$ be an $n$-dimensional quantum polynomial algebra. The canonical module of $S$ is defined by 
$$
\omega_S : = \lim_{i \to \infty} \uExt^n_S(S/S_{\geq i}, S),
$$ 
which has a natural graded $S$-$S$-bimodule structure.

\begin{definition} \label{def.cya}
An $n$-dimensional quantum polynomial algebra $S$ is called an {\it $n$-dimensional Calabi-Yau quantum polynomial algebra} if $\omega_S\cong S(-n)$ as graded $S$-$S$ bimodules.
\end{definition}

We restate a definition of  a graded Clifford algebra. 

\begin{definition} \label{def.gcliffod}
For a sequence  $F=(F_1, \dots,  F_n)$ of linearly independent symmetric matrices of size $n$,  we define  the graded algebra by 
\begin{align*}
C(F): = k\<x_1, \dots, x_n, y_1, \dots, y_n\>/  (x_ix_j+x_jx_i-\sum_{m=1}^n(F_m)_{ij}y_m, & \\
x_iy_j-y_jx_i, y_iy_j-y_j & y_i)_{1\leq i, j\leq n}
\end{align*}
where $\deg x_i=1, \deg y_j=2$.   
We call $C(F)$ a {\it graded Clifford algebra}.  If it is also an $n$-dimensional quantum polynomial algebra, then we call $C(F)$ an {\it $n$-dimensional Clifford quantum polynomial algebra.}  
\end{definition}

\begin{remark}
The range of subscripts is important and sensitive in this paper. 
Since $F_m$ are symmetric for $1 \leq m \leq n$, 
we may write
\begin{align*}
C(F): = k\<x_1, \dots, x_n, y_1, \dots, y_n\>/  (x_ix_j+x_jx_i-\sum_{m=1}^n(F_m)_{ij}y_m,& \\
x_iy_j-y_jx_i, y_iy_j-y_j  & y_i)_{1\leq i\leq j\leq n}.
\end{align*}
\end{remark} 

By \cite[Lemma 8.2]{ATV}, every graded Clifford algebra $C(F)$ is noetherian (see \cite[Proposition 2.3 (a)]{V1}).  
For every $u,v \in C(F)_1$, $uv+vu\in C(F)_2$ is a linear combination of central elements $y_1, \dots, y_n$, so $uv+vu$ is a central element in $C(F)$ (though possibly equals to $0$). 
We will explicitly compute the center $Z(C(F))$ in Section \ref{center}.  

We propose the following definition for a  noncommutative analogue of the quadratic complete intersection.

\begin{definition} \label{def.qqci}
We say that $A$ is a {\it  noncommutative 
 (Calabi-Yau, Clifford) quadratic complete intersection of length $r$ in $n$ variables} if $A=S/(f_1, \dots, f_r)$ for some $n$-dimensional (Calabi-Yau, Clifford) quantum polynomial algebra $S$
and some central regular sequence $f_1, \dots, f_r\in Z(S)_2$.   We denote by 
\begin{enumerate}
\item[(1)] $\sA_{n, r}$ the set of isomorphism classes of  Calabi-Yau quadratic complete intersections of length $r$ in $n$ variables. 
\item[(2)] $\sB_{n, r}$ the set of isomorphism classes of commutative quadratic complete intersections of length $r$ in $n$ variables.
\item[(3)] $\sC_{n, r}$ the set of isomorphism classes of Clifford quadratic complete intersections of length $r$ in $n$ variables.
\end{enumerate}
\end{definition}

By definition, $\sA_{n, 0}$, (resp. $\sB_{n, 0}$ or $\sC_{n, 0}$) is the set of isomorphism classes of $n$-dimensional Calabi-Yau (resp. commutative or Clifford) quantum polynomial algebras. 

\begin{definition} \label{def.geoalg}
We say that a quadratic algebra $A = T(V)/(R)$ satisfies the condition (G1) if there is a pair $(E,\sigma)$ where $E \subset \mathbb{P}(V^*)$ is a projective variety and $\sigma\in \Aut E$ is a $k$-automorphism of $E$ such that
$$
{\mathcal{V}}(R) = \{ (p ,\sigma(p)) \in \mathbb{P}(V^*) \times \mathbb{P}(V^*) \mid p \in E \}.
$$ 
If $A$ satisfies $(G1)$,  then we write $\mathcal{P}(A) = (E, \sigma)$.  We call $(E,\sigma)$ the {\it geometric pair} of $A$, and $E$ the {\it point variety} of $A$. 
\end{definition}

By Artin, Tate and Van den Bergh \cite{ATV}, every $3$-dimensional quantum polynomial algebra satisfies the condition (G1).

\begin{example}\label{exm.3-dimgcliff}
By {\cite[Corollary 4.8]{SV}}, $S\in \sC_{3, 0}$ is a $3$-dimensional Clifford quantum polynomial algebra if and only if  $S$ is isomorphic to 
$$
S^{(a,b,c)}:=k\langle x,y,z \rangle/(yz + zy + ax^2, zx + xz + b y^2, xy + yx + cz^2)
$$
where 
$$
(a,b,c) \in \{(0,0,0), (1,0,0), (1,1,0), (\lambda, \lambda, \lambda) \mid \lambda \in k, \lambda^3 \neq 0,1,-8 \}.
$$
For 
each allowed choice of $(a,b,c)$ as above, the geometric pair $\mathcal{P}(S) = (E, \sigma)$ of $S$ is given in Table \ref{3dimcliqpa} of Section 5 (\cite[Table 1]{HMM}).

Algebras $$
S^{(\l,\l,\l)}=k\langle x,y,z \rangle/(yz + zy + \l x^2, zx + xz + \l y^2, xy + yx + \l z^2)
$$
of Type EC listed in Table \ref{3dimcliqpa} of Section 5 are special cases of 3-dimensional Sklyanin algebras.  Since there are infinitely many isomorphism classes of elliptic curves, we see that $\#(\sC_{3, 0})=\infty$ (see Theorem \ref{thm.gepa}).
\end{example}

\subsection{Clifford algebras} 

Recall the definition of a (classical) Clifford algebra over a commutative ring.

\begin{definition} 
For a commutative ring $R$ and a symmetric matrix $\cF\in M_n(R)$, we define an algebra by 
$$C_R(\cF):=R\<x_1, \dots, x_n\>/(x_ix_j+x_jx_i-\cF_{ij})_{1\leq i, j\leq n}.$$
We call $C_R(\cF)$ a {\it Clifford algebra} over $R$.  
\end{definition} 

We list some results on Clifford algebras. 

\begin{lemma}[{\cite[Theorem 7.1.6]{HO}}] \label{lem.HO}
$C_R(\cF)$ is free as an $R$-module with a basis
$$
\{ 1, x_{i_1} \cdots x_{i_t} \mid 1 \leq t \leq n, i_1 < \dots < i_t\}.  
$$ 
In particular, $\rank_R C_R(\cF) = 2^n$. 
\end{lemma}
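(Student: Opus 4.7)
The plan is to establish both spanning and $R$-linear independence of the proposed $2^n$-element set $B := \{1\} \cup \{x_{i_1}\cdots x_{i_t} \mid 1 \leq t \leq n,\; i_1 < \cdots < i_t\}$; since the paper works in characteristic zero I freely use that $2$ is invertible in $R$. Spanning is routine: the defining relations can be rewritten in the normal form $x_j x_i = -x_i x_j + \cF_{ij}$ for $i<j$ and $x_i^2 = \tfrac12\cF_{ii}$, so starting from an arbitrary monomial $x_{i_1}\cdots x_{i_t}$ every out-of-order adjacent pair can be swapped at the cost of producing a strictly shorter monomial. A double induction on length and number of inversions then expresses every element of $C_R(\cF)$ as an $R$-linear combination of elements of $B$, giving $\rank_R C_R(\cF) \leq 2^n$.

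The substantive step, and the main obstacle, is $R$-linear independence. My plan is to invoke Bergman's Diamond Lemma on the rewriting system
\[
x_j x_i \;\rightsquigarrow\; -x_i x_j + \cF_{ij}\quad(i<j), \qquad x_i^2 \;\rightsquigarrow\; \tfrac12\cF_{ii},
\]
with monomials ordered by length-then-lexicographic degree; each reduction strictly decreases this order and the irreducible monomials are precisely the elements of $B$. The only overlap ambiguities to check are triples $x_k x_j x_i$ with $i<j<k$, together with the square-type overlaps $x_j x_i^2$ and $x_j^2 x_i$, and for each one verifies directly, using only the symmetry of $\cF$, that the two competing reduction paths converge to the same irreducible remainder. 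The Diamond Lemma then certifies that $B$ is an $R$-basis, so $\rank_R C_R(\cF) = 2^n$.

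As a conceptual cross-check I would construct a faithful representation of $C_R(\cF)$ on the exterior algebra $E := \Lambda_R(R^n)$ by setting $X_i(\omega) := e_i \wedge \omega + D_i(\omega)$, where $D_i$ is the contraction against the $i$th row of $\cF$. The identity $X_i X_j + X_j X_i = \cF_{ij}\cdot \id_E$, a short verification on wedge monomials, extends $x_i \mapsto X_i$ to an algebra map, and since $X_{i_1}\cdots X_{i_t}(1)$ equals $e_{i_1}\wedge\cdots\wedge e_{i_t}$ plus strictly lower-degree wedges, the images of $B$ are $R$-linearly independent. Either route yields the claim.
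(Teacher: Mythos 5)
Your proposal is correct, and it necessarily differs from the paper because the paper offers no proof at all for this statement: it is quoted verbatim from Hahn--O'Meara \cite[Theorem 7.1.6]{HO}. Your Diamond Lemma argument is the standard self-contained route: the rewriting system $x_jx_i \rightsquigarrow -x_ix_j + \cF_{ij}$ ($i<j$), $x_i^2 \rightsquigarrow \tfrac12\cF_{ii}$ is compatible with the degree-lexicographic order, its irreducible words are exactly the strictly increasing monomials, and the overlaps you list do resolve (I checked $x_kx_jx_i$ with $i<j<k$ and both square-type overlaps; you should also record the overlap $x_i^3$ of the square rule with itself, though it resolves trivially to $\tfrac12\cF_{ii}x_i$ on both paths). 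Two small points to tidy. First, the lemma as stated is for an arbitrary commutative ring $R$, and the presentation $x_ix_j+x_jx_i=\cF_{ij}$ with $i=j$ forces $x_i^2=\tfrac12\cF_{ii}$, so invertibility of $2$ is genuinely needed for your normal form; this is harmless here since every $R$ the paper uses is a $k$-algebra with $\operatorname{char}k=0$, but it is worth flagging that the general-ring version in \cite{HO} is phrased for quadratic forms precisely to avoid this. Second, in your exterior-algebra cross-check the contraction must be taken against \emph{half} the $i$th row of $\cF$: with $D_i=\iota_{\phi_i}$ and $\phi_i(e_j)=\cF_{ij}$ the standard identities give $X_iX_j+X_jX_i=2\cF_{ij}\cdot\id_E$, so rescale $\phi_i$ by $\tfrac12$. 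With that adjustment the representation argument also goes through and independently yields linear independence of the proposed basis. Either route is a complete proof of the lemma; what your write-up buys over the paper's citation is a verification that works directly from the paper's own (anti-commutator, not quadratic-form) presentation of $C_R(\cF)$.
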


\begin{lemma} \label{lem.FF'} 
Let $R$ be a commutative ring and $\cF, \cF'\in M_n(R)$ symmetric matrices.  If $\cF'=P^t\cF P$ for some $P\in \GL_n(R)$, then $C_R(\cF)\cong C_R(\cF')$. 
\end{lemma}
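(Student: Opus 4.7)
The plan is to exhibit the isomorphism explicitly as a change of variables on generators. Write $C_R(\cF) = R\langle x_1,\ldots,x_n\rangle / I$ and $C_R(\cF') = R\langle x_1',\ldots,x_n'\rangle / I'$, where $I$ and $I'$ are the two-sided ideals generated by the respective anticommutation relations. Given $P \in \GL_n(R)$ with $\cF' = P^t \cF P$, define an $R$-algebra homomorphism on the free algebra
\[
\widetilde\phi : R\langle x_1',\dots,x_n'\rangle \longrightarrow C_R(\cF), \qquad \widetilde\phi(x_i') = \sum_{k=1}^{n} P_{ki}\, x_k.
\]

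Next I would verify that $\widetilde\phi$ annihilates the defining relations, so it descends to $\phi : C_R(\cF') \to C_R(\cF)$. This is the only computation of substance. Using the relations $x_k x_l + x_l x_k = \cF_{kl}$ in $C_R(\cF)$, commutativity of $R$, and symmetry of $\cF$, one computes
\[
\phi(x_i')\phi(x_j') + \phi(x_j')\phi(x_i') = \sum_{k,l} P_{ki} P_{lj}\bigl(x_k x_l + x_l x_k\bigr) = \sum_{k,l} P_{ki}\, \cF_{kl}\, P_{lj} = (P^t \cF P)_{ij} = \cF'_{ij},
\]
which is exactly the relation in $C_R(\cF')$. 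Hence $\phi$ is a well-defined $R$-algebra homomorphism.

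Since $\cF' = P^t \cF P$ is equivalent to $\cF = (P^{-1})^t \cF' P^{-1}$, the same construction with $P$ replaced by $P^{-1}$ produces an $R$-algebra homomorphism $\psi : C_R(\cF) \to C_R(\cF')$ with $\psi(x_k) = \sum_i (P^{-1})_{ik} x_i'$. Checking on generators,
\[
\psi(\phi(x_j')) = \sum_k P_{kj}\, \psi(x_k) = \sum_{i,k} P_{kj}(P^{-1})_{ik}\, x_i' = \sum_i \delta_{ij}\, x_i' = x_j',
\]
and symmetrically $\phi \circ \psi = \id$, so $\phi$ is an isomorphism of $R$-algebras.

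The verification above is entirely routine; the only subtle point is remembering that symmetry of $\cF$ and commutativity of $R$ are both used in converting the double sum into $(P^t \cF P)_{ij}$ without sign or ordering issues. Alternatively, once $\phi$ is defined, one could invoke Lemma \ref{lem.HO} to conclude that $\phi$ sends a generating set to a generating set of a free $R$-module of the same rank $2^n$, reducing the problem to surjectivity, but the explicit inverse is just as short and avoids having to track degrees of monomials.
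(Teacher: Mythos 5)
Your proof is correct and follows essentially the same route as the paper: the same change of variables $x_i'\mapsto\sum_k P_{ki}x_k$ on the free algebra, the same computation showing the relations of $C_R(\cF')$ map into those of $C_R(\cF)$, and the inverse obtained from $P^{-1}$. The only cosmetic difference is that you verify $\phi$ and $\psi$ are mutually inverse on generators, whereas the paper argues via surjectivity of both maps.
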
 

\begin{proof} Let $P=(p_{ij})\in \GL_n(R)$ and define an $R$-algebra automorphism $\phi_P:R\<x_1, \dots, x_n\>\to R\<x_1, \dots, x_n\>$ by $\phi_P(x_j)=\sum_{i=1}^np_{ij}x_i$.  
Since 
\begin{align*}
& \phi_P(x_ix_j+x_jx_i-\cF'_{ij}) \\
= & \left(\sum_{s=1}^np_{si}x_s\right)\left(\sum_{t=1}^np_{tj}x_t\right)+\left(\sum_{t=1}^np_{tj}x_t\right)\left(\sum_{s=1}^np_{si}x_s\right)-(P^t\cF P)_{ij} \\
= & \sum_{1\leq s, t\leq n}p_{si}p_{tj}(x_sx_t+x_tx_s-\cF_{st}),
\end{align*}
$\phi_P$ induces a surjective ring homomorphism $C_R(\cF')\to C_R(\cF)$.  Applying the same argument to $\phi_P^{-1}=\phi_{P^{-1}}$, we have $C_R(\cF)\cong C_R(\cF')$.     
\end{proof}

\begin{lemma}[Sylvester's theorem {(\cite[Theorem IV.7]{N})}] \label{lem.FF'2} \label {lem.sy11}  
Let $K$ be a field such that $\operatorname{char} K\neq 2$. 
 For every symmetric matrix $A\in M_n(K)$ of rank $r$, there exists a diagonal matrix $A'=\diag (\l_1, \dots, \l_r, 0, \dots 0)\in M_n(K)$ where $\l_i\neq 0$ for every $i=1, \dots, r$ such that 
 $A' = P^t A P$ for some $P \in \GL_n(K)$.  
Moreover, if $K$ is algebraically closed, then we may even take $\l_i=1$ for every $i=1, \dots, r$.
\end{lemma}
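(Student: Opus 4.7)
The plan is to prove Sylvester's theorem by induction on $n$, the standard argument being a matrix translation of the Gram--Schmidt orthogonalization procedure for a symmetric bilinear form. The base case $n=0$ is trivial, and if $A = 0$ there is nothing to do, so assume $n \geq 1$ and $A \neq 0$.

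The first step is to produce a non-isotropic vector, i.e.\ some $v \in K^n$ with $v^t A v \neq 0$. This is where the hypothesis $\operatorname{char} K \neq 2$ is essential: if $v^t A v = 0$ for every $v$, then for any $u, v$ the polarization identity
\begin{equation*}
(u+v)^t A (u+v) - u^t A u - v^t A v = 2\, u^t A v
\end{equation*}
would force $u^t A v = 0$ for all $u,v$, hence $A = 0$, contradicting $A \neq 0$. Pick such a $v$, set $\lambda_1 := v^t A v \neq 0$, and extend $v$ to a basis $v, w_2, \dots, w_n$ of $K^n$.

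Next I would orthogonalize this basis against $v$: replace each $w_i$ by $v_i := w_i - \frac{v^t A w_i}{\lambda_1} v$. Then $v_i^t A v = 0$ for $i = 2,\dots,n$, and the vectors $v, v_2, \dots, v_n$ still form a basis because the change-of-basis matrix is upper-unitriangular. Collecting them as columns of a matrix $Q \in \GL_n(K)$, the congruence $Q^t A Q$ becomes block-diagonal of the form $\diag(\lambda_1, A_1)$ with $A_1 \in M_{n-1}(K)$ symmetric. By induction there exists $P_1 \in \GL_{n-1}(K)$ with $P_1^t A_1 P_1 = \diag(\lambda_2, \dots, \lambda_s, 0, \dots, 0)$ and all $\lambda_i \neq 0$. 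Setting $P := Q \cdot \diag(1, P_1) \in \GL_n(K)$ yields $P^t A P = \diag(\lambda_1, \lambda_2, \dots, \lambda_s, 0, \dots, 0)$. Since $P$ is invertible, $\rank(P^t A P) = \rank A = r$, so exactly $s = r$ diagonal entries are nonzero, as claimed.

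For the final statement, assume $K$ is algebraically closed. Since each $\lambda_i \neq 0$ admits a square root $\mu_i \in K$, rescale by the diagonal matrix $D := \diag(\mu_1^{-1}, \dots, \mu_r^{-1}, 1, \dots, 1) \in \GL_n(K)$; then $(PD)^t A (PD) = \diag(1,\dots,1,0,\dots,0)$ with $r$ ones. There is no real obstacle here: the only delicate point is the existence of a non-isotropic vector, which, as noted above, uses $\operatorname{char} K \neq 2$ in an essential way via the polarization identity.
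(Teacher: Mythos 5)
Your proof is correct: the induction via a non-isotropic vector (whose existence uses $\operatorname{char} K\neq 2$ through the polarization identity) followed by orthogonalization against it, and the final rescaling by square roots over an algebraically closed field, is the standard and complete argument. Note that the paper does not prove this lemma at all --- it is quoted from the literature (Newman, Theorem IV.7) --- so there is no in-paper proof to compare against; your write-up fills that gap correctly.
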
 

\begin{lemma}[{\cite[Theorem V.2.2]{L}}] \label{lem.cKf} 
 Let $K$ be a field such that ${\rm char } K\neq 2$, and $\cF\in M_n(K)$ a diagonal matrix.  If $\det \cF\neq 0$, then  
 $$Z(C_K(\cF))=\begin{cases} 
 K & \textnormal { if $n$ is even,} \\
 K+Kx_1\cdots x_n & \textnormal { if $n$ is odd.}\end{cases} $$
\end{lemma}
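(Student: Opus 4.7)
The plan is to exploit the explicit $K$-basis $\{x_I = x_{i_1}\cdots x_{i_t} \mid I = \{i_1 < \cdots < i_t\} \subseteq \{1,\dots,n\}\}$ provided by Lemma \ref{lem.HO}, to compute the commutators $[x_k, x_I]$ directly, and to read off the center from them. Since $\cF$ is diagonal with nonzero entries $\lambda_i$ and $\operatorname{char} K \neq 2$, the defining relations reduce to $x_i x_j = -x_j x_i$ for $i \neq j$ and $x_k^2 = \lambda_k/2$. I would also use the $\ZZ/2$-grading $C_K(\cF) = C_0 \oplus C_1$ induced by the parity of $|I|$: since each $x_k$ is odd, $[x_k, C_p] \subseteq C_{1-p}$, so writing $z = z_0 + z_1$ with $z_i \in C_i$, the element $z$ is central iff $z_0$ and $z_1$ are each central. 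It therefore suffices to handle the two parities separately.

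Next I would compute $[x_k, x_I]$ by moving $x_k$ across $x_I$. If $k \notin I$, straightforward anti-commutation gives $x_I x_k = (-1)^{|I|} x_k x_I$, so $[x_k, x_I] = 0$ exactly when $|I|$ is even, and otherwise $[x_k, x_I] = \pm 2\, x_{I \cup \{k\}}$. If $k = i_s \in I$, the anti-commutations together with $x_k^2 = \lambda_k/2$ yield
$$
[x_k, x_I] = \bigl((-1)^{s-1} - (-1)^{|I|-s}\bigr)\,\frac{\lambda_k}{2}\, x_{I \setminus \{k\}},
$$
which vanishes exactly when $|I|$ is odd. So $x_k$ commutes with $x_I$ precisely when either ($k \notin I$ and $|I|$ is even) or ($k \in I$ and $|I|$ is odd).

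To impose centrality I would now examine each parity. For $z_0 = \sum_{|I|\text{ even}} c_I x_I$, only terms with $k \in I$ contribute to $[x_k, z_0]$; $K$-linear independence of the distinct basis elements $x_{I \setminus \{k\}}$ (as $I$ varies over even subsets containing $k$) together with $\lambda_k \neq 0$ forces $c_I = 0$ for every $I$ with $|I| \geq 2$, so $z_0 \in K$. For $z_1 = \sum_{|I|\text{ odd}} c_I x_I$, only terms with $k \notin I$ contribute to $[x_k, z_1]$, so $c_I = 0$ for every odd $I$ admitting some $k \notin I$, i.e., for every odd $I$ with $|I| < n$. The only possibly surviving basis element is $x_1 \cdots x_n$, which occurs only when $n$ is odd; a direct application of the $k \in I$ commutator formula (with $|I| = n$ odd) confirms it is then central, giving the stated description of $Z(C_K(\cF))$.

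The main obstacle will be keeping the signs straight in the $k \in I$ commutator computation: an off-by-one in the parities $s-1$ and $|I|-s$ would swap the roles of even and odd $|I|$ and collapse the whole dichotomy, converting an ``even/odd'' criterion into the wrong one. I would sanity-check the formula on $n = 2$ and $n = 3$ (where $Z$ must be $K$ and $K + Kx_1x_2x_3$ respectively) before relying on it in general.
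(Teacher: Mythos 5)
Your proof is correct. The paper itself gives no argument for this lemma---it is quoted directly from Lam's book (\cite[Theorem V.2.2]{L})---so you have supplied the missing computation, and it is essentially the standard textbook one: reduce the relations to $x_ix_j=-x_jx_i$ ($i\neq j$) and $x_k^2=\lambda_k/2$, use the basis $\{x_I\}$ from Lemma \ref{lem.HO} together with the $\ZZ/2$-grading to split a central element by parity, and test against the generators. Your sign bookkeeping checks out: for $k=i_s\in I$ with $|I|=t$ one gets $[x_k,x_I]=\bigl((-1)^{s-1}-(-1)^{t-s}\bigr)(\lambda_k/2)x_{I\setminus\{k\}}$, which vanishes iff $t$ is odd and otherwise has coefficient $\pm\lambda_k\neq 0$ (using $\det\cF\neq 0$ and $\operatorname{char}K\neq 2$); for $k\notin I$ one gets $(1-(-1)^{|I|})x_kx_I$, which vanishes iff $|I|$ is even. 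The linear-independence argument then kills every basis coefficient except the constant term and, when $n$ is odd, the top element $x_1\cdots x_n$, exactly as claimed. The only point worth making explicit in a write-up is the (easy) fact that commuting with the generators $x_1,\dots,x_n$ suffices for centrality; with that said, the argument is complete.
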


\begin{lemma} \label{lem.bac} 
Let $\phi:R\to S$ be a homomorphism of commutative rings $R, S$, and $\bar\phi:R\<x_1, \dots, x_n\>\to S\<x_1, \dots, x_n\>$ a natural extension of $\phi$. 
\begin{enumerate}
\item{} $\varphi:R\<x_1, \dots, x_n\>\otimes _RS\to S\<x_1,\dots, x_n\>; \: f\otimes s\mapsto \bar\phi(f)s$ is an isomorphism of rings. 
\item{}
For $f_1, \dots, f_r\in R\<x_1, \dots,x_n\>$, $\varphi$ induces an isomorphism
$$R\<x_1, \dots, x_n\>/(f_1, \dots, f_r)\otimes _RS\to S\<x_1,  \dots, x_n\>/(\bar\phi(f_1), \dots, \bar\phi(f_r))$$
of rings. 
\end{enumerate}
\end{lemma}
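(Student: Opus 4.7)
The plan for part (1) is to invoke the universal property of the free associative algebra. Since $S$ is commutative, the map $\varphi$ is straightforwardly a ring homomorphism: the only nontrivial check is that $s\in S$ commutes with $\bar\phi(f')\in S\langle x_1,\dots,x_n\rangle$, which holds because $S$ lies in the center of $S\langle x_1,\dots,x_n\rangle$. To produce a two-sided inverse, I would equip $R\langle x_1,\dots,x_n\rangle\otimes_R S$ with its natural $S$-algebra structure via $s\mapsto 1\otimes s$ (again well defined by commutativity of $S$) and apply the universal property of $S\langle x_1,\dots,x_n\rangle$ to the tuple $(x_1\otimes 1,\dots,x_n\otimes 1)$, obtaining an $S$-algebra map $\psi\colon S\langle x_1,\dots,x_n\rangle\to R\langle x_1,\dots,x_n\rangle\otimes_R S$ with $\psi(x_i)=x_i\otimes 1$. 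A direct computation on a monomial $rx^I$, using $r\otimes 1=1\otimes\phi(r)$, gives $\psi(\bar\phi(rx^I))=rx^I\otimes 1$, so $\psi\circ\varphi=\id$; the other composition is $\id$ on the generators $x_i$ and on $S$, hence on everything. Equivalently, both sides are free $S$-modules on the monomials in the $x_i$, and $\varphi$ matches the two bases.

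For part (2), I would deduce the claim from (1) by right exactness of $-\otimes_R S$. Set $A=R\langle x_1,\dots,x_n\rangle$, $B=S\langle x_1,\dots,x_n\rangle$, $I=(f_1,\dots,f_r)\subset A$, and $J=(\bar\phi(f_1),\dots,\bar\phi(f_r))\subset B$, and let $K$ denote the image of $I\otimes_R S$ in $A\otimes_R S$. Right exactness yields $(A/I)\otimes_R S\cong(A\otimes_R S)/K$, so it suffices to show $\varphi(K)=J$, for then applying the ring isomorphism $\varphi$ from part (1) gives the desired ring isomorphism $(A/I)\otimes_R S\cong B/J$. The inclusion $\varphi(K)\subseteq J$ is immediate from $\varphi(i\otimes s)=\bar\phi(i)s$ for $i\in I$, $s\in S$. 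For the reverse inclusion, any generator $a\,\bar\phi(f_j)\,b$ of $J$ with $a,b\in B$ is expanded in the monomial $S$-basis of $B$ as $a=\sum s_L x^L$, $b=\sum s'_M x^M$, and since $S$ is central in $B$ one obtains
$$a\,\bar\phi(f_j)\,b=\sum_{L,M}s_L s'_M\,\bar\phi(x^L f_j x^M)\in\bar\phi(I)\cdot S=\varphi(K).$$

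The main (mildly technical) obstacle is exactly this last identification $\varphi(K)=J$: it is where commutativity of $S$, equivalently centrality of $S$ inside $B$, is used to move $S$-scalars across the $f_j$ so that arbitrary two-sided combinations in $B$ can be rewritten as images under $\bar\phi$ of elements of $I$, up to $S$-scalar multiplication. Everything else is either a direct universal-property argument or a formal consequence of right exactness.
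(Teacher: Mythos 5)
Your proof is correct: part (1) via the universal property (or, equivalently, matching the free $S$-module bases of monomials) and part (2) via right exactness of $-\otimes_R S$ together with the identification $\varphi(K)=J$, which you justify properly using the centrality of $S$ in $S\langle x_1,\dots,x_n\rangle$. The paper gives no proof of this lemma (it is explicitly left to the reader), and your argument is the standard one the authors evidently have in mind.
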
 

\begin{proof} 
Left to the reader.
\end{proof}

Let $R$ be a commutative graded algebra and $\cF\in M_n(R)$ a symmetric matrix.  If $\cF_{ij}\in R_2$ for every $1\leq i, j\leq n$, then we may view $C_R(\cF)$ as a graded algebra by $\deg x_i=1$ for every $1\leq i\leq n$.  
The following result explaining why the algebra $C(F)$ in Definition \ref{def.gcliffod} is called a graded Clifford algebra is standard (see \cite[Section 4]{LeB}).  We will include a proof for the convenience of the reader since we will use it several times in this paper.) 

\begin{lemma} \label{lem.c1} 
If $F=(F_1, \dots, F_n)\in M_n(k)^{\times n}$ is a sequence of symmetric matrices, and $\cF:=F_1y_1+\cdots +F_ny_n\in M_n(R)$ where $R:=k[y_1, \dots, y_n]$, then there is an isomorphism $C(F)\to C_R(\cF)$ of graded algebras, which is also an isomorphism of $R$-algebras. 
\end{lemma}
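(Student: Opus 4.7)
The plan is to construct the isomorphism directly on generators and produce an explicit two-sided inverse; the verifications then amount to matching the two sets of defining relations. The key observation is that in $C_R(\cF)$ the elements of $R$ are central by construction, while in $C(F)$ the generators $y_1,\dots,y_n$ are central and mutually commuting by the second and third families of defining relations. This symmetry between the two presentations is what makes both maps well-defined.

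First I would define a graded $k$-algebra homomorphism $\bar\phi\colon C(F)\to C_R(\cF)$ by $x_i\mapsto x_i$ and $y_j\mapsto y_j\in R\subset C_R(\cF)$, where $C_R(\cF)$ is graded by $\deg x_i=1$ and $\deg y_j=2$, so that the Clifford relations (with $\cF_{ij}\in R_2$) are homogeneous of degree $2$. The relations $x_iy_j-y_jx_i$ and $y_iy_j-y_jy_i$ map to zero because $R$ is central in $C_R(\cF)$, and
\[
x_ix_j+x_jx_i-\sum_{m=1}^n(F_m)_{ij}y_m \;\longmapsto\; x_ix_j+x_jx_i-\cF_{ij}=0
\]
by the defining Clifford relation of $C_R(\cF)$. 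Thus $\bar\phi$ is well-defined, graded, and respects the $R$-actions.

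For the inverse, the defining relations of $C(F)$ force $y_1,\dots,y_n$ into $Z(C(F))$ and make them pairwise commute, so there is a graded $k$-algebra map $R\to Z(C(F))$ given by $y_j\mapsto y_j$, promoting $C(F)$ to an $R$-algebra. The universal property of $R\langle x_1,\dots,x_n\rangle$ as a free $R$-algebra then yields an $R$-algebra homomorphism $R\langle x_1,\dots,x_n\rangle\to C(F)$ sending $x_i\mapsto x_i$, under which each Clifford relation $x_ix_j+x_jx_i-\cF_{ij}$ is sent to $x_ix_j+x_jx_i-\sum_m(F_m)_{ij}y_m=0$. Hence this map factors through a graded $R$-algebra homomorphism $\psi\colon C_R(\cF)\to C(F)$.

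Finally, $\bar\phi$ and $\psi$ agree with the identity on the generators $x_i$ and $y_j$, hence are mutually inverse, and both preserve the $R$-algebra structure by construction; this yields the claimed isomorphism of graded $R$-algebras. There is no real obstacle here beyond careful bookkeeping of the relations; the only subtle point is recognising $R\langle x_1,\dots,x_n\rangle$ as the free $R$-algebra on $x_1,\dots,x_n$ (compatibly with Lemma~\ref{lem.bac}(1)) so that the factorisation defining $\psi$ is legitimate.
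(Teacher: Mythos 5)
Your proposal is correct and takes essentially the same route as the paper: both arguments come down to recognising $R\langle x_1,\dots,x_n\rangle$ as the free algebra on the $x_i$ and $y_j$ modulo the relations making the $y_j$ central and mutually commuting (the paper via Lemma \ref{lem.bac}(1), you via the universal property), after which the Clifford relations on the two sides match up verbatim. Your explicit construction of the two mutually inverse maps is just a slightly more hands-on phrasing of the paper's chain of identifications of presentations.
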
  

\begin{proof} 
By Lemma \ref{lem.bac} (1), 
\begin{align*}
R\<x_1, \dots, x_n\> & \cong k\<x_1, \dots, x_n\>\otimes _kR \\
& =k\<x_1, \dots, x_n\>\otimes _kk\<y_1, \dots, y_n\>/(y_iy_j-y_jy_i)_{1\leq i, j\leq n} \\
& \cong k\<x_1, \dots, x_n, y_1, \dots, y_n)/(x_iy_j-y_jx_i, y_iy_j-y_jy_i)_{1\leq i, j\leq n}.  
\end{align*}
Since $\cF_{ij}=\sum_{m=1}^n(F_m)_{ij}y_m$,  
\begin{align*}
C_R(\cF) & :=R\<x_1, \dots, x_n\>/(x_ix_j+x_jx_i-\cF_{ij})_{1\leq i, j\leq n} \\
& \cong k\<x_1, \dots, x_n, y_1, \dots, y_n\>/(x_ix_j+x_jx_i-\sum_{m=1}^n(F_m)_{ij}y_m, \\
& \ \ \ \ \ \ \ \ \ \ \ \ \ \ \ \ \ \ \ \ \ \ \ \ \ \ \ \ \ \ \ \ \ \ \ \ \ \ \ \ \ \ \ \ \ \  \ x_iy_j-y_jx_i, y_iy_j-y_jy_i)_{1\leq i, j\leq n} \\
& =:C(F). 
\end{align*}
Since the isomorphism of graded algebras constructed above fixes $y_j$ for every $1\leq j\leq n$, it is also an isomorphism of $R$-algebras. 
\end{proof}

\subsection{Normalization} 

For a sequence  $F=(F_1, \dots,  F_r)\in M_n(k)^{\times r}$ of linearly independent symmetric matrices,  we define 
a commutative graded algebras 
$B(F):=k[u_1, \dots, u_n]/(f_1, \dots, f_r)$ where $\deg u_i=1$ for $1\leq i\leq n$ and $f_m:=\sum _{1\leq i, j\leq n}(F_m)_{ij}u_iu_j\in k[u_1, \dots, u_n]_2$ for $1\leq m\leq r$.
  
\begin{definition} We say that a sequence $F=(F_1, \dots,  F_n)\in M_n(k)^{\times n}$  of symmetric matrices is base point free if $\cap _{m=1}^n\cV(f_m)=\emptyset$ in $\PP^{n-1}$ where $f_m=\sum_{1\leq i, j\leq n }(F_m)_{ij}u_iu_j\in k[u_1, \dots, u_n]_2$. 
\end{definition}

\begin{remark} \label{rem.bpf} 
A sequence $F=(F_1, \dots,  F_n)\in M_n(k)^{\times n}$ of symmetric matrices is base point free if and only if $B(F)\in \sB_{n, n}$.  This fact was even extended to a skew polynomial algebra in \cite[Corollary 11 (Corrigendum)]{CV1}.
\end{remark}

\begin{definition} \label{def.clP} 
Let $F=(F_1, \dots,  F_r), F'=(F'_1, \dots,  F'_r)\in M_n(k)^{\times r}$ be sequences of linearly independent symmetric matrices.  
\begin{enumerate}
\item{} We write $F\sim_sF'$ if there exists $P=(p_{ij})\in \GL_r(k)$ such that $F'_j=\sum _{i=1}^rp_{ij}F_i$ for every $j=1, \dots, r$. 
\item{} We write $F\sim_tF'$ if there exists $P=(p_{ij})\in \GL_n(k)$ such that $F'_j=P^tF_jP$
for every $j=1, \dots, r$. 
\item{} We write $F\sim_{st}F'$ if there exists $F''$ such that $F\sim _sF''\sim _tF'$.   
\end{enumerate}
\end{definition}

\begin{lemma} \label{lem.bP} Let $F=(F_1, \dots,  F_r), F'=(F'_1, \dots,  F'_r)\in M_n(k)^{\times r}$ be sequences of linearly independent symmetric matrices.  
\begin{enumerate}
\item{} If $F\sim _sF'$,  
 then $B(F)= B(F')$.
 \item{} If $F\sim _tF'$,  
 then $B(F)\cong B(F')$.
 \item{} $F\sim_{st}F'$ if and only if $B(F)\cong B(F')$.  
 \end{enumerate} 
\end{lemma}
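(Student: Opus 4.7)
The plan is to handle the three parts in sequence, with (1) and (2) being direct computations and (3) requiring a lifting argument for the backward direction.

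For (1), if $F'_j = \sum_i p_{ij} F_i$ with $P = (p_{ij}) \in \GL_r(k)$, then immediately $f'_j = \sum_i p_{ij} f_i$, so the ideals $(f_1, \dots, f_r)$ and $(f'_1, \dots, f'_r)$ of $k[u_1,\dots,u_n]$ coincide, and $B(F) = B(F')$ as quotient rings.

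For (2), suppose $F'_j = P^t F_j P$ with $P = (p_{ij}) \in \GL_n(k)$. I would define the graded $k$-algebra automorphism $\phi$ of $k[u_1,\dots,u_n]$ by $\phi(u_j) = \sum_i p_{ji} u_i$. Writing $f_m = u^t F_m u$ in matrix notation (where $u = (u_1,\dots,u_n)^t$), a direct substitution gives $\phi(f_m) = u^t P^t F_m P u = u^t F'_m u = f'_m$, so $\phi$ descends to a graded algebra isomorphism $B(F) \to B(F')$.

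Part (3) reduces in the forward direction to composing the isomorphisms from (1) and (2). For the nontrivial backward direction, given a graded algebra isomorphism $\psi: B(F) \to B(F')$, I would first lift it to a graded automorphism $\Phi$ of the free polynomial algebra $k[u_1,\dots,u_n]$. Since all $f_i$ are in degree $2$, the induced map $\psi_1$ on the degree-$1$ part is a linear isomorphism $k[u]_1 \to k[u]_1$, which extends by the universal property to an automorphism $\Phi$ corresponding to some $P \in \GL_n(k)$. The compatibility $\Phi((f_1, \dots, f_r)) = (f'_1, \dots, f'_r)$ combined with comparison in degree $2$ forces $\Phi(\mathrm{span}\{f_1,\dots,f_r\}) = \mathrm{span}\{f'_1,\dots,f'_r\}$ inside $k[u]_2$. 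The matrix computation from (2) shows $\Phi(f_m) = u^t P^t F_m P u$, so there exist scalars assembling into an invertible matrix $Q = (q_{jm}) \in \GL_r(k)$ with $P^t F_m P = \sum_j q_{jm} F'_j$. Setting $F''_j := \sum_i (Q^{-1})_{ij} F_i$ yields $F \sim_s F''$, and rearranging the displayed relation gives $F'_j = P^t F''_j P$, so $F'' \sim_t F'$. Hence $F \sim_{st} F'$.

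The main routine care-point is the matrix/index convention in part (2) (one must be careful whether the symmetric-matrix substitution produces $P^tFP$ or $PFP^t$); the only genuinely conceptual step is the lifting of a graded algebra isomorphism to an automorphism of the polynomial ring in the backward direction of (3), which is unobstructed since the relations live entirely in degree $2$ and therefore do not constrain the degree-$1$ component of $\Phi$.
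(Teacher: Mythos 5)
Your proof is correct; the paper states Lemma \ref{lem.bP} without proof, and your argument is exactly the expected one: the substitution computations in (1) and (2) mirror the paper's proof of the analogous Lemma \ref{lem.clP} for $C(F)$, and the backward direction of (3) via lifting a graded isomorphism to $\GL_n(k)$ and comparing the degree-$2$ spans (using that symmetric matrices biject with quadratic forms in characteristic $0$, and that linear independence of the $F_i$ and $F'_j$ makes $Q$ invertible) is the standard argument. No gaps.
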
 

\begin{lemma} \label{lem.clP} Let $F=(F_1, \dots,  F_n), F'=(F'_1, \dots,  F'_n)\in M_n(k)^{\times n}$ be sequences of linearly independent symmetric matrices.  If $F\sim_{st}F'$, then $C(F)\cong C(F')$.  
\end{lemma}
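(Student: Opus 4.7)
The plan is to pass from the graded Clifford algebra picture to the Clifford-algebra-over-$R$ picture via Lemma \ref{lem.c1}, and then handle the two generating relations $\sim_s$ and $\sim_t$ separately, each by producing an explicit isomorphism between the corresponding $R$-Clifford algebras. Since $\sim_{st}$ is by definition the composition of an $\sim_s$ step and a $\sim_t$ step, it is enough to prove $C(F)\cong C(F')$ in each of these two cases. Throughout I set $R := k[y_1,\dots,y_n]$ (graded with $\deg y_i=2$) and $\cF := \sum_m F_m y_m\in M_n(R)$, so that Lemma \ref{lem.c1} gives a graded isomorphism $C(F)\cong C_R(\cF)$.

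First I handle the case $F\sim_s F'$. Given $P=(p_{ij})\in \GL_n(k)$ with $F'_j = \sum_i p_{ij}F_i$, a direct rewrite gives
\begin{equation*}
\cF' \;=\; \sum_j F'_j y_j \;=\; \sum_i F_i \Bigl(\sum_j p_{ij} y_j\Bigr).
\end{equation*}
Thus the graded $k$-algebra automorphism $\phi:R\to R$ defined by $\phi(y_i) := \sum_j p_{ij} y_j$ (an automorphism because $P$ is invertible) sends $\cF$ entrywise to $\cF'$. Extending $\phi$ to the free $R$-algebra $R\langle x_1,\dots,x_n\rangle$ by $x_i\mapsto x_i$ (a graded ring automorphism) sends the Clifford relations for $\cF$ to those for $\cF'$, so it descends to a graded isomorphism $C_R(\cF)\cong C_R(\cF')$. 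Combined with Lemma \ref{lem.c1}, this yields $C(F)\cong C(F')$.

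Next I handle the case $F\sim_t F'$. If $F'_j = P^t F_j P$ for some $P\in \GL_n(k)$, then
\begin{equation*}
\cF' \;=\; \sum_j F'_j y_j \;=\; P^t \Bigl(\sum_j F_j y_j\Bigr) P \;=\; P^t \cF P,
\end{equation*}
viewing $P\in \GL_n(k)\subset \GL_n(R)$. Lemma \ref{lem.FF'} then gives $C_R(\cF)\cong C_R(\cF')$ as $R$-algebras. Because the entries of $P$ lie in $k$, the explicit isomorphism in the proof of Lemma \ref{lem.FF'} sends $x_j\mapsto \sum_i p_{ij}x_i$ (degree $1$) and fixes the $y_m$, so it is in fact a graded isomorphism. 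Applying Lemma \ref{lem.c1} at both ends again gives $C(F)\cong C(F')$.

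The combination of these two steps gives the result for $\sim_{st}$. The argument is essentially routine once one moves to the picture $C(F)=C_R(\cF)$; the only point requiring a little care is verifying that both isomorphisms constructed are graded isomorphisms (and not merely ring isomorphisms), which is the main place where one must check that the change of variables in $R$ for $\sim_s$ preserves the grading $\deg y_i=2$ and that the change of variables in the $x_j$ for $\sim_t$ preserves $\deg x_j=1$; both are immediate because the matrices $P$ have entries in $k$ in each case.
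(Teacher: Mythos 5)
Your proof is correct and follows essentially the same route as the paper: decompose $\sim_{st}$ into an $\sim_s$ step (handled by the substitution $y_i\mapsto\sum_j p_{ij}y_j$ fixing the $x_i$) and an $\sim_t$ step (handled by Lemma \ref{lem.FF'} applied over $R=k[y_1,\dots,y_n]$ with $P\in\GL_n(k)\subset\GL_n(R)$, together with Lemma \ref{lem.c1}). The only cosmetic difference is that you route the $\sim_s$ case through $C_R(\cF)$ while the paper defines the isomorphism directly on $C(F)$.
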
 
 
 \begin{proof}
If  $F\sim _sF'$ so that $F'_j=\sum _{i=1}^np_{ij}F_i$ for some $P=(p_{ij})\in \GL_n(k)$ for every $j=1, \dots, n$, then 
$$\sum_{j=1}^nF'_jy_j=\sum_{j=1}^n\left(\sum _{i=1}^np_{ij}F_i\right)y_j=\sum_{i=1}^nF_i\left(\sum _{j=1}^np_{ij}y_j\right),$$
so the map $\phi:C(F)\to C(F')$ defined by $\phi(x_i)=x_i, \phi(y_i)=\sum_{j=1}^np_{ij}y_j$ is an isomorphism of graded algebras. 

On the other hand, let $R=k[y_1, \dots, y_n]$, and $\cF=\sum _{m=1}^nF_my_m, \cF'=\sum _{m=1}^nF'_my_m\in M_n(R)$.  If $F\sim _tF'$ so that $F'_j=P^tF_jP$ for some $P\in \GL_n(k)$ 
for every $j=1, \dots, n$, then $\cF'=P^t\cF P$ for $P\in \GL_n(R)$.  Since $p_{ij}\in k=R_0$ for every $1\leq i, j\leq n$, $C_R(\cF)\cong C_R(\cF')$ as graded algebras by the proof of Lemma \ref{lem.FF'}, so 
$C(F)\cong C_R(\cF)\cong C_R(\cF')\cong C(F')$ as graded algebras by Lemma \ref{lem.c1}. 
 \end{proof} 

We prepare some elementary results on matrices.  

\begin{lemma} \label{lem.ac} 
Let $A=(a_{ij})\in M_m(k)$ be a symmetric matrix and $C=(c_{ij})\in M_{m, l}(k)$.  If $R^tAR+R^tC+C^tR=O$ for every $R=(r_{ij})\in M_{m, l}(k)$, then $A=O$ and $C=O$. 
\end{lemma}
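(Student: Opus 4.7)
The plan is to separate the given identity into its homogeneous pieces in the entries of $R$ and then test each piece against elementary matrices.

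First, I would substitute $tR$ for $R$ with $t \in k$ arbitrary; the hypothesis becomes
\begin{equation*}
t^2(R^tAR) + t(R^tC + C^tR) = O
\end{equation*}
as a polynomial identity in $t$. Since $k$ is infinite (being algebraically closed of characteristic zero), comparing coefficients of $t^2$ and $t$ yields the two separate identities $R^tAR = O$ and $R^tC + C^tR = O$, each valid for every $R \in M_{m,l}(k)$.

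For the linear identity I would substitute $R = E_{ij}$, the elementary matrix with $1$ in position $(i,j)$ and $0$ elsewhere. A direct computation gives $(R^tC)_{pq} = \delta_{pj} C_{iq}$ and $(C^tR)_{pq} = \delta_{qj} C_{ip}$, so reading off entries $(j,q)$ with $q \neq j$ isolates a single $C_{iq}$; letting $i, j, q$ range forces every entry of $C$ to vanish, hence $C = O$.

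For the quadratic identity I would polarize: replacing $R$ by $R + R'$ and subtracting the identities $R^tAR = O$ and $(R')^tAR' = O$ produces the bilinear relation $R^tAR' + (R')^tAR = O$ for all $R, R' \in M_{m,l}(k)$. Taking $R = E_{ij}$ and $R' = E_{kj}$ with a common column index $j$ and inspecting the $(j,j)$ entry gives $A_{ik} + A_{ki} = 2A_{ik} = 0$, using the symmetry of $A$ and the fact that $\mathrm{char}\, k = 0$, so $A = O$.

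I do not anticipate any real obstacle here; the argument is standard polarization plus evaluation on a basis of $M_{m,l}(k)$, and the only care needed is careful bookkeeping of indices when writing out $(E_{ij}^t A E_{kj})_{pq}$ and the analogous quantity for $C$.
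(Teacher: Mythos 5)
Your argument is correct and is in essence the paper's own proof: the paper simply reads off the $(s,s)$ entry of the identity as a polynomial in the indeterminates $r_{is}$ and compares coefficients over the infinite field $k$ (using symmetry of $A$ and $\operatorname{char} k\neq 2$), which packages your scaling substitution, polarization, and evaluation on elementary matrices into a single step. The one detail to patch is the case $l=1$, where there are no entries $(j,q)$ with $q\neq j$; there you should instead read the diagonal entry, which gives $(R^tC+C^tR)_{jj}=2C_{ij}=0$ and finishes the argument for $C$.
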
 

\begin{proof} Viewing as a polynomial in $k[r_{is}]_{1\leq i\leq m, 1\leq s\leq l}$, 
$$(R^tAR+R^tC+C^tR)_{ss}=\sum _{1\leq i, j\leq m}a_{ij}r_{is}r_{js}+2\sum _{i=1}^m
c_{is}r_{is}=0$$
for every $s=1, \dots, l$. 
Since $A$ is symmetric, $a_{ij}=c_{ij}=0$ for every $i, j$.
\end{proof} 

\begin{definition} \label{def.norm} We say that a sequence $F=(F_1, \dots,  F_n)\in M_n(k)^{\times n}$ of linearly independent symmetric matrices is {\it normalized} if $((F_i)_{jj})=2E_n$ where $E_n \in M_n(k)$ is the identity matrix. 
\end{definition} 

\begin{lemma} \label{lem.clP2} 
For every sequence  $F=(F_1, \dots,  F_n)\in M_n(k)^{\times n}$ of linearly independent symmetric matrices, there exists a normalized $F'=(F'_1, \dots,  F'_n)\in M_n(k)^{\times n}$ such that $F\sim_{st}F'$.
\end{lemma}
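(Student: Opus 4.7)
The plan is to build $F'$ as a composition $F\sim_t F''\sim_s F'$. The key observation is that for $P\in\GL_n(k)$ with columns $v_1,\dots,v_n$, one has $(P^tF_iP)_{jj}=v_j^tF_iv_j=q_i(v_j)$, where $q_i(v):=v^tF_iv$ is the quadratic form associated to $F_i$. Thus the ``diagonal matrix'' $M(P):=((P^tF_iP)_{jj})_{i,j}$ equals $(q_i(v_j))_{i,j}$, and provided $M(P_1)$ is invertible for a suitable $P_1$, a subsequent $\sim_s$-rescaling will turn it into $2E_n$.

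For the $\sim_t$ step, I would argue that linear independence of the $F_i$ gives linear independence of the quadratic forms $q_i$, and hence the image $I:=\{(q_1(v),\dots,q_n(v))\mid v\in k^n\}\subset k^n$ spans $k^n$ as a linear subspace, since otherwise a nontrivial linear dependence among the coordinates of $I$ would produce a vanishing linear combination of the $q_i$, contradicting their linear independence. Consequently there exist $w_1,\dots,w_n\in k^n$ such that $(q_i(w_j))_{i,j}$ is invertible. Viewing $\det M(P)$ as a polynomial in the $n^2$ entries of $P\in M_n(k)$, this shows $\det M(P)\not\equiv 0$. Since $\GL_n(k)\subset M_n(k)$ is a nonempty Zariski open subset of the irreducible variety $M_n(k)$, intersecting with the nonempty Zariski open locus $\{P\in M_n(k):\det M(P)\neq 0\}$ yields a desired $P_1\in\GL_n(k)$.

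For the $\sim_s$ step, set $F''_i:=P_1^tF_iP_1$ and $M'':=M(P_1)=((F''_i)_{jj})_{i,j}$, and define $P_2:=2((M'')^{-1})^t\in\GL_n(k)$ together with $F'_j:=\sum_{i=1}^n(P_2)_{ij}F''_i$. A direct computation yields $((F'_j)_{kk})_{j,k}=P_2^tM''=2E_n$, so $F'$ is normalized and $F\sim_{st}F'$ by the definition of $\sim_{st}$ in Definition \ref{def.clP}. The main obstacle is the $\sim_t$ step: while the diagonals of the original $F_i$ may fail to span $k^n$, they must do so after a suitable base change, and establishing the existence of such a base change hinges on exploiting the linear independence of the $F_i$ via the associated quadratic forms.
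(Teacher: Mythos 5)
Your argument is correct, and it takes a genuinely different route from the paper's. The paper proves the lemma by an explicit induction: using Sylvester's theorem and a case analysis (via Lemma \ref{lem.ac}) on the block structure of $F'_{m+1}$, it successively arranges the matrix of diagonal data $((F'_i)_{jj})$ to be upper triangular with $1$'s on the diagonal, and then normalizes with a final $\sim_s$. You instead make a genericity argument: writing $M(P)=\bigl(v_j^tF_iv_j\bigr)_{i,j}$ for the columns $v_j$ of $P$, you observe that linear independence of the $F_i$ (equivalently, of the quadratic forms $q_i$, since $\operatorname{char}k=0$) forces the image of $v\mapsto(q_1(v),\dots,q_n(v))$ to span $k^n$, so $\det M(P)$ is a nonzero polynomial on $M_n(k)$; since $M_n(k)$ is irreducible and $k$ is infinite, the two nonempty open loci $\{\det M(P)\neq 0\}$ and $\GL_n(k)$ meet, producing the required $P_1$, after which your $\sim_s$-step with $P_2=2((M'')^{-1})^t$ is a correct direct computation giving $((F'_j)_{kk})_{j,k}=P_2^tM''=2E_n$. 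Your proof is shorter and conceptually cleaner, at the price of being non-constructive and of using that $k$ is infinite (guaranteed by the paper's standing hypotheses); the paper's induction is longer but yields an explicit algorithm. One cosmetic point: you produce $F\sim_tF''\sim_sF'$, whereas Definition \ref{def.clP} defines $\sim_{st}$ as $F\sim_sF''\sim_tF'$; this is harmless because a simultaneous congruence commutes with a $\sim_s$-change of basis (if $F''_j=P_1^tF_jP_1$ and $F'_j=\sum_i(P_2)_{ij}F''_i$, then $F'_j=P_1^t\bigl(\sum_i(P_2)_{ij}F_i\bigr)P_1$), but it deserves a sentence.
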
 

\begin{proof} We first construct $F'\sim _{st}F $, $F'_m=\begin{pmatrix} A_m & * \\ * & * \end{pmatrix}$ where $A_m\in M_m(k)$ such that $(A_m)_{ii}=0$ for $i=1, \dots, m-1$ and $(A_m)_{mm}=1$ 
inductively so that 
$$((F'_i)_{jj})=\begin{pmatrix} 1 & * & \cdots & * \\
0 & 1 & \cdots & * \\
\vdots & \vdots & \ddots & * \\
0 & 0 & \cdots & 1 \end{pmatrix}.$$
Since $F_1\neq O$, we can construct $F'_1$ such that $(F'_1)_{11}=1$ by Lemma \ref{lem.sy11}.  Suppose that we can construct $F'_1, \dots, F'_m$ as above.  We may assume that 
$F'_{m+1}=\begin{pmatrix} A'_m & C'_m \\ {C'_m}^t & B'_m \end{pmatrix}$ where $A'_m\in M_m(k)$ such that $(A'_m)_{ii}=0$ for $i=1, \dots, m$ by using $\sim_s$.  If $B'_m\neq O$, then there exists $P=\begin{pmatrix} E_m & O \\ O & Q \end{pmatrix}\in \GL_n(k)$ such that   
\begin{align*}
& \begin{pmatrix} E_m & O \\ O & Q^t \end{pmatrix} \begin{pmatrix} A_l & * \\ * & * \end{pmatrix}\begin{pmatrix} E_m & O \\ O & Q \end{pmatrix} =\begin{pmatrix} A_l & * \\ * & * \end{pmatrix} \textnormal { for $l=1, \dots, m$, and}  \\
& \begin{pmatrix} E_m & O \\ O & Q^t \end{pmatrix} \begin{pmatrix} A'_m & C'_m \\ {C'_m}^t & B'_m \end{pmatrix}\begin{pmatrix} E_m & O \\ O & Q \end{pmatrix} =\begin{pmatrix} A'_m & C'_mQ \\ Q^t{C'_m}^t & Q^tB'_mQ \end{pmatrix}
\end{align*}
where $(Q^tB'_mQ)_{11}=1$ by Lemma \ref{lem.sy11}, so we are done.  If $B'_m=O$, then $A'_m\neq O$ or $C'_m\neq O$, so there exists $P=\begin{pmatrix} E_m & R \\ O & E_{n-m} \end{pmatrix}\in \GL_n(k)$ such that   
\begin{align*}
& \begin{pmatrix} E_m & O \\ R^t & E_{n-m} \end{pmatrix} \begin{pmatrix} A_l & * \\ * & * \end{pmatrix}\begin{pmatrix} E_m & R \\ O & E_{n-m} \end{pmatrix} =\begin{pmatrix} A_l & * \\ * & * \end{pmatrix} \textnormal { for $l=1, \dots, m$, and} \\
& \begin{pmatrix} E_m & O \\ R^t & E_{n-m} \end{pmatrix} \begin{pmatrix} A'_m & C'_m \\ {C'_m}^t & O \end{pmatrix}\begin{pmatrix} E_m & R \\ O & E_{n-m} \end{pmatrix} \\
=&\begin{pmatrix} A'_m & A'_mR+C \\ R^tA'_m+{C'_m}^t & R^tA'_mR+R^tC'_m+{C'_m}^tR \end{pmatrix}
\end{align*}
where $R^tA'_mR+R^tC'_m+{C'_m}^tR\neq O$ by Lemma \ref{lem.ac}, so we are done. 
It is easy to see that the above $F'$ can be normalized by using $\sim_s$.    
\end{proof}  

The following result is very useful in this paper. 

\begin{corollary} \label{cor.nor} 
For every $B\in \sB_{n, n}$, there exists a normalized $F$ such that $B\cong B(F)$.  
For every $S\in \sC_{n, 0}$, there exists a normalized $F$ such that $S\cong C(F)$.  
\end{corollary}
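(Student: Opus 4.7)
My plan is to obtain this corollary as a direct packaging of the normalization lemma (Lemma~\ref{lem.clP2}) together with the already-established invariance of $B(F)$ and $C(F)$ under the equivalence $\sim_{st}$.

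For the first statement, I would begin with $B\in\sB_{n,n}$ and use Remark~\ref{rem.bpf} to write $B\cong B(F_0)$ for some base point free sequence $F_0\in M_n(k)^{\times n}$ of linearly independent symmetric matrices. Lemma~\ref{lem.clP2} then produces a normalized $F$ with $F_0\sim_{st}F$, and Lemma~\ref{lem.bP}(3) upgrades this to $B(F_0)\cong B(F)$, which gives $B\cong B(F)$.

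For the second statement, I would start with $S\in\sC_{n,0}$; by Definition~\ref{def.gcliffod} this means $S\cong C(F_0)$ for some sequence $F_0\in M_n(k)^{\times n}$ of linearly independent symmetric matrices. Applying Lemma~\ref{lem.clP2} again yields a normalized $F$ with $F_0\sim_{st}F$, and Lemma~\ref{lem.clP} then gives $C(F_0)\cong C(F)$, so $S\cong C(F)$.

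I do not anticipate any serious obstacle in this argument: the nontrivial work has already been carried out in Lemma~\ref{lem.clP2} (the inductive construction of a normalized representative in each $\sim_{st}$-class) and in the two invariance lemmas. The corollary merely records, in a form convenient for the later classification results, the fact that any object of $\sB_{n,n}$ or $\sC_{n,0}$ admits a presentation via a normalized sequence.
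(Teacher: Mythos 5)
Your proposal is correct and follows essentially the same route as the paper, whose proof simply cites Lemma \ref{lem.bP}, Lemma \ref{lem.clP}, and Lemma \ref{lem.clP2}; you have merely spelled out the intermediate steps (including the harmless extra observation via Remark \ref{rem.bpf} that the representing sequence $F_0$ is linearly independent, which is what Lemma \ref{lem.clP2} actually requires).
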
 

\begin{proof} This immediately follows from Lemma \ref{lem.bP}, Lemma \ref{lem.clP}, and Lemma \ref{lem.clP2}.  
\end{proof}

\subsection{Regular sequences} 

\begin{definition} \label{q.abH} 
Let $a(t)=\sum _{i\in \NN}a_it^i, b(t)=\sum _{i\in \NN}b_it^i\in \ZZ[[t]]$.   
\begin{enumerate}
\item{} We write $a(t)\leq b(t)$ if $a_i\leq b_i$ for every $i\in \NN$.  
\item{} We write $a(t)< b(t)$ if $a(t)\leq b(t)$ and $a(t)\neq b(t)$ (that is,  $a_i\leq b_i$ for every $i\in \NN$ and $a_i< b_i$ for some $i\in \NN$).  
\item{} We write $\ZZ[[t]]^+:=\{a(t)\in \ZZ[[t]]\mid a(t)>0\}$.  
\end{enumerate}
\end{definition}

\begin{lemma} \label{lem.abc} 
Let $a(t), b(t), a_1(t), \dots, a_r(t)\in \ZZ[[t]]$.   
\begin{enumerate}
\item{} $(\ZZ[[t]], \leq )$ is a partially ordered set. 
\item{} 
If $a_1(t)\geq a_2(t)\geq \cdots \geq a_r(t)$ and $a_1(t)=a_r(t)$, then 
 $a_1(t)= a_2(t)=\cdots = a_r(t)$.
\item{} If $a(t)\leq b(t)$, then $a(t)c(t)\leq b(t)c(t)$ for every $c(t)\in \ZZ[[t]]^+$. 
\item{} If $a(t)< b(t)$, then $a(t)c(t)< b(t)c(t)$ for every $c(t)\in \ZZ[[t]]^+$. 
\end{enumerate}
\end{lemma}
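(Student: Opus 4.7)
The plan is to verify each part by reducing statements about elements of $\ZZ[[t]]$ to the analogous coefficient-wise statements in $\ZZ$, which are either trivial or elementary. Throughout, I will write $a(t)=\sum_i a_it^i$, etc., and use the fact that $a(t)\leq b(t)$ means $a_i\leq b_i$ for every $i\in\NN$, while $a(t)\in\ZZ[[t]]^+$ means $a_i\geq 0$ for every $i$ and $a_i>0$ for at least one $i$.

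For (1), I would note that reflexivity, antisymmetry, and transitivity of $\leq$ on $\ZZ[[t]]$ each follow from the corresponding property of $\leq$ on $\ZZ$, applied coefficient by coefficient. For (2), fixing $i\in\NN$ and reading off the $t^i$-coefficient gives a descending chain $(a_1)_i\geq (a_2)_i\geq\cdots\geq (a_r)_i$ in $\ZZ$ whose endpoints agree, forcing equality throughout; since $i$ was arbitrary, the series themselves are equal. For (3), the coefficient identity
\[
(a(t)c(t))_i=\sum_{j+k=i}a_jc_k
\]
together with $c_k\geq 0$ and $a_j\leq b_j$ gives term-by-term inequalities summing to $(a(t)c(t))_i\leq (b(t)c(t))_i$.

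The only part requiring a bit of care is (4): given (3), I only need to produce a single index where the coefficients of $a(t)c(t)$ and $b(t)c(t)$ differ. The natural approach is to let $i_0$ be the smallest index with $a_{i_0}<b_{i_0}$, and $k_0$ the smallest index with $c_{k_0}>0$, and then to examine the $(i_0+k_0)$-th coefficient of $(b(t)-a(t))c(t)$. Terms with $k<k_0$ vanish because $c_k=0$, terms with $k>k_0$ satisfy $j<i_0$ so $b_j-a_j\geq 0$ and $c_k\geq 0$, and the unique remaining term $(b_{i_0}-a_{i_0})c_{k_0}$ is strictly positive. Hence this coefficient is strictly positive, yielding $a(t)c(t)\neq b(t)c(t)$, which combined with (3) gives $a(t)c(t)<b(t)c(t)$.

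There is no real obstacle; the main thing to be careful about is (4), where the strict inequality must be traced to a specific coefficient rather than derived purely formally from (3). Everything else is a direct translation of the partial-order structure on $\NN$-indexed sequences of integers.
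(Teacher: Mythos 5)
Your proof is correct; the paper omits the proof of this lemma entirely (treating it as elementary), and your coefficient-wise verification is exactly the intended argument. The one point that genuinely needs care --- locating a specific coefficient of $(b(t)-a(t))c(t)$ that is strictly positive in part (4), via the lowest indices $i_0$ and $k_0$ where $b-a$ and $c$ are nonzero --- is handled correctly.
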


\begin{remark} \label{rem.dKH}  Let $a(t), b(t)\in \ZZ[[t]]$.   
\begin{enumerate} 
\item{} Since $\ZZ[[t]]$ is an integral domain, for $c(t)\in \ZZ[[t]]$ such that $c(t)\neq 0$, $a(t)=b(t)$ if and only if $a(t)c(t)=b(t)c(t)$.  
\item{} If $a(t), b(t)\in \ZZ[[t]]^+$, then $a(t)b(t)\in \ZZ[[t]]^+$.  
\item{} 
Even if $a(t)^{-1}\in \ZZ[[t]]$ exists, it is not always the case that $a(t)\in \ZZ[[t]]^+$ implies $a(t)^{-1}\in \ZZ[[t]]^+$.  For example, for $d\in \NN^+$, we write $\dfrac{1}{1-t^d}:=\sum_{i\in \NN}t^{di}\in \ZZ[[t]]^+$.  Note that $\dfrac{1}{1-t^d}\cdot (1-t^d)=1$ in $\ZZ[[t]]$ but $1-t^d\not\in \ZZ[[t]]^+$.  
\end{enumerate}
\end{remark} 

Let $K$ be a locally finite $\NN$-graded vector space.  Note that $K=0$ if and only if $H_K(t)=0$, and $K\neq 0$ if and only if $H_K(t)>0$.
Applying to $K:=\Ker (A(-d) \to A; a\mapsto fa)$,  the following result is easy to see. (One direction follows from \cite[Lemma 2.1]{CV1}).

\begin{lemma} \label{lem.nrHn}
Let $A$ be a locally finite $\NN$-graded algebra and $f\in A_d$ a homogeneous normal element.  
\begin{enumerate}
\item{} $f$ is regular if and only if $H_{A/(f)}(t)=(1-t^d)H_A(t)$. 
\item{} $f$ is not regular if and only if $H_{A/(f)}(t)>(1-t^d)H_A(t)$. 
\end{enumerate}
\end{lemma}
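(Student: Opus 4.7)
The plan is to extract both statements from a single Hilbert-series identity coming from a four-term exact sequence. Since $f\in A_d$ is normal, $fA=Af$, so the two-sided ideal $(f)$ coincides with $fA$ and the quotient $A/(f)$ equals $A/fA$. Set $K:=\Ker(A(-d)\xrightarrow{\cdot f}A)$ as suggested just before the lemma; then $K$ is the graded submodule $\{a\in A\mid fa=0\}$ with the shift absorbed into the source.

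Next I would write down the exact sequence of locally finite graded vector spaces
\begin{equation*}
0\to K(-d)\to A(-d)\xrightarrow{\cdot f} A\to A/(f)\to 0.
\end{equation*}
Taking alternating sums of Hilbert series (valid because each graded piece is finite-dimensional) gives
\begin{equation*}
H_{A/(f)}(t)=(1-t^d)H_A(t)+t^d H_K(t).
\end{equation*}
This is the master identity from which both parts fall out.

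Finally I would read off each equivalence. For (1): $f$ is regular iff $K=0$ iff $H_K(t)=0$ iff the correction term $t^dH_K(t)$ vanishes iff $H_{A/(f)}(t)=(1-t^d)H_A(t)$. For (2): $f$ is not regular iff $K\neq 0$ iff $H_K(t)>0$ in the sense of Definition \ref{q.abH}; by Remark \ref{rem.dKH}(2) (or a direct check that multiplication by $t^d$ preserves coefficientwise positivity), this is equivalent to $t^dH_K(t)>0$, i.e.\ $H_{A/(f)}(t)>(1-t^d)H_A(t)$.

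There is no real obstacle here; the only subtle point is making sure that the normality of $f$ is used correctly so that $A/(f)=A/fA$ (otherwise the quotient would be taken modulo the larger two-sided ideal $AfA$, and the Hilbert-series identity would fail). The appeal to $K\neq 0\iff H_K(t)>0$ uses only that $K$ is locally finite and $\NN$-graded, as noted immediately before the lemma.
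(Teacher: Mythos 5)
Your argument is correct and is essentially the paper's own proof: the paper merely remarks that the lemma "is easy to see" after setting $K:=\Ker(A(-d)\to A;\ a\mapsto fa)$ and noting that $K=0$ iff $H_K(t)=0$ and $K\neq 0$ iff $H_K(t)>0$, which is exactly the four-term exact sequence and master identity you write down. Your observation that normality is what guarantees $(f)=fA$, so that the cokernel of left multiplication by $f$ really is $A/(f)$, is the right point to flag.
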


The following lemma is well-known if $A=k[u_1, \dots, u_n]$.  

\begin{lemma} \label{lem.rHn}  Let $A$ be a locally finite $\NN$-graded algebra, and $f_i\in A_{d_i}, i=1, \dots, r$ homogeneous normal elements.  
Then $f_1, \dots, f_r$ is a regular sequence if and only if 
$$H_{A/(f_1, \dots, f_r)}(t)=(1-t^{d_1})\cdots (1-t^{d_r})H_A(t).$$
It follows that if $f_1, \dots, f_r$ is a regular sequence, then any permutation is also a regular sequence. 
\end{lemma}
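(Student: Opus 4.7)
The plan is direct, built around the short exact sequence machinery rather than a full induction. Writing $C_0 := A$ and $C_i := A/(f_1,\dots,f_i)$, the image $\bar f_i \in C_{i-1}$ remains a homogeneous normal element of degree $d_i$, so the four-term exact sequence
\[
0 \to L_i \to C_{i-1}(-d_i) \xrightarrow{\cdot \bar f_i} C_{i-1} \to C_i \to 0
\]
with $L_i := \ker(\cdot \bar f_i)$ yields the fundamental identity
\[
H_{C_i}(t) = (1-t^{d_i}) H_{C_{i-1}}(t) + H_{L_i}(t),
\]
and by Lemma \ref{lem.nrHn} the vanishing $L_i = 0$ is equivalent to regularity of $\bar f_i$ on $C_{i-1}$. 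The forward direction of the lemma is then immediate: if $f_1,\dots,f_r$ is regular, each $H_{L_i}(t) = 0$ and iterating the identity produces $H_{C_r}(t) = \prod_{i=1}^r (1-t^{d_i}) H_A(t)$.

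For the reverse direction, assume $H_{C_r}(t) = \prod_{i=1}^r (1-t^{d_i}) H_A(t)$. Using $H_{L_i}(t) \geq 0$ in the fundamental identity gives $(1-t^{d_i}) H_{C_{i-1}}(t) \leq H_{C_i}(t)$. The key observation is that although $(1-t^{d_i})$ itself has a negative coefficient, its inverse $(1-t^{d_i})^{-1} = \sum_{j\geq 0} t^{d_i j}$ lies in $\ZZ[[t]]^+$ by Remark \ref{rem.dKH}(3), so by Lemma \ref{lem.abc}(3) multiplying the inequality by $(1-t^{d_i})^{-1}$ preserves it, yielding $H_{C_{i-1}}(t) \leq (1-t^{d_i})^{-1} H_{C_i}(t)$. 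Since products of elements of $\ZZ[[t]]^+$ remain in $\ZZ[[t]]^+$ (Remark \ref{rem.dKH}(2)), the same trick lets me chain these to obtain
\[
H_A(t) = H_{C_0}(t) \leq (1-t^{d_1})^{-1} H_{C_1}(t) \leq \cdots \leq \prod_{i=1}^r (1-t^{d_i})^{-1} H_{C_r}(t) = H_A(t),
\]
where the final equality is the standing hypothesis.

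Lemma \ref{lem.abc}(2) then forces every inequality in this chain to be an equality, so $(1-t^{d_i}) H_{C_{i-1}}(t) = H_{C_i}(t)$ for each $i$, which via the fundamental identity gives $H_{L_i}(t) = 0$ and hence $L_i = 0$; thus each $\bar f_i$ is regular on $C_{i-1}$, proving $f_1,\dots,f_r$ is a regular sequence. The permutation claim is then automatic, since the condition $H_{A/(f_1,\dots,f_r)}(t) = \prod_{i=1}^r (1-t^{d_i}) H_A(t)$ depends only on the ideal $(f_1,\dots,f_r)$ and on the multiset $\{d_1,\dots,d_r\}$, both of which are permutation-invariant.

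The main delicate point I foresee is precisely the asymmetry in multiplying $\ZZ[[t]]$-inequalities: one may multiply by $(1-t^d)^{-1}$ but not by $(1-t^d)$ itself. A naive attempt to establish $H_{C_i}(t) \geq \prod_{j=1}^i (1-t^{d_j}) H_A(t)$ directly by iterating the fundamental identity breaks down for exactly this reason, and indeed the purported lower bound is false in general. Routing the argument through the reciprocal converts the question into an upper bound on $H_A(t)$ that is pinched against itself by the standing hypothesis at $i=r$, and Lemma \ref{lem.abc}(2) collapses the entire chain at once.
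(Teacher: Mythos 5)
Your argument is correct and is essentially the paper's proof: your chain of inequalities $H_A(t)\leq (1-t^{d_1})^{-1}H_{C_1}(t)\leq\cdots$ consists of exactly the quantities $a_i(t):=\prod_{j\leq i}(1-t^{d_j})^{-1}H_{A/(f_1,\dots,f_i)}(t)$ that the paper compares, using the same key point that one multiplies by $(1-t^{d})^{-1}\in\ZZ[[t]]^+$ rather than by $1-t^d$. The only difference is presentational: the paper argues contrapositively (non-regularity forces some strict inequality $a_{i-1}(t)<a_i(t)$, hence $a_0(t)\neq a_r(t)$), while you pinch the chain against the hypothesis directly.
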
 

\begin{proof} 
If $f_1, \dots, f_r$ is a regular sequence, then 
$$H_{A/(f_1, \dots, f_{i-1}, f_{i})}(t)=(1-t^{d_i})H_{A/(f_1, \dots, f_{i-1})}(t)$$ 
for every $i=1, \dots , r$ by Lemma \ref{lem.nrHn} (1), so $$H_{A/(f_1, \dots, f_r)}(t)=(1-t^{d_1})\cdots (1-t^{d_r})H_A(t).$$

For the converse, define 
$$a_i(t):=\frac{1}{1-t^{d_1}}\cdots \frac{1}{1-t^{d_i}}H_{A/(f_1, \dots, f_i)}(t)$$
for $i=0, \dots, r$.  If $f_1, \dots, f_r$ is not a regular sequence, then $a_i(t)\geq a_{i-1}(t)$ for every $i=1, \dots, r$, and  $a_i(t)>a_{i-1}(t)$ for some $i=1, \dots, r$ by Lemma \ref{lem.nrHn} and Lemma \ref{lem.abc} (3) (4), so $a_0(t)\neq a_r(t)$ by Lemma \ref{lem.abc} (2), hence 
\begin{align*}
& H_{A/(f_1, \dots, f_r)}(t)= (1-t^{d_1})\cdots (1-t^{d_r})a_r(t) \\
\neq  &  (1-t^{d_1})\cdots (1-t^{d_r})a_0(t)=(1-t^{d_1})\cdots (1-t^{d_r})H_A(t). \qedhere
\end{align*}
\end{proof}

\section{Dualities}

The duality between $\sC_{n, 0}$ and $\sB_{n, n}$ 
is well-known (see \cite{AL}, \cite[Proposition 7]{LeB}).  This duality is even extended to graded skew Clifford algebras in \cite[Theorem 4.2]{CV1}.  
In this section, we will show that there are dualities between $\sC_{n, r}$ and $\sB_{n, n-r}$ for all $r=0, \dots, n$.  To show it, we will explicitly compute the center of the graded Clifford algebra $C(F)$ in Section 3.3.

\subsection{Quadratic dual} \label{subsec.qud}





The notion of quadratic dual is useful to study quadratic algebras, in particular, Koszul algebras (known as Koszul duality). 

Let $V$ be a vector space (always finite dimensional!).  For a subspace $W\subset V$, we define $W^{\perp}:=\{\phi\in V^*\mid \phi(w)=0\; \forall w\in W\}$.   
For a quadratic algebra $A=T(V)/(R)$ where $R\subset V\otimes V$ is a subspace, we define the {\it quadratic dual} of $A$ by $A^!:=T(V^*)/(R^{\perp})$.  In the sequel, we often identify $V^{**}$ with $V$ and $R^{\perp\perp}$ with $R$, so that 
$A\mapsto A^!$ gives a bijection from the set of isomorphism classes of quadratic algebras to itself such that $(A^!)^!=A$.

A connected graded algebra $A$ finitely generated in degree 1 over $k$ is called a {\it Koszul algebra} if the trivial module $k_A$ has a free resolution
$$
\xymatrix{
\cdots \ar[r] & P^i \ar[r] & \cdots \ar[r] & P^1 \ar[r] & P^0 \ar[r] & k_A \ar[r] & 0
}
$$
where $P^i$ is a graded free module generated in degree $i$ for each $i \geq 0$.

\begin{lemma}[{\cite[Theorem 5.9]{Sm}, \cite[Theorem 1.2]{ST}}] \label{lem.Kos}
 Let $A$ be a quadratic algebra. Then the following are equivalent.
\begin{enumerate}
\item{} $A$ is Koszul.
\item{} $A^!$ is Koszul.
\item{} $A^!$ is isomorphic to the Yoneda algebra $\uExt_A^*(k,k)$ as graded algebras.
\end{enumerate}
Further, if $A$ is Koszul, then the following holds.
\begin{enumerate}
\item[(i)] $H_{A^!}(t)=1/H_A(-t)$.  
\item[(ii)] $A/(f)$ is a Koszul algebra for every regular normal element $f\in A_2$.
\end{enumerate}
\end{lemma}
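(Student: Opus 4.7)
This lemma is a consolidation of classical Koszul duality theorems, so my plan is to set up the Koszul complex of a quadratic algebra and derive each statement from its exactness. For $A = T(V)/(R)$, I would introduce the Koszul complex
$$
K^\bullet(A): \; \cdots \longrightarrow A \otimes (A^!_i)^* \longrightarrow A \otimes (A^!_{i-1})^* \longrightarrow \cdots \longrightarrow A \otimes k \longrightarrow k \longrightarrow 0,
$$
whose $i$th term is free of rank $\dim A^!_i$, generated in internal degree $i$, with differential induced by the canonical element of $V \otimes V^*$ via the inclusion $A^!_i \hookrightarrow (V^*)^{\otimes i}$. The basic observation, essentially rephrasing the definition, is that $A$ is Koszul if and only if $K^\bullet(A)$ is a minimal free resolution of $k_A$: any connected graded algebra admits a minimal free resolution of $k$ whose $i$th term lies in degrees $\geq i$, and $K^\bullet(A)$ saturates this lower bound exactly when $A^!$ captures all the syzygies.

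For $(1) \Leftrightarrow (2)$, I would invoke the self-duality of the construction: linearly dualizing $K^\bullet(A^!)$ yields (up to reindexing) $K^\bullet(A)$, so exactness of one is equivalent to exactness of the other. For $(1) \Leftrightarrow (3)$, the plan is to exhibit a canonical graded algebra homomorphism $\phi: A^! \to \uExt^*_A(k,k)$ which is always injective and identifies $A^!_i$ with the ``linear part'' of $\Ext^i_A(k,k(i))$. When $A$ is Koszul, applying $\uHom_A(-,k)$ to the Koszul resolution computes $\uExt^*_A(k,k) \cong A^!$ as graded vector spaces, and Yoneda multiplication matches multiplication on $A^!$ by construction, so $\phi$ is an isomorphism. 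Conversely, an isomorphism $\phi$ forces $\Ext^i_A(k,k(j)) = 0$ for $j \neq i$, i.e., the minimal resolution of $k_A$ is linear, so $A$ is Koszul.

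For (i), I would take the Euler characteristic of the exact complex $K^\bullet(A)$: since $H_{A \otimes (A^!_i)^*}(t) = (\dim A^!_i)\, t^i\, H_A(t)$, the alternating sum yields
$$
1 \;=\; H_A(t) \sum_{i \geq 0}(-1)^i (\dim A^!_i)\, t^i \;=\; H_A(t)\, H_{A^!}(-t).
$$
For (ii), I would build a free resolution of $k$ over $B := A/(f)$ by splicing the Koszul resolution of $k_A$ with the two-term $A$-free resolution $0 \to A(-2) \xrightarrow{\cdot f} A \to B \to 0$, which is exact because $f$ is regular of degree $2$; tensoring the Koszul resolution of $k_A$ over $A$ with $B$ (using regularity to control the change-of-rings spectral sequence, and normality to preserve the $B$-module structure) produces a free resolution of $k_B$ whose $i$th term is generated in degree $i$, establishing Koszulness.

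The main obstacle in a self-contained proof would be verifying (a) the self-duality exchanging $K^\bullet(A)$ and the linear dual of $K^\bullet(A^!)$, and (b) the compatibility of the multiplication on $A^!$ with Yoneda multiplication on $\uExt^*_A(k,k)$. Both are standard but delicate; for this paper they can be quoted from Priddy's original treatment or from Polishchuk--Positselski rather than reproved.
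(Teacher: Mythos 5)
The paper offers no proof of this lemma: it is quoted as known background, with the equivalences and (i) attributed to \cite[Theorem 5.9]{Sm} and (ii) to \cite[Theorem 1.2]{ST}. So there is nothing in the paper to compare your argument against; what you have written is a sketch of the standard proofs from the cited literature, and as an outline it is essentially correct. In particular, your key claim for $(1)\Leftrightarrow(3)$ is right: computing with the bar complex shows that for any quadratic algebra the diagonal piece $\Ext^{n}_A(k,k(n))$ is canonically $A^!_n$ and the Yoneda product restricts to the product of $A^!$, so the canonical map $A^!\to\uExt^*_A(k,k)$ is injective with image the diagonal subalgebra, and Koszulness is exactly surjectivity (an abstract isomorphism as in (3) forces this by a dimension count). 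The self-duality argument for $(1)\Leftrightarrow(2)$ and the Euler-characteristic computation for (i) are also the standard ones and are fine.

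The one step that is not correct as literally written is in (ii): applying $-\otimes_A B$ to the Koszul resolution of $k_A$ does \emph{not} yield a resolution of $k_B$, because that complex computes $\Tor^A_*(k,B)$, which is nonzero in homological degree $1$ (it contains a copy of $k(-2)$ coming from $0\to A(-2)\xrightarrow{\cdot f}A\to B\to 0$). The actual construction must iterate: one assembles a double complex whose columns are internal-degree shifts of $K^\bullet(A)\otimes_A B$ by multiples of $2$, connected by maps induced by $f$ (normality of $f$ is what makes these $B$-linear), or equivalently one runs the change-of-rings spectral sequence built from $\uExt^*_A(B,k)\cong k\oplus k(2)[-1]$ and inducts on cohomological degree to see that $\uExt^p_B(k,k)$ stays concentrated on the diagonal. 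That is precisely the content of \cite[Theorem 1.2]{ST}, so your decision to quote rather than reprove the delicate points is consistent with what the paper itself does; just be aware that the sentence ``tensoring the Koszul resolution with $B$ produces a free resolution of $k_B$'' conceals the real work.
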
 

We recall the following result.

\begin{lemma}[{\cite[Theorem 5.11]{Sm}}] \label{lem.qpaKos}
Every quantum polynomial algebra is Koszul. 
\end{lemma}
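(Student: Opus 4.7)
The plan is to prove that the trivial right module $k_S$ admits a linear minimal graded free resolution, which is the standard characterization of Koszulity. All three conditions in Definition \ref{def.qpa} contribute.

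First, by condition (1), $k_S$ has a minimal graded free resolution
$$
0\to P^n\to\cdots\to P^1\to P^0\to k_S\to 0,
$$
with $P^0=S$ and $P^i=\bigoplus_j S(-j)^{\beta_{ij}}$. Minimality of the differentials, combined with $P^0=S$, inductively forces $\beta_{ij}=0$ for $j<i$: the generators of $P^i$ must appear in strictly larger degrees than those of $P^{i-1}$. Next, I would apply $\uHom_S(-,S)$ to this resolution and use condition (2) to conclude that the dualized complex, after reindexing, is a minimal graded free resolution of the shifted trivial left module ${}_Sk(n)$. Repeating the minimality argument on this left-side resolution yields the AS-Gorenstein symmetry $\beta_{ij}=\beta_{n-i,n-j}$, which combined with the previous vanishing gives $\beta_{ij}=0$ also for $j>i$. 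Thus all Betti numbers are concentrated on the diagonal.

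Finally, the Euler characteristic identity for Hilbert series,
$$
1=H_k(t)=H_S(t)\sum_{i}(-1)^i\beta_{ii}t^i,
$$
combined with condition (3), $H_S(t)=(1-t)^{-n}$, gives $\sum_{i}(-1)^i\beta_{ii}t^i=(1-t)^n$. Hence $\beta_{ii}=\binom{n}{i}$, so $P^i=S(-i)^{\binom{n}{i}}$ is generated purely in degree $i$, the resolution is linear, and $S$ is Koszul.

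The main obstacle is the middle step: justifying that $\uHom_S(-,S)$ applied to a minimal graded free resolution of $k_S$ really produces a minimal graded free resolution of ${}_Sk(n)$. One has to track the switch from right to left module structure carefully and verify that the entries of the dualized differentials still lie in $S_{\geq 1}$, which is where minimality of the original resolution pays off. Without this preservation of minimality the symmetry $\beta_{ij}=\beta_{n-i,n-j}$ cannot be extracted, and the Koszul conclusion does not follow from the Hilbert-series identity alone.
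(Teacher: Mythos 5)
The paper does not prove this lemma; it is quoted directly from \cite[Theorem 5.11]{Sm}, so there is no internal proof to compare against. Your argument is correct, self-contained, and is essentially the standard proof of that cited result: minimality of the resolution of $k_S$ gives $\beta_{ij}=0$ for $j<i$; dualizing by $\uHom_S(-,S)$ and using that the transposed differentials still have entries in $S_{\geq 1}$ shows the dual complex is a \emph{minimal} free resolution of ${}_Sk(n)$, and the same degree bound applied there (using that the Gorenstein shift is exactly $n$) gives $\beta_{ij}=0$ for $j>i$, whence the resolution is linear. You correctly flag the preservation of minimality under dualization as the crux. One small observation: as your own argument shows, conditions (1) and (2) of Definition \ref{def.qpa} already force Koszulity (the shift $n$ in condition (2) is what rules out examples such as cubic three-dimensional regular algebras, whose Gorenstein parameter is $4$); condition (3) is only used at the end to identify $\beta_{ii}=\binom{n}{i}$, which is not needed for the Koszul conclusion itself.
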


\begin{remark} \label{rem.BCKos}
By Lemma \ref{lem.rHn}, Lemma \ref{lem.Kos}, and Lemma \ref{lem.qpaKos}, 
every $A\in \sB_{n, r}\cup \sC_{n, r}$ is a Koszul algebra with $H_A(t)=(1-t^2)^r/(1-t)^n$. 
\end{remark}

Recall that a finite dimensional algebra $A$ is called a Frobenius algebra if there is a nondegenerate associative bilinear form $(-. -): A \times A \to k$. We will call a finite dimensional algebra $A$ {\it graded Frobenius} if $A$ is connected graded and Frobenius.  Note that a finite dimensional commutative algebra is Frobenius if and only if it is self-injective (Gorenstein of Krull dimension 0).

\begin{lemma} \label{lem.Sm} Let $S$ be a noetherian connected graded algebra.  Then $S$ is an $n$-dimensional quantum polynomial algebra if and only if $S^!$ is a graded Frobenius Koszul algebra such that $H_{S^!}(t)=(1+t)^n$. 
\end{lemma}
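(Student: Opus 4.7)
The plan is to derive both implications from Koszul duality (Lemma \ref{lem.Kos}) together with the classical correspondence---essentially due to Smith---between the Artin--Schelter regularity of a Koszul algebra and the graded Frobenius property of its quadratic dual.

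For the ``only if'' direction, suppose $S$ is an $n$-dimensional quantum polynomial algebra. By Lemma \ref{lem.qpaKos}, $S$ is Koszul, so by Lemma \ref{lem.Kos} the algebra $S^!$ is also Koszul and $H_{S^!}(t) = 1/H_S(-t) = (1+t)^n$. In particular $S^!$ is a finite-dimensional connected graded algebra concentrated in degrees $0, 1, \dots, n$ with $\dim(S^!)_n = 1$. Under the identification $S^! \cong \uExt^*_S(k,k)$ from Lemma \ref{lem.Kos}, the AS-Gorenstein condition---namely $\uExt^n_S(k,S) \cong k(n)$ and $\uExt^i_S(k,S) = 0$ for $i \neq n$---forces, via the self-duality of the Koszul bimodule complex, an isomorphism $(S^!)_i \cong ((S^!)_{n-i})^*$ that intertwines the Yoneda product with the multiplication pairing. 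Hence $(S^!)_i \otimes (S^!)_{n-i} \to (S^!)_n \cong k$ is perfect, and $S^!$ is graded Frobenius.

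For the ``if'' direction, suppose $S^!$ is a graded Frobenius Koszul algebra with $H_{S^!}(t) = (1+t)^n$. Then $S \cong (S^!)^!$ is Koszul by Lemma \ref{lem.Kos}, and $H_S(t) = 1/H_{S^!}(-t) = (1-t)^{-n}$. Because $S$ is Koszul, the minimal graded free resolution of $k_S$ has $i$-th term of rank $\dim(S^!)_i$; since $(S^!)_i = 0$ for $i > n$ and $(S^!)_n \neq 0$, this resolution has length exactly $n$, so $\gldim S = n$. Applying $\uHom_S(-, S)$ to the Koszul resolution produces a complex computing $\uExt^*_S(k, S)$; the Frobenius identification $(S^!)_i \cong ((S^!)_{n-i})^*$ turns this dual complex (after a shift by $n$) back into the Koszul resolution of $k_S$, yielding $\uExt^n_S(k, S) \cong k(n)$ and $\uExt^i_S(k, S) = 0$ for $i \neq n$.

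The main technical step in both directions is the bookkeeping that identifies the Yoneda or Frobenius pairing on $S^!$ with the self-duality of the Koszul bimodule complex of $S$. This is a standard ingredient in the Koszul duality theory for Artin--Schelter regular algebras, so the argument reduces to careful tracking of degree shifts rather than any genuinely new input.
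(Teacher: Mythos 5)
Your argument is correct and follows essentially the same route as the paper: the paper's proof is a one-line citation of Lemma \ref{lem.Kos}, Lemma \ref{lem.qpaKos}, and Smith's Theorem 5.10, and the ``self-duality of the Koszul bimodule complex'' step you sketch in both directions is precisely the content of that cited theorem (the equivalence, for Koszul algebras, between the AS-Gorenstein condition on $S$ and the graded Frobenius property of $S^!$). You have simply unpacked the citation rather than invoking it, which is fine.
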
  

\begin{proof} 
This follows from Lemma \ref{lem.Kos}, Lemma \ref{lem.qpaKos} and \cite[Theorem 5.10]{Sm}. 
\end{proof}


\begin{definition} 
For a sequence $F=(F_1, \dots,  F_n)\in M_n(k)^{\times n}$ of symmetric matrices (not necessarily linearly independent), we define the graded algebra 
$$S^F := k\<x_1, \dots, x_n\>/(x_ix_j+x_jx_i-\sum _{m=1}^n(F_m)_{ij}x_m^2)_{1\leq i<j\leq n}.$$
We define the normalization $\overline F$ of $F$ by 
$$(\overline F_m)_{ij}=\begin{cases} 2 & \textnormal { if } i=j=m \\
0  & \textnormal { if } i=j\neq m \\
(F_m)_{ij}& \textnormal { if } i\neq j.\end{cases}$$ 
\end{definition} 

\begin{remark} Clearly, $S^F=S^{\overline F}$.  Again, the range of subscripts is important and sensitive in this paper. Since $F_m$ are symmetric for $1\leq m\leq n$, if $F$ is normalized, then we may write 
$$S^F := k\<x_1, \dots, x_n\>/(x_ix_j+x_jx_i-\sum _{m=1}^n(F_m)_{ij}x_m^2)_{1\leq i, j\leq n}.$$
\end{remark}

\begin{proposition} \label{prop.BSC} 
Let $F=(F_1, \dots,  F_n)\in M_n(k)^{\times n}$ be a sequence of symmetric matrices. 
\begin{enumerate}
\item{} $(S^F)^!=B(\overline F)$.  
\item{} If $F$ is normalized, then $B(F)^!=S^F$.
\item{} If 
$F$ is linearly independent and 
$C(F)$ is quadratic, 
then $B(F)^!\cong C(F)$.
\end{enumerate} 
\end{proposition}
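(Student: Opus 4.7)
I treat (1) and (2) as a single quadratic-duality computation in $V\otimes V$ and its dual, and then bootstrap (3) from (2) via the normalization procedure.

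\textbf{Sketch of (1) and (2).} Write $S^F = T(V)/(R)$ with $V=\langle x_1,\dots,x_n\rangle$ and $R$ the $\binom{n}{2}$-dimensional span of
$$
r_{ij}=x_i\otimes x_j + x_j\otimes x_i - \sum_{m=1}^{n}(F_m)_{ij}\,x_m\otimes x_m,\qquad i<j;
$$
linear independence follows from inspecting the $x_i\otimes x_j$ coefficients. Similarly $B(\overline F) = T(V^*)/(R')$, where $R'$ is spanned by the skew-symmetric commutators $u_i\otimes u_j - u_j\otimes u_i$ ($i<j$) together with the symmetric quadrics $\overline f_m = \sum_{i,j}(\overline F_m)_{ij}u_i\otimes u_j$. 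The commutators span a $\binom{n}{2}$-dimensional subspace of the skew-symmetric part of $V^*\otimes V^*$, while the $\overline f_m$ are linearly independent in the symmetric part, since the normalization $(\overline F_m)_{ii}=2\delta_{im}$ makes each $\overline f_m$ the unique member of the list with a nonzero coefficient at $u_m\otimes u_m$; hence $\dim R' = \binom{n}{2}+n$. A direct unpacking of the pairing $\langle u_k\otimes u_l,x_p\otimes x_q\rangle = \delta_{kp}\delta_{lq}$ gives $\langle R',R\rangle = 0$: the commutators pair to zero with the symmetric $r_{ij}$, and $\langle\overline f_p,r_{ij}\rangle = (\overline F_p)_{ij}+(\overline F_p)_{ji}-\sum_m(F_m)_{ij}(\overline F_p)_{mm} = 2(F_p)_{ij}-2(F_p)_{ij}=0$. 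Since $\dim(R')^{\perp} = n^2-\binom{n}{2}-n = \binom{n}{2}=\dim R$, we conclude $R = (R')^{\perp}$, hence $R^{\perp}=R'$, so $(S^F)^! = B(\overline F)$. Specializing to normalized $F$, where $\overline F = F$, and dualizing once more yields $B(F)^!=S^F$, which is (2).

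\textbf{Sketch of (3).} By Lemma \ref{lem.clP2}, choose a normalized $F'$ with $F\sim_{st}F'$; Lemma \ref{lem.clP} then gives $C(F)\cong C(F')$ and Lemma \ref{lem.bP}(3) gives $B(F)\cong B(F')$, reducing the claim to $C(F')\cong S^{F'} = B(F')^!$ (the last equality by (2)). Normalization makes the $(m,m)$ defining relation of $C(F')$ read $2x_m^2 = 2y_m$, so $y_m = x_m^2$; consequently the map $\varphi\colon S^{F'}\to C(F')$ sending $x_i\mapsto x_i$ is well-defined and surjective, as the $(i,j)$ relations of $C(F')$ with $i<j$ become, after substitution, precisely the defining relations of $S^{F'}$. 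To see $\varphi$ is an isomorphism, apply Lemma \ref{lem.HO} to $C(F')\cong C_R(\cF')$ over $R=k[y_1,\dots,y_n]$ (Lemma \ref{lem.c1}) to compute $\dim C(F')_2 = n+\binom{n}{2} = \binom{n+1}{2}$; this equals $\dim S^{F'}_2 = n^2-\binom{n}{2}$, so $\varphi_2$ is bijective. Since $C(F')$ is quadratic by hypothesis, its defining ideal in $T(V)$ is generated in degree $2$, where it must then coincide with the defining ideal of $S^{F'}$; therefore $\varphi$ is an isomorphism.

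\textbf{Main obstacle.} The only nontrivial step is the last one: extracting from the assumption that $C(F')$ is quadratic the conclusion that its relation ideal in $T(V)$ is generated in degree $2$ by precisely the $r_{ij}$, so no extra quadratic relation can appear after substituting $y_m = x_m^2$. The rest is linear algebra in $V\otimes V$, a dimension count, and bookkeeping with the normalizing equivalences of Section 2.
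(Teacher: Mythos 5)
Your proposal is correct and follows essentially the same route as the paper: parts (1)--(2) by checking $\langle R',R\rangle=0$ and matching dimensions in $V^*\otimes V^*$, and part (3) by reducing to a normalized representative via $\sim_{st}$ and identifying $C(F')$ with $S^{F'}$ using quadraticity. The only difference is that you make explicit the dimension count $\dim C(F')_2=n+\binom{n}{2}=\dim S^{F'}_2$ via Lemma \ref{lem.HO}, which the paper's terser "since $C(F)$ is quadratic" step leaves implicit.
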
 

\begin{proof}   
(1) Write $S^F=T(V)/(R)$ where $R\subset V\otimes V$ and $B(\overline F)=T(V^*)/(W)$ where $W\subset V^*\otimes V^*$.  We denote by $\{x_1, \dots, x_n\}$ a basis of $V$, and by $\{u_1, \dots, u_n\}$ its dual basis of $V^*$.   Since $R$ is generated by linearly independent elements $\{x_ix_j+x_jx_i-\sum _{m=1}^n(F_m)_{ij}x_m^2\}_{1\leq i<j\leq n}$, we have $\dim R=n(n-1)/2$.  Since 
$$\{\sum_{1\leq i, j\leq n}(\overline F_m)_{ij}u_iu_j\}_{1\leq m\leq n}=\{2u_m^2+2\sum_{1\leq i<j\leq n}(F_m)_{ij}u_iu_j\}_{1\leq m\leq n},$$ 
and $W$ is generated by linearly independent elements 
$$
\{u_m^2+\sum_{1\leq i<j\leq n}(F_m)_{ij}u_i u_j\}_{1\leq m\leq n}\bigcup \{u_i u_j - u_j u_i\}_{1\leq i<j\leq n},
$$ 
we have $\dim W=n(n+1)/2$.  It is easy to check that $W\subset R^{\perp}$. Since 
$$\dim R^{\perp}=\dim (V\otimes V)-\dim R=n^2-n(n-1)/2=n(n+1)/2=\dim W, $$
we have $R^{\perp}=W$, so 
$$(S^F)^!=T(V^*)/(R^{\perp})=T(V^*)/(W)=B(\overline F).$$

(2) If $F$ is normalized, then $\overline F=F$, so $B(F)^!=B(\overline F)^!= S^F$ by (1). 

(3) 
If $F$ is normalized, then $2x_j^2=\sum _{m=1}^n(F_m)_{jj}y_m=2y_j$ for every $j=1, \dots, n$ in $C(F)$.  Since $C(F)$ is quadratic, 
\begin{align*}
C(F) & =k\<x_1, \dots, x_n, y_1, \dots, y_n\>/(x_ix_j+x_jx_i-\sum _{m=1}^n(F_m)_{ij}y_m)_{1\leq i, j\leq n} \\
& =k\<x_1, \dots, x_n\>/(x_ix_j+x_jx_i-\sum _{m=1}^n(F_m)_{ij}x^2_m)_{1\leq i, j\leq n} \\
& =: S^F \cong B(F)^!
\end{align*}
by (2). In general, since $F$ is linearly independent, there exists a normalized $F'$ such that $F\sim _{st}F'$ by Lemma \ref{lem.clP2}, so $B(F)^!\cong B(F')^! = C(F')\cong C(F)$ by Lemma \ref{lem.bP} and Lemma \ref{lem.clP}. 
\end{proof} 

\begin{corollary}
If two graded Clifford algebras $C(F),C(F')$ are quadratic, then $C(F) \cong C(F')$ if and only if $F \sim_{st} F'$.
\end{corollary}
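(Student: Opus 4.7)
The forward direction is already done: Lemma \ref{lem.clP} says $F \sim_{st} F'$ implies $C(F) \cong C(F')$ without any hypothesis on quadraticity. So the content of the corollary is the converse. My plan for the converse is to route everything through the quadratic dual and use Proposition \ref{prop.BSC}(3).

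First, I would recall that by Definition \ref{def.gcliffod} the sequences $F$ and $F'$ defining $C(F),C(F')$ are required to consist of linearly independent symmetric matrices, so Proposition \ref{prop.BSC}(3) applies to both and gives $C(F) \cong B(F)^!$ and $C(F') \cong B(F')^!$ as graded algebras.

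Next, assuming $C(F) \cong C(F')$, I would combine these isomorphisms to get $B(F)^! \cong B(F')^!$ as quadratic algebras. The key step is that the quadratic-dual construction $A \mapsto A^!$ is a well-defined operation on isomorphism classes of quadratic algebras: any graded algebra isomorphism $\phi:A \to A'$ is determined by its degree-1 part $\phi_1: V \to V'$, which must send the relation space $R \subset V \otimes V$ onto $R' \subset V' \otimes V'$; dualizing $\phi_1$ then sends $(R')^\perp$ to $R^\perp$, giving an isomorphism $(A')^! \cong A^!$. Applying this to $B(F)^! \cong B(F')^!$ and using $(A^!)^! = A$ (noted in Section \ref{subsec.qud}) yields $B(F) \cong B(F')$ as graded algebras.

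Finally, I would invoke Lemma \ref{lem.bP}(3), which states exactly that $B(F) \cong B(F')$ is equivalent to $F \sim_{st} F'$, completing the converse.

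I do not expect a main obstacle here: the argument is a three-line chain of implications, and the only point requiring any care is the functoriality of $(-)^!$ on isomorphisms, which is a standard and essentially formal consequence of the definition of quadratic dual via the perpendicular subspace. All the real work has been packaged into Proposition \ref{prop.BSC}(3) and Lemma \ref{lem.bP}(3).
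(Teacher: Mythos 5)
Your proof is correct and follows essentially the same route as the paper, whose proof is simply to cite Lemma \ref{lem.bP}~(3) and Proposition \ref{prop.BSC}~(3); you have just filled in the routine details, including the functoriality of $(-)^!$ on isomorphisms of quadratic algebras. No gaps.
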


\begin{proof}
This follows from Lemma \ref{lem.bP} (3) and Proposition \ref{prop.BSC} (3).
\end{proof}

\subsection{Derivation quotient algebras} 

It was shown in \cite[Proposition 3.4]{H} that every 3-dimensional Clifford quantum polynomial algebra is a derivation quotient algebra of a symmetric superpotential.  This fact was essentially used to classify noncommutative conics in \cite{HMM}.  In this subsection, we will extend this fact to higher dimensional cases.

\begin{definition}
Let $V$ be a vector space, $W\subset V^{\otimes n}$ a subspace, $w\in V^{\otimes n}$, and $i\geq 0$.  
\begin{enumerate}
\item{} We define
\begin{align*}
& \partial W: = \{(\psi\otimes \id^{\otimes n-1})(w) \mid \psi \in V^*, w \in W \}, \\
& \partial^{i+1} W: =  \partial (\partial^{i} W), \\
& \cD(W,i) : = T(V)/(\partial^{i} W).
\end{align*}
\item{} 
In particular, we define
$$
\cD(w,i): = \cD(kw,i).  
$$
We call $\cD(w,i)$ the {\it $i$-th order derivation quotient algebra} of $w$.  
\item{} We call $w$ a {\it superpotential} if $\phi(w)=w$ where $\phi:V^{\otimes n}\to V^{\otimes n}$ is a linear map defined by 
$$\phi(v_1\otimes v_2\otimes \cdots v_{n-1}\otimes v_n)=v_n\otimes v_1\otimes \cdots \otimes v_{n-2}\otimes v_{n-1}.$$

\end{enumerate}
\end{definition}

Let $\frak S_n$ be the symmetric group of degree $n > 0$. Clearly, every element in 
$
{\rm Sym}^n(V) : = \{ w \in  V^{\otimes n}  \mid \theta (w) = w, \ \forall \theta \in \frak S_n \}
$ 
is a superpotential. The following result is a special case of \cite[Theorem 11]{DV}.

\begin{theorem}[Dubois-Violette]
Every $n$-dimensional quantum polynomial algebra is isomorphic to some $(n-2)$-order derivation quotient algebra. 
\end{theorem}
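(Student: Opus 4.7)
The plan is to produce, for a given $n$-dimensional quantum polynomial algebra $S = T(V)/(R)$, a canonical (up to scalar) element $w \in V^{\otimes n}$ and then verify that $\partial^{n-2}(kw) = R$; this exhibits $S$ as the $(n-2)$-order derivation quotient $\cD(w, n-2)$.

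The starting point is Lemma \ref{lem.Sm}, which tells us that $S^! = T(V^*)/(R^\perp)$ is a graded Frobenius Koszul algebra with Hilbert series $(1+t)^n$. In particular $\dim_k S^!_n = 1$, and a choice of isomorphism $S^!_n \cong k$ turns the multiplication map $\mu_n \colon (V^*)^{\otimes n} \twoheadrightarrow S^!_n \cong k$ into a linear functional on $(V^*)^{\otimes n}$, i.e.\ an element $w \in V^{\otimes n}$. This $w$ is the desired (possibly twisted) superpotential.

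The heart of the argument is to identify $\partial^{n-2}(kw) \subseteq V \otimes V$ with $R$. By definition $\partial^{n-2}(kw)$ is the image of
\[
\Psi \colon (V^*)^{\otimes n-2} \to V \otimes V, \qquad \psi \mapsto (\psi \otimes \id \otimes \id)(w),
\]
so I will compute its annihilator $(\im \Psi)^\perp \subseteq (V^*)^{\otimes 2}$. Unwinding the definitions, $\theta \in (\im\Psi)^\perp$ iff $\langle \psi \otimes \theta,\, w\rangle = 0$ for every $\psi \in (V^*)^{\otimes n-2}$, iff the image $\bar{\theta} \in S^!_2$ satisfies $\bar{\psi} \cdot \bar{\theta} = 0$ in $S^!_n$ for every $\bar{\psi} \in S^!_{n-2}$. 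Here is where the Frobenius property of $S^!$ enters: the graded multiplication pairing $S^!_{n-2} \times S^!_2 \to S^!_n \cong k$ is nondegenerate, forcing $\bar\theta = 0$, i.e.\ $\theta \in R^\perp$. The reverse inclusion $R^\perp \subseteq (\im \Psi)^\perp$ is immediate, since any $\theta \in R^\perp$ produces $\psi \otimes \theta$ in the two-sided ideal $J_n := \sum_{i+j = n-2}(V^*)^{\otimes i}\otimes R^\perp \otimes (V^*)^{\otimes j} = \ker \mu_n$. Hence $(\im \Psi)^\perp = R^\perp$, so $\im \Psi = R$ and $S \cong \cD(w, n-2)$.

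The main obstacle is the bookkeeping in that Frobenius step: one must keep the multiplication map $\mu_n$, the functional $w$, the Koszul ideal generated by $R^\perp$, and the abstract pairings $S^!_i \times S^!_{n-i} \to S^!_n$ all consistent under the duality $V \leftrightarrow V^*$, and track which tensor slots are being contracted at each stage. Koszulity of $S$ (Lemma \ref{lem.qpaKos}) is what guarantees that pinning down the quadratic data $R$ determines $S$, so that the equality of subspaces $\partial^{n-2}(kw) = R$ upgrades to an isomorphism of algebras. If one additionally wanted $w$ to be a genuine (cyclically invariant) superpotential, this would reflect the Nakayama automorphism of $S^!$ being trivial, i.e.\ $S$ being Calabi-Yau; in general one only obtains a twisted superpotential, but the statement asks only for a derivation quotient algebra and so this refinement is not needed here.
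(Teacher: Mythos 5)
Your proof is correct. Note that the paper itself offers no proof of this statement: it simply records it as a special case of \cite[Theorem 11]{DV}, so there is nothing internal to compare against. Your argument is a clean, self-contained version of the standard mechanism underlying Dubois-Violette's theorem: take $w\in V^{\otimes n}\cong((V^*)^{\otimes n})^*$ to be the composite $(V^*)^{\otimes n}\twoheadrightarrow S^!_n\cong k$, observe that $\ker\mu_n$ is the degree-$n$ piece $\sum_{i+j=n-2}(V^*)^{\otimes i}\otimes R^{\perp}\otimes(V^*)^{\otimes j}$ of the ideal $(R^\perp)$, and use perfectness of the multiplication pairing $S^!_{n-2}\times S^!_2\to S^!_n$ to conclude $(\partial^{n-2}(kw))^{\perp}=R^{\perp}$, hence $\partial^{n-2}(kw)=R$. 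This matches the definition of $\cD(w,n-2)$ used in the paper, which does not require $w$ to be cyclically invariant, so your closing remark that one only gets a twisted superpotential in general (genuine superpotentials corresponding to the Calabi--Yau case, cf.\ Proposition \ref{prop.symd} and Corollary \ref{cor.symd}) is exactly right.

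Two small points worth tightening. First, the paper defines ``graded Frobenius'' as connected graded plus Frobenius, with no homogeneity imposed on the bilinear form; you should justify that this still forces the graded pairing $S^!_{i}\times S^!_{n-i}\to S^!_n$ to be perfect. This is standard: Frobenius gives $\soc(S^!)\cong(S^!/\operatorname{rad}S^!)^*\cong k$, the socle of a graded module is graded and contains $S^!_n$, so $\soc(S^!)=S^!_n$ is one-dimensional; then for $0\neq a\in S^!_i$ the right ideal $aS^!$ meets the socle, and extracting the degree-$n$ component produces $b\in S^!_{n-i}$ with $ab\neq 0$. Second, your appeal to Koszulity at the end is slightly misplaced: once you know $\partial^{n-2}(kw)=R$ you have $\cD(w,n-2)=T(V)/(R)=S$ on the nose, with no further input. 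What Koszulity (Lemma \ref{lem.qpaKos}) actually buys you is upstream --- it guarantees that $S$ is quadratic, so that writing $S=T(V)/(R)$ with $R\subset V\otimes V$ and invoking Lemma \ref{lem.Sm} for the Frobenius dual is legitimate in the first place.
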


We use the following fact:

\begin{lemma} \label{lem.symn}  
If $W:={\rm Sym}^2(V)=\<u\otimes v+v\otimes u\mid u, v\in V\>\subset V\otimes V$, then 
$${\rm Sym}^n(V)=\bigcap _{i+j+2=n}V^{\otimes i}\otimes W\otimes V^{\otimes j}.$$
\end{lemma}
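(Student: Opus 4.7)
The plan is to reduce the identity to a statement about symmetric group invariants and invoke the fact that adjacent transpositions generate $\mathfrak{S}_n$. Since we work in characteristic $0$, the subspace $W=\mathrm{Sym}^2(V)=\langle u\otimes v+v\otimes u\rangle$ coincides with the fixed space of the swap $\tau\colon V\otimes V\to V\otimes V$, $u\otimes v\mapsto v\otimes u$; indeed $u\otimes v+v\otimes u=(1+\tau)(u\otimes v)$ and in characteristic $0$ the projector $\tfrac12(1+\tau)$ shows $W=(V\otimes V)^{\tau}$.

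For each pair $(i,j)$ with $i+j+2=n$, let $\tau_{i+1}\in \mathfrak S_n$ be the adjacent transposition $(i+1,\,i+2)$, acting on $V^{\otimes n}$ by permuting tensor factors. First I would verify the identification
$$
V^{\otimes i}\otimes W\otimes V^{\otimes j}\;=\;(V^{\otimes n})^{\tau_{i+1}}.
$$
To see this, pick a basis $\{e_\alpha\}$ of $V$ and decompose $V^{\otimes n}$ as the direct sum, over all multi-indices $(\alpha_1,\dots,\alpha_i,\alpha_{i+3},\dots,\alpha_n)$, of the $\tau_{i+1}$-stable summands $e_{\alpha_1}\otimes\cdots\otimes e_{\alpha_i}\otimes (V\otimes V)\otimes e_{\alpha_{i+3}}\otimes\cdots\otimes e_{\alpha_n}$. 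Taking invariants in each summand and summing gives $V^{\otimes i}\otimes W\otimes V^{\otimes j}$, as claimed.

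With that identification in hand, the intersection on the right-hand side becomes
$$
\bigcap_{i+j+2=n} V^{\otimes i}\otimes W\otimes V^{\otimes j}\;=\;\bigcap_{m=1}^{n-1}(V^{\otimes n})^{\tau_{m}}\;=\;(V^{\otimes n})^{\langle \tau_1,\dots,\tau_{n-1}\rangle}.
$$
Since the adjacent transpositions $\tau_1,\dots,\tau_{n-1}$ generate $\mathfrak S_n$, this common fixed space is exactly $(V^{\otimes n})^{\mathfrak S_n}=\mathrm{Sym}^n(V)$, yielding the desired equality.

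There is no real obstacle in this argument; the only substantive point is the invariant-computation in the first step, which is essentially formal once one uses that $\tau_{i+1}$ acts trivially on the outer $V^{\otimes i}$ and $V^{\otimes j}$ factors. The use of characteristic $0$ is restricted to identifying the symmetrization image $W$ with the swap-invariants; everything else is group-theoretic.
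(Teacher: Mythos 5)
Your proof is correct and complete. The paper actually states this lemma without proof (it is introduced as ``the following fact''), so there is no in-paper argument to compare against; your reduction — identifying $V^{\otimes i}\otimes W\otimes V^{\otimes j}$ with the fixed space of the adjacent transposition $(i+1,\,i+2)$ via a basis decomposition into $\tau_{i+1}$-stable summands, and then using that adjacent transpositions generate $\mathfrak{S}_n$ — is the standard argument, and you correctly isolate the only place characteristic $0$ is needed, namely the identification of the symmetrization span $W$ with the swap-invariants $(V\otimes V)^{\tau}$.
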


\begin{proposition} \label{prop.symd} 
Let $S=T(V)/(R)$ be a quantum polynomial algebra of dimension $n\geq 2$.  Then $S\in \sC_{n, 0}$ if and only if there exists $w\in {\rm Sym}^n(V)$ such that $S=\cD(w, n-2)$. 
\end{proposition}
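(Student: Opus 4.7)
My plan rests on the following \emph{symmetry equivalence}, which holds for any $w\in V^{\otimes n}$:
\[
\partial^{n-2}(w)\subset {\rm Sym}^{2}(V)\ \iff\ w\in V^{\otimes n-2}\otimes {\rm Sym}^{2}(V).
\]
Indeed, writing $w=\sum a_{i_{1}\cdots i_{n}}x_{i_{1}}\otimes\cdots\otimes x_{i_{n}}$, contracting the first $n-2$ slots against arbitrary $\psi_{1},\dots,\psi_{n-2}\in V^{*}$ produces all matrices $(a_{\ell_{1},\dots,\ell_{n-2},i,j})_{i,j}$; requiring each such matrix to be symmetric for every choice of $\ell$-indices is exactly symmetry of $w$ in its last two tensor slots.

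For $(\Leftarrow)$, $w\in{\rm Sym}^{n}(V)\subset V^{\otimes n-2}\otimes{\rm Sym}^{2}(V)$ forces $R=\partial^{n-2}(w)\subset{\rm Sym}^{2}(V)$, hence $R^{\perp}\supset{\rm Sym}^{2}(V)^{\perp}=\wedge^{2}(V^{*})$, and so $S^{!}=T(V^{*})/(R^{\perp})$ is a commutative quotient of $k[u_{1},\dots,u_{n}]$. Lemma~\ref{lem.Sm} supplies the graded Frobenius Koszul property together with $H_{S^{!}}(t)=(1+t)^{n}$, and dimension counting presents $S^{!}=k[u_{1},\dots,u_{n}]/(f_{1},\dots,f_{n})$ with $f_{1},\dots,f_{n}$ linearly independent quadrics. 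Comparing against $(1+t)^{n}=(1-t^{2})^{n}(1-t)^{-n}$ and invoking Lemma~\ref{lem.rHn} forces $(f_{1},\dots,f_{n})$ to be a regular sequence, so $S^{!}=B(F)\in\sB_{n,n}$ for some linearly independent $F$. Normalizing $F$ via Corollary~\ref{cor.nor} and applying Proposition~\ref{prop.BSC}(3) yields $S\cong B(F)^{!}\cong C(F)\in\sC_{n,0}$.

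For $(\Rightarrow)$, Corollary~\ref{cor.nor} together with Proposition~\ref{prop.BSC}(2) lets us assume $S=S^{F}$ for a normalized $F$, whose defining relations $\{x_{i}x_{j}+x_{j}x_{i}-\sum_{m}(F_{m})_{ij}x_{m}^{2}\mid i<j\}$ visibly span $R\subset{\rm Sym}^{2}(V)$. The Dubois--Violette theorem (after transport across the stated isomorphism) supplies a superpotential $w\in V^{\otimes n}$ with $\partial^{n-2}(w)=R$; the symmetry equivalence then gives $w\in A:=V^{\otimes n-2}\otimes{\rm Sym}^{2}(V)$. Since $w\in A$, we have $\phi^{j}(w)\in\phi^{j}(A)$, and combining with the superpotential condition $\phi^{j}(w)=w$ yields $w\in\phi^{j}(A)$ for every $j$. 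Identifying $\phi$ with the cycle $(1\,2\,\cdots\,n)\in{\frak S}_{n}$ acting on $V^{\otimes n}$ in the standard way, $\phi^{j}(A)$ is the subspace invariant under the transposition of positions $(n-1+j,n+j)\pmod{n}$. As $j$ runs over $0,2,3,\dots,n-1$, these subspaces exhaust all factors $V^{\otimes i}\otimes{\rm Sym}^{2}(V)\otimes V^{\otimes j'}$ with $i+j'+2=n$, so Lemma~\ref{lem.symn} delivers $w\in{\rm Sym}^{n}(V)$.

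The main obstacle I foresee is the positional bookkeeping in $(\Rightarrow)$: one must verify that conjugating the transposition $(n-1,n)$ by powers of the cyclic shift $\phi$ sweeps out \emph{every} non--wrap-around adjacent transposition $(k,k+1)$ (with $j=1$ giving only the useless wrap-around pair $(n,1)$), so that Lemma~\ref{lem.symn} genuinely applies. Once this is pinned down, the $(\Leftarrow)$ direction is routine Koszul-duality bookkeeping against the commutative complete-intersection machinery already organized in Sections~2 and 3.
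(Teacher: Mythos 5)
Your symmetry equivalence, your $(\Leftarrow)$ direction, and the positional bookkeeping you worried about are all fine: conjugating $\tau_{n-1,n}$ by $\phi^{j}$ for $j=0,2,3,\dots,n-1$ does sweep out every non-wrap-around adjacent transposition, and your $(\Leftarrow)$ argument is essentially an unwinding of Corollary \ref{cor.clco}, which the paper simply cites ($S\in\sC_{n,0}$ iff $S^{!}$ is commutative iff $R\subset{\rm Sym}^{2}(V)$).

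The genuine gap is in the premise of your $(\Rightarrow)$ direction: the Dubois--Violette theorem does \emph{not} supply a genuine superpotential with $\phi(w)=w$. The version stated in the paper only asserts that $S$ is a derivation quotient algebra of some $w$; in Dubois--Violette's actual theorem $w$ is a \emph{twisted} superpotential, with twist governed by the Nakayama automorphism, and $\phi(w)=w$ already fails for generic skew polynomial rings (for $k_{q}[x,y]$ one has $w=x\otimes y-q^{-1}y\otimes x$, and $\phi(w)\notin kw$ unless $q=\pm 1$). For $S\in\sC_{n,0}$ the presenting element \emph{is} cyclically invariant, but only because it lies in ${\rm Sym}^{n}(V)$ --- i.e., this is a consequence of the statement you are proving; indeed Corollary \ref{cor.symd} deduces the superpotential property of $w$ \emph{from} Proposition \ref{prop.symd}, not conversely. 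So your propagation of the symmetry from the last two slots to all adjacent slots rests on an unjustified, and in general false, hypothesis. The paper's proof avoids this entirely: the Dubois--Violette element can be taken in the one-dimensional space $\bigcap_{i+j+2=n}V^{\otimes i}\otimes R\otimes V^{\otimes j}$ (the top component of the Koszul dual), and once $R\subset{\rm Sym}^{2}(V)$ this membership \emph{already} exhibits $w$ as symmetric in every adjacent pair of slots, so Lemma \ref{lem.symn} applies with no appeal to cyclicity. (Your route could in principle be repaired by using the precise twisted cyclicity $\phi^{j}(w)=(\sigma^{\otimes j}\otimes\id^{\otimes n-j})(w)$, since $\sigma\otimes\sigma$ commutes with the flip on the relevant pair of slots, but you would have to invoke and verify that form rather than the untwisted one.)
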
 

\begin{proof}   Let $W:={\rm Sym}^2(V)
\subset V\otimes V$.  By Corollary \ref{cor.clco}, $S\in \sC_{n, 0}$ if and only if $S^!=T(V^*)/(R^{\perp})$ is commutative if and only if $W^{\perp}\subset R^{\perp}$ if and only if  $R\subset W$. 

If 
$S\in \sC_{n, 0}$, then $R\subset W$, so $S=T(V)/(R)=\cD(w, n-2)$ for some
$$w\in \bigcap _{i+j+2=n}V^{\otimes i}\otimes R\otimes V^{\otimes j}\subset \bigcap _{i+j+2=n}V^{\otimes i}\otimes W\otimes V^{\otimes j}= {\rm Sym}^n(V)$$
by Lemma \ref{lem.symn}. 

Conversely, if $S=\cD(w, n-2)$ for some
$$w\in {\rm Sym}^n(V)=\bigcap _{i+j+2=n}V^{\otimes i}\otimes W\otimes V^{\otimes j}\subset V^{\otimes n-2}\otimes W$$
then $R=\partial ^{n-2}(kw)\subset W$, so $S\in \sC_{n, 0}$.
\end{proof} 

\begin{corollary} \label{cor.symd} $S\in \sC_{n, 0}$ is Calabi-Yau if and only if $n$ is odd. 
\end{corollary}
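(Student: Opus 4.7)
The plan is to combine Proposition \ref{prop.symd} with the standard characterization of the Calabi-Yau property for derivation quotient algebras of superpotentials. By Proposition \ref{prop.symd}, any $S \in \sC_{n,0}$ admits a presentation $S = \cD(w, n-2)$ for some $w \in {\rm Sym}^n(V)$. This $w$ must be nonzero, since otherwise $\cD(w, n-2) = T(V)$, whose Hilbert series is not $(1-t)^{-n}$. Moreover, since ${\rm Sym}^n(V)$ is invariant under the action of $\frak S_n$ on $V^{\otimes n}$, in particular under the cyclic shift $\phi$, we have $\phi(w) = w$, so $w$ is a superpotential in the sense defined above.

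The second ingredient is the following criterion (due to Bocklandt / Dubois--Violette, and already implicit in the machinery of Van den Bergh duality for Koszul AS-regular algebras): an $n$-dimensional quantum polynomial algebra of the form $\cD(w, n-2)$ is Calabi-Yau if and only if the superpotential $w \in V^{\otimes n}$ satisfies the twisted cyclic symmetry $\phi(w) = (-1)^{n-1}w$. This sign $(-1)^{n-1}$ reflects the Koszul sign rule in the Frobenius pairing on $S^!$ of top degree $n$, which governs the Nakayama automorphism of $S$ via Koszul duality.

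Combining these two facts yields the corollary in one line: since $\phi(w) = w$ and $w \neq 0$, the Calabi-Yau condition $\phi(w) = (-1)^{n-1}w$ reduces to $(-1)^{n-1} = 1$, which holds if and only if $n$ is odd.

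The main obstacle in writing this out carefully is simply keeping the sign conventions consistent, since the paper's definition of superpotential omits the $(-1)^{n-1}$ sign that some references absorb directly into the definition. Once this convention is pinned down so that the cited Calabi-Yau criterion takes the form $\phi(w) = (-1)^{n-1}w$, the remainder of the argument is a single parity check and requires no further computation.
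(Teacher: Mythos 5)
Your proposal is correct and follows essentially the same route as the paper: Proposition \ref{prop.symd} supplies the presentation $S=\cD(w,n-2)$ with $w\in{\rm Sym}^n(V)$, and the paper then invokes the Mori--Smith criterion (\cite[Theorems 6.3, 6.4]{MS}) that such a derivation quotient algebra is Calabi--Yau precisely when the superpotential satisfies the twisted cyclic symmetry with sign $(-1)^{n+1}=(-1)^{n-1}$, which for a fully symmetric $w$ reduces to the same parity check you perform.
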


\begin{proof} By Proposition \ref{prop.symd}, $S=\cD(w, n-2)$ for some $w\in {\rm Sym}^n(V)$.  Since $w$ is a superpotential, the result follows from \cite[Theorem 6.3, Theorem 6.4]{MS}.   
\end{proof}  

\begin{remark}  $\sB_{n, 0}\subset \sA_{n, 0}$ for every $n$, however, by Corollary \ref{cor.symd},  $\sC_{n, 0}\subset \sA_{n, 0}$ if and only if $n$ is odd. 
\end{remark}


\subsection{The center}  \label{center}

In this subsection, we explicitly compute the center of $C(F)$. We remind the reader that $k$ has characteristic $0$.  We also remind the reader that $uv+vu\in C(F)_2$ is a central element for every $u, v\in C(F)_1$ since it is a linear combination of central elements $y_1, \dots, y_n$.  
In this subsection, we will show that every central element $z \in C(F)_2$ is a linear combination of $y_1, \dots, y_n$. 
Note that, as mentioned in the proof of \cite[Theorem 5]{S} (see also Lemma \ref{lem.c1}), $y_1, \dots, y_n\in C(F)$ are linearly independent. 

\begin{lemma} \label{lem.bac2} 
Let ${R}$ be an integral domain, ${K}=Q({R})$ the field of fractions, and $\phi: {R}\to {K}$ the natural injection. 
For a symmetric matrix $\cF\in M_n({R})$,
$C_{R}(\cF)\otimes _{R}{K}\cong C_{K}(\phi(\cF))$ where $\phi(\cF):=(\phi(\cF_{ij}))\in M_n({K})$.
\end{lemma}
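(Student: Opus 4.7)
The plan is to apply Lemma \ref{lem.bac} (2) directly with $S = K$ and with the defining relations of the Clifford algebra. Recall that by definition
$$
C_R(\mathcal{F}) = R\langle x_1, \dots, x_n \rangle / (f_{ij})_{1 \leq i,j \leq n}, \qquad f_{ij} := x_i x_j + x_j x_i - \mathcal{F}_{ij}.
$$
The natural injection $\phi : R \to K$ extends to $\bar\phi : R\langle x_1, \dots, x_n \rangle \to K\langle x_1, \dots, x_n \rangle$ by fixing the $x_i$'s, so $\bar\phi(f_{ij}) = x_i x_j + x_j x_i - \phi(\mathcal{F}_{ij})$. The symmetric matrix $\phi(\mathcal{F}) \in M_n(K)$ thus has $(i,j)$-entry $\phi(\mathcal{F}_{ij})$, and
$$
C_K(\phi(\mathcal{F})) = K\langle x_1, \dots, x_n\rangle / \bigl(\bar\phi(f_{ij})\bigr)_{1 \leq i,j \leq n}.
$$

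With these identifications, the conclusion is exactly the assertion of Lemma \ref{lem.bac} (2) applied to the homomorphism $\phi : R \to K$ and to the generating set $\{f_{ij}\}_{1 \leq i,j \leq n} \subset R\langle x_1, \dots, x_n\rangle$. The induced map
$$
\varphi : C_R(\mathcal{F}) \otimes_R K \longrightarrow C_K(\phi(\mathcal{F}))
$$
is a ring isomorphism, which is what we want.

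There is no real obstacle here; the statement is a formal consequence of base-change compatibility of presentations by generators and relations, which has already been recorded in Lemma \ref{lem.bac}. The only small thing to note is that because $\mathcal{F}$ is symmetric, $\phi(\mathcal{F})$ is also symmetric (entrywise application of $\phi$ preserves symmetry), so the right-hand side is genuinely of the form $C_K(-)$ as defined. The proof will therefore be a single short paragraph citing Lemma \ref{lem.bac} (2).
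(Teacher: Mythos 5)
Your proof is correct and is essentially identical to the paper's own argument: both apply Lemma \ref{lem.bac} (2) to the finite set of defining relations $x_ix_j+x_jx_i-\cF_{ij}$ and observe that $\bar\phi$ sends these to the defining relations of $C_K(\phi(\cF))$. Nothing further is needed.
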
  

\begin{proof}
By Lemma \ref{lem.bac} (2),  
\begin{align*}
C_R(\cF)\otimes _RK & :=R\<x_1, \dots, x_n\>/(x_ix_j+x_jx_i-\cF_{ij})
\otimes_RK \\
& \cong K\<x_1, \dots, x_n\>/(x_ix_j+x_jx_i-\phi(\cF_{ij})) \\
& = K\<x_1, \dots, x_n\>/(x_ix_j+x_jx_i-\phi(\cF)_{ij}) \\
& =: C_K(\phi(\cF)).  \qedhere
\end{align*}

\end{proof}

\begin{lemma} \label{lem.c4}  Let $K$ be a field, and $\cF\in M_n(K)$ a symmetric matrix.  
If $C_K(\cF)$ is a domain, then $\det \cF\neq 0$.  
\end{lemma}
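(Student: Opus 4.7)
The plan is to prove the contrapositive: if $\det \cF = 0$, then $C_K(\cF)$ has a nonzero element whose square is $0$, hence is not a domain. (The argument will use $\operatorname{char} K \neq 2$, which is implicit in the setting of the paper where $k$ has characteristic $0$; this hypothesis is already in force in nearby Lemmas \ref{lem.cKf} and \ref{lem.FF'2}.)

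First, assuming $\det \cF = 0$, I would pick a nonzero vector $a = (a_1, \dots, a_n)^t \in K^n$ with $\cF a = 0$, and consider the degree-$1$ element $z := \sum_{i=1}^n a_i x_i \in C_K(\cF)$. By Lemma \ref{lem.HO}, the set $\{1, x_{i_1}\cdots x_{i_t} \mid 1 \le t \le n,\ i_1 < \cdots < i_t\}$ is a $K$-basis of $C_K(\cF)$; in particular $x_1, \dots, x_n$ are $K$-linearly independent, so $z \neq 0$.

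Next, I would compute $z^2$ directly using the defining Clifford relations $x_ix_j + x_jx_i = \cF_{ij}$ (which, for $i=j$ and $\operatorname{char} K \neq 2$, yield $x_i^2 = \cF_{ii}/2$). Splitting the sum into diagonal and off-diagonal parts and using the symmetry $\cF_{ij} = \cF_{ji}$,
\begin{align*}
z^2 \;=\; \sum_{i,j} a_i a_j x_i x_j
\;=\; \sum_i a_i^2 x_i^2 + \sum_{i<j} a_i a_j (x_ix_j + x_jx_i)
\;=\; \tfrac12 \sum_{i,j} a_i a_j \cF_{ij}
\;=\; \tfrac12 \, a^t \cF a.
\end{align*}
Since $\cF a = 0$, the right-hand side vanishes, so $z^2 = 0$. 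Thus $z$ is a nonzero nilpotent in $C_K(\cF)$, contradicting the hypothesis that $C_K(\cF)$ is a domain. This forces $\det \cF \neq 0$.

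There is no real obstacle here: the argument is a one-line kernel-vector construction followed by a direct computation. The only subtlety is making sure Lemma \ref{lem.HO} really does guarantee that the linear span of $x_1, \dots, x_n$ inside $C_K(\cF)$ has $K$-dimension $n$, which is immediate from the explicit $K$-basis in the statement of that lemma.
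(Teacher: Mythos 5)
Your proof is correct, but it takes a slightly different route from the paper's. The paper first reduces to the diagonal case: by Lemmas \ref{lem.FF'} and \ref{lem.FF'2} (Sylvester), $C_K(\cF)\cong C_K(\cF')$ for a diagonal $\cF'=\diag(\l_1,\dots,\l_n)$, and if $\det\cF=0$ then some $\l_i=0$, so the generator $x_i$ itself satisfies $x_i^2=0$ while being nonzero. You instead work directly with $\cF$, taking a nonzero kernel vector $a$ and showing that $z=\sum_i a_ix_i$ squares to $\tfrac12\,a^t\cF a=0$; this avoids the change of basis entirely and makes explicit that the nilpotent comes from the radical of the quadratic form, at the cost of a short direct computation. (In effect your $z$ is the preimage of the paper's $x_i$ under the diagonalizing isomorphism.) Both arguments need $\operatorname{char}K\neq 2$, and both rely on Lemma \ref{lem.HO} to know the degree-one element is nonzero --- a point the paper leaves implicit and you rightly flag. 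Your computation of $z^2$ is correct: the diagonal terms contribute $\sum_i a_i^2\cF_{ii}/2$ and the off-diagonal terms $\sum_{i<j}a_ia_j\cF_{ij}$, which together give $\tfrac12\,a^t\cF a$, and this vanishes already because $\cF a=0$ (indeed only $a^t\cF a=0$ is needed).
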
  

\begin{proof}
By Lemma \ref{lem.FF'} and Lemma \ref{lem.FF'2}, 
$$C_K(\cF)\cong C_K(\cF')= K\<x_1, \dots, x_n\>/(x_ix_j+x_jx_i, x_m^2-\l_m)_{1\leq i<j\leq n, 1\leq m\leq n}$$
for some diagonal matrix $\cF'={\rm diag}(\l_1, \dots, \l_n)$. If $\det \cF=0$, then $\det \cF'=0$, so $\l_i=0$ for some $i$.  It follows that $x_i^2=0$ in $C_K(\cF')$, so $C_K(\cF)$ is not a domain.  
\end{proof}

\begin{lemma} \label{lem.cRK} 
Let $R$ be an integral domain,  and $K:=Q(R)$ the field of fractions. 
If $A$ is a finitely generated $R$-algebra which is free as an $R$-module, and $\psi:A\to A\otimes _RK$ is the algebra homomorphism defined by  $\psi(a) = a\otimes 1$,
then $Z(A)=\psi^{-1}(Z(A\otimes _RK))$.  
\end{lemma}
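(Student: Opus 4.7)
The plan is to verify both inclusions directly, the main preliminary being that $\psi$ is injective. Since $A$ is free as an $R$-module, choose an $R$-basis $\{e_i\}_{i \in I}$ of $A$. Then $A \otimes_R K$ is a free $K$-module with basis $\{e_i \otimes 1\}_{i \in I}$, and under these bases the map $\psi$ is just the componentwise inclusion $R^{(I)} \hookrightarrow K^{(I)}$, which is injective because $R \hookrightarrow K$ is. (One could also argue this from flatness of localization applied to the injection $R \hookrightarrow K$ tensored with the free, hence flat, module $A$, but the basis argument is the cleanest.)

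For the inclusion $Z(A) \subset \psi^{-1}(Z(A \otimes_R K))$, I would note that every element of $A \otimes_R K$ can be written as $\sum_j (b_j \otimes 1)(1 \otimes r_j/s_j) = \sum_j \psi(b_j) \cdot (1 \otimes r_j/s_j)$, and that the scalars $1 \otimes r/s$ lie in the center of $A \otimes_R K$ (they are central because $K$ is commutative and acts through the right-hand tensor factor). Thus, to verify that $\psi(a)$ is central it suffices to check that $\psi(a)\psi(b) = \psi(b)\psi(a)$ for all $b \in A$; but this is immediate from $ab = ba$ and the fact that $\psi$ is a ring homomorphism.

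For the reverse inclusion, suppose $\psi(a) \in Z(A \otimes_R K)$. Then for every $b \in A$,
$$
\psi(ab - ba) = \psi(a)\psi(b) - \psi(b)\psi(a) = 0,
$$
and injectivity of $\psi$ forces $ab - ba = 0$. Hence $a \in Z(A)$, completing the proof.

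I do not expect a real obstacle here: the statement is essentially a formal consequence of the fact that flat (in fact free) base change along $R \hookrightarrow K$ embeds $A$ faithfully into $A \otimes_R K$, combined with the observation that $K$ maps into the center of $A \otimes_R K$. The only small subtlety is making clear why it is enough to test centrality against elements of the form $\psi(b)$ rather than against all of $A \otimes_R K$, and this is handled by the centrality of the scalar factors $1 \otimes r/s$.
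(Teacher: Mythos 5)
Your proof is correct and complete; note that the paper itself leaves this lemma to the reader, so there is no written proof to compare against, but your argument (injectivity of $\psi$ via freeness of $A$ over $R$, centrality of the scalars $1\otimes k$ to reduce the forward inclusion to commutation with elements $\psi(b)$, and injectivity again for the reverse inclusion) is exactly the standard argument the authors intend.
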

 
\begin{proof} 
Left to the reader.
\end{proof}

The following result may be known to the experts, however, we will include our proof for the convenience of the reader.

\begin{theorem} \label{thm.Kev} 
Let $C(F)$ be a graded Clifford algebra, $R := k[y_1, \dots, y_n]$, and 
$\cF:=\sum _{m=1}^nF_my_m\in M_n(R)$. If $\det \cF\neq 0$, then 
$$Z(C(F))=\begin{cases} R & \textnormal { if $n$ is even,} \\
 R + Rg & \textnormal { for $g\in C(F)_n$ such that $g^2=\det \cF$ if $n$ is odd}. \end{cases}$$
\end{theorem}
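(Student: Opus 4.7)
The strategy is to transfer the computation to the field of fractions $K=Q(R)$, where $\phi(\cF)$ diagonalizes by Sylvester and Lemma~\ref{lem.cKf} applies directly, and then descend to $R$. By Lemma~\ref{lem.c1}, $C(F)\cong C_R(\cF)$ as graded $R$-algebras, and $R\subseteq Z(C_R(\cF))$ since each $y_m$ is central. Since $C_R(\cF)$ is free over $R$ (Lemma~\ref{lem.HO}), Lemmas~\ref{lem.cRK} and~\ref{lem.bac2} together give
$$
Z(C_R(\cF))=\psi^{-1}\bigl(Z(C_K(\phi(\cF)))\bigr),
$$
with $\psi\colon C_R(\cF)\to C_K(\phi(\cF))$ the canonical base-change. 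Since $\det\phi(\cF)\neq 0$, Sylvester (Lemma~\ref{lem.sy11}) yields $P\in\GL_n(K)$ with $\cF':=P^t\phi(\cF)P=\diag(\l_1,\dots,\l_n)$, $\l_i\in K^*$, and Lemma~\ref{lem.FF'} gives an isomorphism $\Psi\colon C_K(\cF')\to C_K(\phi(\cF))$, $\Psi(x_j')=\sum_ip_{ij}x_i$. Lemma~\ref{lem.cKf} then describes $Z(C_K(\cF'))$ as $K$ if $n$ is even and $K\oplus K\cdot x_1'\cdots x_n'$ if $n$ is odd.

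\textbf{Even case.} If $z=\sum_Ir_Ix_I\in C_R(\cF)$ is central (expanded in the free basis of Lemma~\ref{lem.HO}), then $\psi(z)\in K\cdot 1$ forces $\psi(r_I)=0$ for $I\neq\emptyset$; injectivity of $\phi\colon R\to K$ then gives $r_I=0$, so $Z(C_R(\cF))=R$.

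\textbf{Odd case.} Using $x_i'x_j'=-x_j'x_i'$ for $i\neq j$ and $(x_i')^2=\l_i/2$ (here $\operatorname{char} k=0$ is essential), direct computation yields $(x_1'\cdots x_n')^2=(-1)^{n(n-1)/2}\det(\cF')/2^n$. Transporting via $\Psi$ and expanding $h:=\Psi(x_1'\cdots x_n')=\prod_j(\sum_ip_{ij}x_i)$, reducing in the basis of $C_K(\phi(\cF))$ via Clifford relations whose structure constants $\phi(\cF)_{ij}$ lie in $\phi(R)$, the permutation-sign formula for the determinant gives
$$
h=\det(P)\cdot x_1\cdots x_n+(\text{terms in basis }\{x_I\}_{|I|<n}),\qquad h^2=(-1)^{n(n-1)/2}\det(P)^2\phi(\det\cF)/2^n.
$$
A constant rescaling $\tilde g:=\alpha h$ by $\alpha\in K^*$ (built from $\sqrt{2},\sqrt{-1}\in k$ and $\det(P)^{-1}$) produces $\tilde g^2=\phi(\det\cF)$ with leading coefficient in $k^*$.

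\textbf{Main obstacle: descent.} The crux of the proof is showing that $\tilde g\in\psi(C_R(\cF))$, i.e., that the $K$-coefficients of $\tilde g$ in the basis of Lemma~\ref{lem.HO} actually lie in $\phi(R)$. The leading coefficient is already in $k\subseteq R$, but the subleading ones a priori involve the $p_{ij}\in K$; one must verify that the denominators introduced by rescaling by $\det(P)^{-1}$ cancel. This can be done by tracking $\phi(R)$-integrality explicitly through the Clifford reduction of $\prod_j(\sum_ip_{ij}x_i)$, or by exploiting the canonicity of the pseudoscalar (independence from the choice of diagonalizing $P$), or most directly by imposing the equations $\tilde g^2=\phi(\det\cF)$ and centrality on a generic $z=\sum_{|I|\text{ odd}}r_Ix_I\in C_R(\cF)_n$ with unknowns $r_I\in R$ and solving. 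Once $g\in C_R(\cF)_n$ with $g^2=\det\cF$ is produced, $R+Rg\subseteq Z(C_R(\cF))$ is immediate. Conversely, any central $z$ of odd degree $d$ satisfies $\psi(z)=\b\tilde g$ for some $\b\in K$ (since $K$ has no degree-$n$ elements for odd $n$ in the graded localization); extracting the $x_1\cdots x_n$ coefficient and matching against $\b$ times the leading scalar of $\tilde g$ identifies $\b=\phi(r)$ with $r\in R_{d-n}$, so $z=rg\in Rg$. The even-degree case parallels the $n$-even argument, completing $Z(C_R(\cF))=R+Rg$.
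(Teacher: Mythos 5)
Your overall architecture (base change to $K=Q(R)$, diagonalize by Sylvester, apply Lemma \ref{lem.cKf}, then descend) matches the paper's, and your even case and the final "conversely" step are essentially sound. But there is a genuine gap exactly where you flag "the main obstacle: descent." You never actually produce an element $g\in C(F)_n$ with coefficients in $R$ whose image spans the odd part of $Z(C_K(\phi(\cF)))$; you only list three candidate strategies, none of which is carried out, and none of which obviously works. Tracking $\phi(R)$-integrality through the Clifford reduction of $\prod_j(\sum_i p_{ij}x_i)$ after rescaling by $\det(P)^{-1}$ is not a routine computation — the subleading coefficients are rational functions in the $p_{ij}$ and there is no a priori reason the denominators cancel. "Canonicity of the pseudoscalar" is not an argument until you say what the canonical representative over $R$ is. And setting up $\tilde g^2=\phi(\det\cF)$ plus centrality as equations in unknowns $r_I\in R$ presupposes that a solution with coefficients in $R$ (rather than merely in $K$) exists, which is precisely the point at issue. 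Since the entire odd case of the theorem rests on exhibiting this $g$, the proof is incomplete as written.

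The paper closes this gap with one clean idea that you are missing: instead of pushing forward the product $x_1'\cdots x_n'$ from the diagonalized algebra (which lands you in $K$-coefficients), pull back the antisymmetrizer. Set $w:=\sum_{\sigma\in\frak S_n}(\sgn\sigma)\,x_{\sigma(1)}\cdots x_{\sigma(n)}$, which has integer coefficients and hence defines $g\in C(F)_n$ directly over $R$. Under the substitution $\phi_P$ one has $\phi_P(w)=(\det P)\,w$ in the free algebra (the top antisymmetric tensor transforms by the determinant), while in the diagonalized Clifford algebra $C_K(\cF')$ the relations $x_j'x_i'=-x_i'x_j'$ collapse $w$ to $n!\,x_1'\cdots x_n'$. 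Hence $\pi(w)$ is a nonzero $K$-multiple of $\phi_P(x_1'\cdots x_n')$, so $K+K\pi(w)=Z(C_K(\phi(\cF)))$, and $g$ is by construction already in $C(F)$ — no descent computation needed. One then still has to prove $\psi^{-1}(K+K\pi(w))=R+Rg$, which the paper does by replacing $x_1\cdots x_n$ with $g$ in the free $R$-basis of Lemma \ref{lem.HO} (legitimate because $g-n!\,x_1\cdots x_n$ is an $R$-linear combination of shorter monomials, via $x_jx_i=-x_ix_j+\cF_{ij}$ with $\cF_{ij}\in R$); your coefficient-matching sketch would need to be upgraded to this basis argument to be complete. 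If you incorporate the antisymmetrizer construction of $g$, the rest of your outline (including the computation $g^2=(-1)^{n(n-1)/2}(n!)^2\det\cF/2^n$ up to rescaling) goes through.
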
  

\begin{proof} 
By Lemma \ref{lem.c1} and Lemma \ref{lem.bac2}, we have an $R$-algebra homomorphism 
$$
\psi: C(F) \cong C_R(\cF) \to  C_R(\cF)\otimes _RK \cong C_K(\phi(\cF)); f \mapsto f\otimes 1
$$
where 
$K:=Q(R)$ is the field of fractions, 
and $\phi:R\to K$ is the natural injection.
By Lemma \ref{lem.FF'2}, there exists $P=(p_{ij})\in \GL_n(K)$ such that $\cF':=P^t\phi(\cF)P$ is a diagonal matrix.  
By  Lemma \ref{lem.FF'}, the $K$-algebra automorphism $\phi_P$ of $K\<x_1, \dots, x_n\>$ defined by $\phi_P(x_j)=\sum^n_{i = 1} p_{ij}x_i$ induces a $K$-algebra isomorphism $
\phi_P:C_K(\cF')\to C_K(\phi(\cF))$
so that we have the following commutative diagram$$
\xymatrix{
K \langle x_1, \dots, x_n\rangle \ar[r]^-{\phi_P}_-{\cong} \ar[d]_-{\pi'} & K \langle x_1, \dots, x_n\rangle \ar[d]^-{\pi} \\
C_K(\cF') \ar[r]^-{\phi_P}_-{\cong} & C_K(\phi(\cF))
}
$$
where $\pi: K \langle x_1, \dots, x_n\rangle \to C_K(\phi(\cF))$, $\pi': K \langle x_1, \dots, x_n\rangle \to C_K(\cF')$ are natural surjections.

If $w : = \sum_{\sigma \in \frak S_n} (\sgn \sigma) x_{\sigma(1)} \cdots x_{\sigma(n)} \in K \langle x_1, \dots, x_n\rangle_n$, then it is well-known that 
$\phi_P(w) = (\det P) w$, so 
$$\phi_P(\pi'(w)) = \pi(\phi_P(w)) = \pi((\det P) w) = (\det P) \pi(w).$$ 
Since $\cF'$ is a diagonal matrix, $x_j x_i = - x_i x_j$ in $C_K(\cF')$ for every $i < j$, so $x_{\sigma(1)} \cdots x_{\sigma(n)} = (\sgn \sigma)x_1 \cdots x_n$  in $C_K(\cF')$ for every $\s \in  \frak{S}_n$. It follows that 
$$
\Phi_P^{-1}(\pi(w)) = \pi'(w)/\det P = |\frak{S}_n| x_1 \cdots x_n / \det P \in K x_1 \cdots x_n,
$$
so
$$
Z(C_K(\phi(\cF))) = \begin{cases} K & \text{if $n$ is even},\\ K + K \pi(w) & \text{if $n$ is odd} \end{cases}
$$
by Lemma \ref{lem.cKf}. Let $g := \sum_{\sigma \in \frak S_n} (\sgn \sigma) x_{\sigma(1)} \cdots x_{\sigma(n)} \in C(F)_n$.  Since $\psi(g) = \pi(w)$ in $C_K(\phi(\cF))$, $R + Rg \subset \psi^{-1}(K + K \pi(w))$. 
By Lemma \ref{lem.HO}, $C(F) \cong C_R(\cF)$ is free as an $R$-module with a basis 
$$
\mathcal{B}: = \{ 1, x_{i_1} \cdots x_{i_t} \mid 1 \leq t \leq n, i_1 < \dots < i_t\},
$$ and $C_K(\phi(\cF))$ is free as a $K$-module with a basis $\psi(\mathcal{B})$. 
Since $x_jx_i = -x_i x_j + \cF_{ij}$ in $C(F)$ and $\cF_{ij}\in R\subset Z(C(F))$ for every $i < j$, the difference $x_{\sigma(1)} \cdots x_{\sigma(n)} - (\sgn \sigma) x_1 \cdots x_n\in C(F)$ is an $R$-linear combination of elements in $\mathcal{B}\setminus \{x_1 \cdots x_n \}$ for every $\s \in  \frak{S}_n$.  It follows that  
$
g- |\frak S_n | x_{1} \cdots x_{n} \in C(F)
$ 
is an $R$-linear combination of elements in $\mathcal{B}\setminus \{x_1 \cdots x_n \}$, so  
$$
\mathcal{B}' : = (\mathcal{B}\setminus \{x_1 \cdots x_n \}) \cup \{ g \} = \{ b_0 :=1, b_1 :=g ,b_2 ,\dots, b_{2^n-1}\}
$$ 
is an $R$-basis of $C(F)$, and  $\psi(\mathcal{B}')$ is an $K$-basis of $C_K(\phi(\cF))$. If $f  = \sum_{i=0}^{2^n-1} r_i b_i\in C(F)$ where $r_i \in R$ such that $\psi(f) \in K + K \pi(w)$, then $r_i = 0$ for $2\leq i \leq 2^n-1$, so $f\in R+Rg$.  It follows that $\psi^{-1}(K+ K \pi(w)) = R + Rg$, so 
$$
Z(C(F))=\begin{cases} R & \textnormal { if $n$ is even,} \\
 R + Rg & \textnormal { if $n$ is odd} \end{cases}
$$
by Lemma \ref{lem.cRK}.

Since 
\begin{align*}
(x_1 \cdots x_n)^2 =& (-1)^{\frac{n(n-1)}{2}} x_1^2 \cdots x_n^2 = (-1)^{\frac{n(n-1)}{2}} (\cF'_{11}/2) \cdots (\cF'_{nn}/2) \\
=&(-1)^{\frac{n(n-1)}{2}} \det \cF'/2^n
\end{align*}
in $C_K(\cF')$, 
\begin{align*}
\psi(g^2) =& \psi(g)^2= \pi(w)^2 = \phi_P(|\frak{S}_n|x_1 \cdots x_n /\det P)^2 \\
=&(n!)^2 \phi_P((x_1 \cdots x_n)^2)/(\det P)^2 = (-1)^{\frac{n(n-1)}{2}} \frac{(n!)^2}{2^n } \det \cF' / (\det P)^2\\
=& (-1)^{\frac{n(n-1)}{2}} \frac{(n!)^2}{2^n } \det (P^t \phi(\cF) P)/(\det P)^2 = (-1)^{\frac{n(n-1)}{2}} \frac{(n!)^2}{2^n} \det \phi(\cF)\\
=& (-1)^{\frac{n(n-1)}{2}} \frac{(n!)^2}{2^n} \det \cF \in R,
\end{align*}
that is, $g^2 = (-1)^{\frac{n(n-1)}{2}} \frac{(n!)^2}{2^n} \det \cF$ in $C(F)$. By adjusting $g$ by a scalar, we may take $g \in C(F)_n$ such that $g^2 = \det \cF$.
Note that if $n$ is odd, then $Z(C(F))_n=R_0g=kg$, so if $g'=a g\in Z(C(F))_n$ where $a\in k$ such that ${g'}^2=\det \cF=g^2$, then $g'=\pm g$.
\end{proof}

\begin{remark}  The above proof is much simpler and the non-trivial central element $g$ is more explicit compared, for example, to those in \cite{Se}.
\end{remark}

The following explicit description of the center can be regarded as a generalization of \cite[Lemma 3.6]{H}, which was essentially used to classify noncommutative conics in \cite{HMM}.  Note again that $k$ has characteristic $0$.

\begin{corollary} \label{cor.center}
If $S=C(F)\in \sC_{n, 0}$ for some normalized $F$, then 
 $Z(S)_2=\sum _{m=1}^nkx_m^2$.    
 \end{corollary}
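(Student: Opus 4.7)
Since $F$ is normalized, the relation $(F_i)_{jj} = 2\delta_{ij}$ forces $2x_j^2 = \sum_m (F_m)_{jj} y_m = 2y_j$ in $C(F)$, so $x_j^2 = y_j$. The $y_j$'s are central generators of $R = k[y_1, \ldots, y_n] \subseteq Z(C(F))$, which gives the easy inclusion $\sum_m kx_m^2 \subseteq Z(S)_2$. The content lies in the reverse inclusion. My plan is to take a general $z \in Z(S)_2$, compute $[z, x_k]$ using the commutator calculus for the Clifford relations, and translate the centrality conditions into a matrix equation that is then killed by base-point-freeness.

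Using the $k$-basis $\{x_i x_j\}_{i \leq j}$ of $C(F)_2$, write $z = \sum_m a_{mm} x_m^2 + \sum_{i<j} a_{ij} x_i x_j$. The diagonal terms are central, so $[z, x_k] = \sum_{i<j} a_{ij}[x_i x_j, x_k]$. Applying the Leibniz rule $[ab, c] = a[b, c] + [a, c]b$ together with the Clifford relation $[x_j, x_k] = \alpha_{jk} - 2 x_k x_j$, where $\alpha_{ab} := x_a x_b + x_b x_a = \sum_m (F_m)_{ab} y_m \in Z(C(F))$, a short computation yields
\begin{equation*}
[x_i x_j, x_k] = \alpha_{jk} x_i - \alpha_{ik} x_j \qquad (i \neq j).
\end{equation*}
Because $R_1 = R_3 = 0$ (as $\deg y_m = 2$), Lemma \ref{lem.HO} gives the decomposition $C(F)_3 = \bigoplus_\ell R_2 \cdot x_\ell \oplus \bigoplus_{i<j<k} k \cdot x_i x_j x_k$, and the identity above confirms that $[z, x_k]$ has no triple-product component. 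Extending $(a_{ij})$ to an antisymmetric matrix $\hat A \in M_n(k)$ by $\hat a_{ij} := a_{ij}$ for $i<j$ and $\hat a_{ji} := -\hat a_{ij}$, the $R_2 \cdot x_\ell$-component of $[z, x_k]$ packages as $\sum_j \hat a_{\ell j}\,\alpha_{jk} = \sum_m (\hat A F_m)_{\ell k}\,y_m$. Linear independence of $y_1, \ldots, y_n$ in $R_2$ then shows that $[z, x_k] = 0$ for all $k$ is equivalent to $\hat A F_m = 0$ for all $m = 1, \ldots, n$.

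It follows that every column of $\hat A$ lies in $\bigcap_m \ker F_m$. By Theorem \ref{thm.1} together with Remark \ref{rem.bpf}, the hypothesis $C(F) \in \sC_{n,0}$ is equivalent to $F$ being base-point-free, i.e., $\bigcap_m \mathcal{V}(f_m) = \emptyset$ in $\mathbb{P}^{n-1}$. Any nonzero $v \in \bigcap_m \ker F_m$ would then satisfy $f_m(v) = v^t F_m v = 0$ for all $m$, placing $[v] \in \bigcap_m \mathcal{V}(f_m)$ and contradicting base-point-freeness. Hence $\hat A = 0$, so $a_{ij} = 0$ for every $i<j$, giving $z \in \sum_m kx_m^2$. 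The main obstacle is the bookkeeping of the middle paragraph: identifying the $R_2 \cdot x_\ell$-coefficient of $[z, x_k]$ and recognizing that, after antisymmetric extension of the coefficients, the centrality conditions compress into the single matrix equation $\hat A F_m = 0$; once that is set up, base-point-freeness closes the argument at once. (Alternatively one could invoke Theorem \ref{thm.Kev} after separately verifying $\det \cF \neq 0$, but the direct commutator route avoids that verification.)
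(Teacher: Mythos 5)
Your proof is correct, and it takes a genuinely different route from the paper's. The paper deduces the corollary from Theorem \ref{thm.Kev}, the computation of the \emph{full} center of $C(F)$: one passes to the fraction field $K=Q(k[y_1,\dots,y_n])$, diagonalizes $\phi(\cF)$, and invokes the classical description of the center of a Clifford algebra over a field (Lemma \ref{lem.cKf}); the needed hypothesis $\det\cF\neq 0$ is extracted from the fact that $C(F)$ is a domain (cited to \cite{AL}) via Lemma \ref{lem.c4}. You instead stay entirely in degree $2$: expand a central element in the $k$-basis $\{x_ix_j\}_{i\le j}$ furnished by Lemma \ref{lem.HO}, verify $[x_ix_j,x_k]=\alpha_{jk}x_i-\alpha_{ik}x_j$ (which checks out), package centrality as $\hat A F_m=0$ for all $m$, and kill the antisymmetric matrix $\hat A$ by base-point-freeness. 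The one step you leave implicit is the passage from $\hat AF_m=0$ to ``each column of $\hat A$ lies in $\ker F_m$'': this uses $(\hat AF_m)^t=F_m^t\hat A^t=-F_m\hat A$, i.e.\ the symmetry of $F_m$ together with the antisymmetry of $\hat A$ --- a one-line verification, but it should be recorded, since $\hat AF_m=0$ by itself only says the columns of $F_m$ lie in $\ker\hat A$. As for what each approach buys: the paper's heavier computation is not wasted, since Theorem \ref{thm.Kev} is also needed to produce the degree-$n$ central element $g$ with $g^2=\det\cF$ used later; your argument is more elementary and avoids both the localization and the domain property of $C(F)$, at the price of invoking Theorem \ref{thm.1} and Remark \ref{rem.bpf} to get base-point-freeness from $C(F)\in\sC_{n,0}$. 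The direction of Theorem \ref{thm.1} you need is established in Theorem \ref{thm.n0nn} without reference to Corollary \ref{cor.center}, so there is no circularity.
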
 

 \begin{proof}  
Since a graded Clifford algebra $C(F)$ is the enveloping algebra of a {\it $(n,n)$-quadratic graded Lie algebra} (\cite[Definition 1.2]{AL}), 
$C(F)$ is a domain by \cite{AL}.
By Lemma \ref{lem.c1} and Lemma \ref{lem.bac2}, $C(F)\otimes _RK\cong C_R(\cF)\otimes _RK\cong C_K(\phi(\cF))$ is a domain where $R=k[y_1, \dots, y_n]$, $K=Q(R)$, $\phi:R\to K$ is the natural injection, and $\cF:=\sum _{m=1}^nF_my_m\in M_n(R)$, so $\det \phi(\cF)\neq 0$ by Lemma \ref{lem.c4}, hence $\det \cF\neq 0$.  Since $F$ is normalized, we have $y_m=x_m^2$ for $1\leq m\leq n$, so 
$$Z(S)_2=Z(C(F))_2 =R_2=\sum _{m=1}^nky_m=\sum _{m=1}^nkx_m^2$$
by Theorem \ref{thm.Kev}.   
\end{proof}

\subsection{Duality}

In this subsection, we will show 
that there are dualities between $\sC_{n, r}$ and $\sB_{n, n-r}$ for all $r=0, \dots, n$. 
The duality for $r=0$ is well-known (which, in particular, implies Theorem \ref{thm.1} in Introduction).  We give here a short proof for the convenience of the reader.

\begin{theorem} \label{thm.n0nn} 
The map $(-)^!:\sC_{n, 0}\to \sB_{n, n}$ is a bijection. 
\end{theorem}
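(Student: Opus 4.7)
The plan is to exploit the fact that $(-)^!$ is an involution on isomorphism classes of quadratic algebras: $(A^!)^! \cong A$ functorially. So it suffices to show that $(-)^!$ restricts to mutually inverse maps $\sC_{n,0} \rightleftarrows \sB_{n,n}$; explicitly, I will prove (a) $S \in \sC_{n,0} \Rightarrow S^! \in \sB_{n,n}$, and (b) $B \in \sB_{n,n} \Rightarrow B^! \in \sC_{n,0}$. Once both containments are established, the map $(-)^!: \sC_{n,0} \to \sB_{n,n}$ is bijective with inverse given by the same operation.

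For (a), given $S \in \sC_{n,0}$, Corollary \ref{cor.nor} produces a normalized $F$ with $S \cong C(F)$. Since $S$ is a quantum polynomial algebra, it is Koszul (Lemma \ref{lem.qpaKos}), hence quadratic, so Proposition \ref{prop.BSC}(3) yields $B(F)^! \cong C(F)$ and thus $S^! \cong B(F)$. Koszul duality (Lemma \ref{lem.Kos}(i)) gives $H_{B(F)}(t) = 1/H_S(-t) = (1+t)^n = (1-t^2)^n(1-t)^{-n}$. Since $B(F) = k[u_1, \dots, u_n]/(f_1, \dots, f_n)$ with $\deg f_m = 2$, this Hilbert series identity together with Lemma \ref{lem.rHn} forces $f_1, \dots, f_n$ to form a regular sequence, so $B(F) \in \sB_{n,n}$.

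For (b), given $B \in \sB_{n,n}$, Corollary \ref{cor.nor} produces a normalized $F$ with $B \cong B(F)$, and Proposition \ref{prop.BSC}(2) gives $B^! \cong S^F$. I will apply Lemma \ref{lem.Sm} to $(S^F)^! \cong B(F)$ to conclude that $S^F$ is an $n$-dimensional QPA: the Hilbert series $H_{B(F)}(t) = (1+t)^n$ and the Koszul property both follow from Remark \ref{rem.BCKos}, while the graded Frobenius property is the classical statement that a finite-dimensional commutative complete intersection algebra is Gorenstein. With $S^F$ now known to be an $n$-dimensional QPA, the natural map $S^F \to C(F)$ sending $x_i \mapsto x_i$ is well-defined because the defining relations of $S^F$ vanish in $C(F)$ upon substituting $y_m = x_m^2$ (valid for normalized $F$), and it is surjective since $y_m = x_m^2$ lies in its image. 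Comparing Hilbert series, $H_{S^F}(t) = (1-t)^{-n}$ by the QPA condition, and $H_{C(F)}(t) = (1-t)^{-n}$ follows from the $k[y_1, \dots, y_n]$-basis in Lemma \ref{lem.HO}, so the surjection is an isomorphism. Hence $C(F)$ is also an $n$-dimensional QPA, placing $B^! \cong S^F \cong C(F)$ in $\sC_{n,0}$.

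The main obstacle is the bookkeeping around the two Hilbert series comparisons: translating the QPA condition on $C(F)$ into the regular sequence condition on $B(F)$ via Koszul duality in (a), and identifying $S^F$ with $C(F)$ in (b) once $S^F$ is known to be a QPA. The only ingredient from outside the preceding material is the classical Gorenstein property of finite-dimensional commutative complete intersection algebras.
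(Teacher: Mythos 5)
Your direction (a) --- that $S \in \sC_{n,0}$ implies $S^! \in \sB_{n,n}$ --- is exactly the paper's argument (normalize $F$, invoke Proposition \ref{prop.BSC}(3), compute the Hilbert series by Koszul duality, and conclude by Lemma \ref{lem.rHn}). Where you genuinely diverge is direction (b): the paper simply cites Sj\"odin (\cite[Theorem 5]{S}; see also \cite[Proposition 7]{LeB}) for the well-definedness of $(-)^!:\sB_{n,n}\to\sC_{n,0}$, whereas you give a direct proof via Lemma \ref{lem.Sm}, using the classical fact that a finite-dimensional commutative complete intersection is Gorenstein of Krull dimension $0$, hence graded Frobenius. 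This is a legitimate and more self-contained route, since the cited results rest on essentially the same mechanism as \cite[Theorem 5.10]{Sm}. One wrinkle needs fixing: Lemma \ref{lem.Sm} is stated for a \emph{noetherian} connected graded algebra, and at the point where you apply it to $S^F$ you have not yet established that $S^F$ is noetherian --- only its quotient $C(F)$ is known to be noetherian (by \cite[Lemma 8.2]{ATV}), and noetherianity does not ascend from a quotient. The repair is a reordering of your last two steps: since $(S^F)^! = B(F)$ is Koszul by Remark \ref{rem.BCKos}, the algebra $S^F$ is Koszul with $H_{S^F}(t) = 1/H_{B(F)}(-t) = (1-t)^{-n}$ by Lemma \ref{lem.Kos}; your surjection $S^F \to C(F)$ (well-defined because $y_m = x_m^2$ in $C(F)$ for normalized $F$) together with $H_{C(F)}(t) = (1+t)^n(1-t^2)^{-n} = (1-t)^{-n}$ from Lemma \ref{lem.HO} then yields $S^F \cong C(F)$ \emph{before} any appeal to Lemma \ref{lem.Sm}, after which that lemma applies to the noetherian algebra $C(F) \cong S^F$. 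With that reordering your argument is complete and correct.
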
 

\begin{proof}
If $S\in \sC_{n, 0}$, then we may assume that $S=C(F)$ for some normalized $F$ by Corollary \ref{cor.nor}.   Since $S$ is quadratic, $S=C(F)\cong B(F)^!$ by Proposition \ref{prop.BSC} (3), so $S^!\cong B(F)$ is commutative. 
Since $S^!$ is quadratic and 
$$
H_{S^!}(t)=1/H_S(-t)=(1+t)^n=(1-t^2)^n/(1-t)^n
$$ 
by Lemma \ref{lem.Kos} (i), $S^!\cong k[u_1, \dots, u_n]/(f_1, \dots, f_n)$ for some $f_1, \dots, f_n\in k[u_1, \dots, u_n]_2$, so $S^!\in \sB_{n, n}$ by Lemma \ref{lem.rHn}.  
Thus the map $(-)^!:\sC_{n, 0}\to \sB_{n, n}$ is well-defined. 

It suffices to show that the map $(-)^!:\sB_{n, n} \to \sC_{n, 0}$ is also well-defined, and this
follows from \cite[Theorem 5]{S} (see also \cite[Proposition 7]{LeB}). 
\end{proof}

The following corollary is an interesting observation. 

\begin{corollary} \label{cor.clco} 
Let $S$ be a quantum polynomial algebra.  Then $S\in \sC_{n, 0}$ 
if and only if $S^!$ is commutative. 
\end{corollary}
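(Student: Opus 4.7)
The plan is to deduce the corollary from the bijection of Theorem~\ref{thm.n0nn} together with the Hilbert series computations already established for Koszul quadratic algebras. The forward direction is essentially immediate, while the reverse direction requires identifying $S^!$ as a member of $\sB_{n,n}$.

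For the ($\Rightarrow$) direction, assume $S \in \sC_{n,0}$. By Theorem~\ref{thm.n0nn}, the map $(-)^!\colon \sC_{n,0} \to \sB_{n,n}$ is well-defined, so $S^! \in \sB_{n,n}$. By the very definition of $\sB_{n,n}$, every algebra in it is commutative, so $S^!$ is commutative.

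For the ($\Leftarrow$) direction, suppose $S$ is an $n$-dimensional quantum polynomial algebra with $S^!$ commutative; I want to show $S \in \sC_{n,0}$. First, Lemma~\ref{lem.qpaKos} gives that $S$ is Koszul, so by Lemma~\ref{lem.Kos} the dual $S^!$ is also a Koszul (hence quadratic) algebra with
\[
H_{S^!}(t) = \frac{1}{H_S(-t)} = (1+t)^n = \frac{(1-t^2)^n}{(1-t)^n}.
\]
In particular, $\dim S^!_1 = n$, and since $S^!$ is quadratic and commutative, $S^! \cong k[u_1,\dots,u_n]/I$ for a graded ideal $I$ generated in degree $2$. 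A quick dimension count (comparing $\dim k[u_1,\dots,u_n]_2 = \binom{n+1}{2}$ with the coefficient $\binom{n}{2}$ of $t^2$ in $(1+t)^n$) shows that $I$ is generated by exactly $n$ linearly independent quadratic elements $f_1,\dots,f_n$. Since the Hilbert series of $S^!$ matches $(1-t^2)^n / (1-t)^n$, Lemma~\ref{lem.rHn} (applied with $d_i=2$) forces $f_1,\dots,f_n$ to be a regular sequence. Hence $S^! \in \sB_{n,n}$.

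Finally, apply Theorem~\ref{thm.n0nn} once more: the bijection $(-)^!\colon \sC_{n,0} \to \sB_{n,n}$ has an inverse given by the same construction (since $(A^!)^! \cong A$ for any quadratic algebra $A$). Therefore $S = (S^!)^! \in \sC_{n,0}$, as desired. The only place that requires genuine thought is the Hilbert series argument in the middle, and even that is almost automatic once one invokes Lemma~\ref{lem.rHn}; no new obstacle arises beyond what the earlier machinery of the paper has already cleared.
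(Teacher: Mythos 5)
Your proposal is correct and follows essentially the same route the paper intends: the forward direction is the well-definedness of $(-)^!\colon \sC_{n,0}\to\sB_{n,n}$ from Theorem~\ref{thm.n0nn}, and the reverse direction repeats the Hilbert-series/regular-sequence argument from that theorem's proof (via Lemma~\ref{lem.Kos}(i) and Lemma~\ref{lem.rHn}) to place $S^!$ in $\sB_{n,n}$ before dualizing back. No gaps; the dimension count identifying exactly $n$ quadratic relations is a nice explicit touch but is the same computation implicit in the paper's proof of Theorem~\ref{thm.n0nn}.
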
  



To show the duality for $r\geq 1$, the following proposition is essential.
Since not every sequence of linearly independent elements $f_1, \dots, f_r\in k[u_1, \dots, u_n]_2$ forms a regular sequence, it is rather surprising.  

\begin{proposition} \label{prop.sq} Let $S\in \sC_{n, 0}$.  
If $g_1, \dots , g_r\in Z(S)_2$ are linearly independent elements, then they form a regular sequence so that  $S/(g_1, \dots, g_r)\in \sC_{n, r}$.  In particular, if $g_1, \dots , g_n\in Z(S)_2$ are linearly independent elements, then $S/(g_1, \dots, g_n)$ is the exterior algebra.
\end{proposition}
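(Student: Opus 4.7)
The plan is to reduce to a normalized graded Clifford algebra and then exploit the explicit description of $Z(S)_2$ from Corollary \ref{cor.center} to compute $H_{S/(g_1, \dots, g_n)}(t)$ directly.

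First, by Corollary \ref{cor.nor} we may assume $S = C(F)$ for some normalized $F$, and by Corollary \ref{cor.center}, $Z(S)_2 = \sum_{m=1}^n k y_m$ is an $n$-dimensional vector space. Hence we can extend $g_1, \dots, g_r$ to a $k$-basis $g_1, \dots, g_n$ of $Z(S)_2$. Since $g_1, \dots, g_n$ and $y_1, \dots, y_n$ are $k$-bases of the same central subspace, the two-sided ideals they generate in $S$ coincide, so $S/(g_1, \dots, g_n) = S/(y_1, \dots, y_n)$.

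Next I would identify this quotient with the exterior algebra on $V = k x_1 + \cdots + k x_n$. By Lemma \ref{lem.c1} and Lemma \ref{lem.HO}, $S$ is free of rank $2^n$ as a module over $R = k[y_1, \dots, y_n]$, so $S/(y_1, \dots, y_n) = S \otimes_R k$ has $k$-dimension $2^n$. Since $F$ is normalized, $2 x_i^2 = \sum_m (F_m)_{ii} y_m = 2 y_i$ gives $x_i^2 = y_i \equiv 0$ in the quotient, and $x_i x_j + x_j x_i = \sum_m (F_m)_{ij} y_m \equiv 0$. Hence $S/(g_1, \dots, g_n) \cong \Lambda(V)$, yielding the last assertion.

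Finally, since $H_{\Lambda(V)}(t) = (1+t)^n$ and $H_S(t) = (1-t)^{-n}$ by Remark \ref{rem.BCKos}, one has
$$H_{S/(g_1, \dots, g_n)}(t) = (1+t)^n = (1-t^2)^n H_S(t),$$
so Lemma \ref{lem.rHn} implies $g_1, \dots, g_n$ is a regular sequence in $S$. Its prefix $g_1, \dots, g_r$ is then automatically a regular sequence, and $S/(g_1, \dots, g_r) \in \sC_{n, r}$ follows from Definition \ref{def.qqci}. The argument is largely bookkeeping; the only point requiring attention is the identification of $S/(g_1, \dots, g_n)$ with the exterior algebra via the freeness of $S$ over $R$ and the normalization of $F$, which simultaneously supplies the Hilbert-series equality needed for Lemma \ref{lem.rHn} and the ``in particular'' clause.
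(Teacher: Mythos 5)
Your proposal is correct and follows essentially the same route as the paper: normalize $S$ via Corollary \ref{cor.nor}, use Corollary \ref{cor.center} to extend $g_1,\dots,g_r$ to a basis of $Z(S)_2$ whose ideal equals $(x_1^2,\dots,x_n^2)$, identify the full quotient with the exterior algebra, and conclude via the Hilbert series criterion of Lemma \ref{lem.rHn}. The only difference is that you verify the exterior-algebra identification through the rank-$2^n$ freeness of $S$ over $R$, where the paper reads it off directly from the presentation of $S^F/(x_1^2,\dots,x_n^2)$; both are fine.
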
 

\begin{proof} 
By Corollary \ref{cor.nor}, we may assume that $S=S^F$ for some normalized $F$.  If $g_1, \dots , g_n\in Z(S)_2$ are linearly independent elements, then $\sum _{m=1}^nkg_m=\sum _{m=1}^nkx_m^2$, so $S/(g_1, \dots, g_n)=S^F/(x_1^2, \dots, x_n^2)$ is the exterior algebra.    If $g_1, \dots , g_r\in Z(S)_2$ are linearly independent elements, then there exist $g_{r+1}, \dots, g_n\in Z(S)_2$ such that 
 $\sum _{m=1}^nkg_m=\sum _{m=1}^nkx_m^2$, so 
 $$H_{S/(g_1, \dots , g_n)}(t)=(1+t)^n=(1-t^2)^n/(1-t)^n=(1-t^2)^nH_S(t).$$ 
By Lemma \ref{lem.rHn}, $g_1, \dots , g_n\in Z(S)_2$ is a regular sequence, so $g_1, \dots , g_r\in Z(S)_2$ is a regular sequence. 
\end{proof} 

\begin{corollary}  \label{cor.sq}
The map $(-)^!:\sC_{n, n}\to \sB_{n, 0}$ is well-defined. 
\end{corollary}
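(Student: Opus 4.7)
My plan is to show that, up to isomorphism, the only algebra in $\sC_{n,n}$ is the exterior algebra on $n$ generators, and that its quadratic dual is the polynomial algebra $k[u_1,\dots,u_n]$, which is (up to isomorphism) the unique element of $\sB_{n,0}$.

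First I would take $A = S/(f_1,\dots,f_n) \in \sC_{n,n}$ and pin down the ideal of relations. By Lemma~\ref{lem.rHn}, the regularity of $f_1,\dots,f_n$ gives $H_A(t) = (1-t^2)^n H_S(t) = (1+t)^n$, which forces $\dim_k \langle f_1,\dots,f_n\rangle_k = n$ inside $S_2$; in particular, the $f_i$ are linearly independent. By Corollary~\ref{cor.nor} I may then assume $S = S^F$ for some normalized $F$, and Corollary~\ref{cor.center} identifies $Z(S)_2 = \bigoplus_{m=1}^n k x_m^2$ as an $n$-dimensional space. Thus $f_1,\dots,f_n$ must span $Z(S)_2$, so $(f_1,\dots,f_n) = (x_1^2,\dots,x_n^2)$ as ideals in $S$.

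In this quotient, each defining relation $x_ix_j + x_jx_i - \sum_m (F_m)_{ij} x_m^2$ of $S^F$ collapses to $x_ix_j+x_jx_i = 0$. Hence $A$ is isomorphic to the exterior algebra $\L_n := k\langle x_1,\dots,x_n\rangle/(x_ix_j + x_jx_i)_{1 \le i \le j \le n}$, regardless of the choice of $F$ and of the particular central regular sequence. To finish, I would compute $\L_n^!$ directly: writing $\L_n = T(V)/(R)$ with $R = {\rm Sym}^2(V) \subset V\otimes V$, the orthogonal $R^\perp \subset V^*\otimes V^*$ is the space of skew-symmetric tensors (a one-line pairing computation), and so $\L_n^! = T(V^*)/(u_iu_j - u_ju_i)_{1\le i,j\le n} \cong k[u_1,\dots,u_n] \in \sB_{n,0}$.

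I do not anticipate a serious obstacle here: the entire argument rests on the explicit description of the center given by Corollary~\ref{cor.center}, together with the elementary Koszul duality between symmetric and exterior algebras. The only step that needs a bit of care is arguing that the $n$ given central regular degree-$2$ elements are forced to span $Z(S)_2$, which is where the Hilbert-series bookkeeping via Lemma~\ref{lem.rHn} does the real work.
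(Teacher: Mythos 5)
Your proof is correct and follows essentially the same route as the paper: the paper's proof simply cites Proposition \ref{prop.sq} for the fact that the $n$ linearly independent central quadrics must coincide with $\sum_m kx_m^2$ (via Corollary \ref{cor.center}), so that $A$ is the exterior algebra, whose quadratic dual is $k[u_1,\dots,u_n]$. The only difference is that you unpack that proposition inline and justify the linear independence of the regular sequence by the Hilbert-series count, a step the paper leaves implicit.
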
 

\begin{proof} For every $A=S/(g_1, \dots, g_n)\in \sC_{n , n}$ where $S\in \sC_{n, 0}$ and $g_1, \dots, g_n\in Z(S)_2$ is a regular sequence, since $g_1, \dots, g_n$ are linearly independent, $A$ is the exterior algebra by Proposition \ref{prop.sq}, so $A^!\cong k[x_1, \dots, x_n]\in \sB_{n, 0}$. 
\end{proof}

Let $A=T(V)/(R)$ be a quadratic algebra where $R\subset V\otimes V$ is a subspace.  Denote the quotient map $T(V)\to A$ by $u\mapsto \overline u$.  For a regular element $f\in Z(A)_2$, choose $w\in T(V)_2=V\otimes V$ such that $\overline w=f\in Z(A)_2$, and choose a subspace $W\subset V\otimes V$ such that $V\otimes V=kw\oplus R\oplus W$. Then we can show that  there exists $w^!\in V^*\otimes V^*$
such that 
$kw^!=
(R+W)^{\perp}=R^{\perp}\cap W^{\perp}$.


\begin{lemma}[{\cite[Section 4]{HY}, \cite[Corollary 1.4 (1)]{ST}}] \label{lem.ffs}  
Keeping the notations above, $f^!: =\overline {w^!}\in Z((A/(f))^!)_2$ such that $A^!= (A/(f))^!/(f^!)$. 
Moreover, if $A$ is Koszul, 
then $f^!\in (A/(f))^!$ is regular.  
\end{lemma}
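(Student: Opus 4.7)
The plan is to establish the three claims separately: the presentation $A^! = (A/(f))^!/(f^!)$, the centrality of $f^!$, and (assuming $A$ is Koszul) its regularity.

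First, I would fix a basis $\{w, r_i, w_j\}$ of $V \otimes V$ adapted to the decomposition $V \otimes V = kw \oplus R \oplus W$, with dual basis $\{w^*, r_i^*, w_j^*\}$ of $V^* \otimes V^*$. A routine check gives $R^\perp = \langle w^*, w_j^* \rangle$, $(R+kw)^\perp = \langle w_j^* \rangle$, and $R^\perp \cap W^\perp = kw^*$, so $w^! = \lambda w^*$ for some nonzero $\lambda \in k$ and $R^\perp = (R+kw)^\perp \oplus kw^!$. This immediately yields $A^! = T(V^*)/(R^\perp) = (A/(f))^!/(f^!)$ as graded algebras.

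The main content is the centrality of $f^!$ in $(A/(f))^!_2$. Since $(A/(f))^!$ is generated in degree $1$, it suffices to show $w^! \otimes y - y \otimes w^! \in I_3 := (R+kw)^\perp \otimes V^* + V^* \otimes (R+kw)^\perp$ for every $y \in V^*$. Dualizing (using that $(J \otimes V^*)^\perp = J^\perp \otimes V$ in $V^{\otimes 3}$ for any $J \subset V^* \otimes V^*$), this is equivalent to $(w^! \otimes y - y \otimes w^!)(t) = 0$ for every $t \in I_3^\perp = ((R+kw) \otimes V) \cap (V \otimes (R+kw))$. Writing such a $t$ in two ways, $t = t'_L + w \otimes v_L$ with $t'_L \in R \otimes V$, $v_L \in V$, and $t = t'_R + v_R \otimes w$ with $t'_R \in V \otimes R$, $v_R \in V$, and normalizing so that $w^!(w) = 1$, a direct calculation reduces the desired identity to $v_L = v_R$.

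The main obstacle—where the centrality and regularity of $f$ enter symmetrically—is this last step. Subtracting the two decompositions of $t$ gives $w \otimes v_L - v_R \otimes w \in R \otimes V + V \otimes R$, which under $T(V) \twoheadrightarrow A$ becomes $f\,\bar v_L - \bar v_R\,f = 0$ in $A_3$. Centrality of $f$ converts this into $(\bar v_L - \bar v_R)\,f = 0$, and then regularity of $f$ forces $\bar v_L = \bar v_R$ in $A_1 = V$, hence $v_L = v_R$, completing the proof of centrality.

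Finally, for regularity under the Koszul hypothesis: since $f$ is a regular normal element of degree $2$, Lemma~\ref{lem.Kos}(ii) gives that $A/(f)$ is Koszul. Koszul duality then yields $H_{A^!}(t) = 1/H_A(-t)$ and $H_{(A/(f))^!}(t) = 1/H_{A/(f)}(-t)$, while Lemma~\ref{lem.nrHn} gives $H_{A/(f)}(t) = (1-t^2)H_A(t)$. Combining, $H_{A^!}(t) = (1-t^2)H_{(A/(f))^!}(t)$. Since $f^!$ is central, hence normal, applying Lemma~\ref{lem.nrHn} in the converse direction to the presentation $A^! = (A/(f))^!/(f^!)$ shows that $f^!$ is regular in $(A/(f))^!$.
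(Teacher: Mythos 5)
The paper does not actually prove this lemma---it is quoted from \cite[Section 4]{HY} and \cite[Corollary 1.4(1)]{ST}---so your write-up is supplying a proof rather than reproducing one, and I checked it on its own merits: it is correct. The linear-algebra bookkeeping is right ($R^\perp=(R+kw)^\perp\oplus kw^!$ gives the presentation $A^!=(A/(f))^!/(f^!)$ immediately), and the reduction of centrality to the vanishing of $w^!\otimes y-y\otimes w^!$ on $((R+kw)\otimes V)\cap(V\otimes(R+kw))$ is the standard duality between relations in degree $3$ of a quadratic algebra and of its dual. The decisive step---decomposing $t=t'_L+w\otimes v_L=t'_R+v_R\otimes w$ (which is legitimate since $R\cap kw=0$), evaluating to get $y(v_L)-y(v_R)$, and then using $f\bar v_L-\bar v_Rf=0$ in $A_3$ together with centrality and regularity of $f$ to force $v_L=v_R$---is exactly where the hypotheses on $f$ must enter, and you use them correctly. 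The final Hilbert-series argument (Koszulity of $A/(f)$ via Lemma \ref{lem.Kos}(ii), the identity $H_{A^!}(t)=(1-t^2)H_{(A/(f))^!}(t)$, and the converse direction of Lemma \ref{lem.nrHn} applied to the normal element $f^!$) is the same mechanism used in \cite{ST}. The one point worth making explicit in a final version is that commuting with all of $(A/(f))^!_1$ suffices for centrality because the algebra is generated in degree $1$; you state this, so no gap remains.
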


\begin{lemma} \label{lem.f1r} 
Let $A$ be a Koszul algebra.  For every central regular sequence $f_1, \dots, f_r\in Z(A)_2$, there exists a central regular sequence $f_1^!, \dots, f_r^!\in Z((A/(f_1, \dots, f_r))^!)_2$ such that $A^!=(A/(f_1, \dots, f_r))^!/(f_1^!, \dots, f_r^!)$.   
\end{lemma}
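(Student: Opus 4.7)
The plan is to construct the dual sequence directly inside $C^! := (A/(f_1,\dots,f_r))^!$ by a natural generalization of the $r=1$ construction of Lemma \ref{lem.ffs}, and to verify centrality, regularity, and the quotient relation simultaneously. Write $A = T(V)/(R)$ and choose lifts $w_i \in V \otimes V$ of $f_i$ that are linearly independent modulo $R$; then fix a complement $W$ with $V \otimes V = R \oplus M \oplus W$, where $M := \sum_{i=1}^r kw_i$. Let $w_i^! \in V^* \otimes V^*$ be the dual-basis elements characterized by $w_i^!(w_j) = \delta_{ij}$ and $w_i^!|_{R+W} = 0$. Since $C^! = T(V^*)/(R+M)^\perp$ while $R^\perp = (R+M)^\perp \oplus \bigoplus_i kw_i^!$, the identity $A^! = C^!/(w_1^!,\dots,w_r^!)$ is immediate from the construction of the quadratic dual.

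The main step is to verify $w_i^! \in Z(C^!)_2$. Using the standard identification $V^* \otimes (R+M)^\perp + (R+M)^\perp \otimes V^* = \bigl(V \otimes (R+M) \cap (R+M) \otimes V\bigr)^\perp$ inside $(V^{\otimes 3})^*$, centrality of $w_i^!$ is equivalent to requiring that $w_i^! \otimes c - c \otimes w_i^!$ annihilate every $\alpha \in V \otimes (R+M) \cap (R+M) \otimes V$ for every $c \in V^*$. Writing such an $\alpha$ in two ways as $\alpha = \sum_k u_k \otimes z_k = \sum_l y_l \otimes v_l$ with $z_k = r_k' + \sum_i \mu_{ik} w_i$ and $y_l = r_l + \sum_i \lambda_{il} w_i$, and setting $\hat u_i := \sum_k \mu_{ik} u_k$ and $\hat v_i := \sum_l \lambda_{il} v_l$, the dual-basis property gives $(w_i^! \otimes c - c \otimes w_i^!)(\alpha) = c(\hat v_i - \hat u_i)$, while equating the two expansions of $\alpha$ and cancelling the $R$-terms produces $\sum_i(\hat u_i \otimes w_i - w_i \otimes \hat v_i) \in V \otimes R + R \otimes V$. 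The centrality of each $f_j$ in $A$ (read as $w_j \otimes v \equiv v \otimes w_j \pmod{V \otimes R + R \otimes V}$ for $v \in V$) then allows me to swap $\hat u_i \otimes w_i$ with $w_i \otimes \hat u_i$, reducing to $\sum_i w_i \otimes (\hat u_i - \hat v_i) \in V \otimes R + R \otimes V$. At this point the regularity of $(f_1,\dots,f_r)$ enters: the Koszul complex of a central regular sequence in degree $3$ shows that the multiplication map $V^r \to A_3$, $(v_1,\dots,v_r) \mapsto \sum_i f_i v_i$, is injective, which rephrases as $M \otimes V \cap (V \otimes R + R \otimes V) = 0$. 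Therefore $\hat u_i = \hat v_i$ for every $i$, as required.

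Once centrality is established, the regularity of $(w_1^!,\dots,w_r^!)$ in $C^!$ will follow by a Hilbert series comparison. Both $A$ and $C$ are Koszul (the latter by iterated application of Lemma \ref{lem.Kos}(ii) to the central regular sequence), so Koszul duality together with Lemma \ref{lem.rHn} yields $H_{A^!}(t) = (1-t^2)^r H_{C^!}(t)$. Since the $w_i^!$ are central (hence normal) and $A^! = C^!/(w_1^!,\dots,w_r^!)$, the converse direction of Lemma \ref{lem.rHn} forces $(w_1^!,\dots,w_r^!)$ to be a regular sequence. The hard part will be the centrality step, where both the algebraic identity arising from equating the two expansions of $\alpha$ and the simultaneous use of the centrality of all the $f_j$'s together with their regularity must be carefully combined; the naive approach of inducting on $r$ and lifting a central element of $(A/(f_1))^! = C^!/(g_2,\dots,g_r)$ to a central element of $C^!$ is the source of the main difficulty, and is precisely what the direct construction above bypasses.
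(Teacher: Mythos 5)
Your proposal is correct, but it takes a genuinely different route from the paper. The paper argues by induction on $r$: it applies the $r=1$ case (Lemma \ref{lem.ffs}, quoted from the literature) twice --- once to $A\twoheadrightarrow A/(f_1)$ to produce $f_1^!$, and once to $A'':=A/(f_2,\dots,f_r)\twoheadrightarrow A/(f_1,\dots,f_r)$ to produce a central element $(\bar f_1)^!$ of $B:=(A/(f_1,\dots,f_r))^!$ --- and then matches the two constructions by fixing a common complement $W$ with $V\otimes V=kw\oplus R\oplus R'\oplus W$, so that $(\bar f_1)^!$ maps onto $f_1^!$ in $(A/(f_1))^!=B/(f_2^!,\dots,f_r^!)$; this is exactly how it circumvents the lifting problem you correctly identify as the obstruction to a naive induction. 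You instead build all $r$ dual elements simultaneously from one decomposition $V\otimes V=R\oplus M\oplus W$ and verify centrality by hand in degree $3$; your computation is sound (I checked that $(w_i^!\otimes c-c\otimes w_i^!)(\alpha)=c(\hat v_i-\hat u_i)$, that the swap via centrality of the $f_j$ reduces the problem to $\sum_i w_i\otimes(\hat u_i-\hat v_i)\in V\otimes R+R\otimes V$, and that the needed vanishing $M\otimes V\cap(V\otimes R+R\otimes V)=0$ holds), and the Hilbert series argument for regularity via Lemmas \ref{lem.Kos} and \ref{lem.rHn} is the same device the paper uses elsewhere. Your approach buys self-containedness (it reproves the centrality content of Lemma \ref{lem.ffs} rather than citing it) and makes explicit exactly where centrality and regularity of the original sequence enter; the paper's buys brevity modulo the quoted $r=1$ case, at the cost of the bookkeeping needed to match choices across the induction. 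One small improvement: justify $M\otimes V\cap(V\otimes R+R\otimes V)=0$ elementarily rather than by appeal to a noncommutative Koszul complex --- if $\sum_i f_it_i=0$ in $A_3$ with $t_i\in A_1$, then $\bar f_r\bar t_r=0$ in $A/(f_1,\dots,f_{r-1})$, so $t_r\in(f_1,\dots,f_{r-1})_1=0$ since that ideal is generated in degree $2$, and one descends inductively; this also shows, as you implicitly need, that a regular sequence in $A_2$ is linearly independent, so $\dim M=r$.
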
  

\begin{proof} We prove by induction on $r$.  The result holds for $r=1$ by Lemma \ref{lem.ffs}.   Suppose that the result holds for $r-1$.  Let $A':=A/(f_1)$, $A''=A/(f_2, \dots, f_r)$ and $B:=(A/(f_1, \dots, f_r))^!=(A'/(f_2, \dots, f_r))^!=(A''/(f_1))^!$.  By Lemma \ref{lem.Kos} (ii), $A', A''$ are Koszul algebras.  

Since $f_1\in Z(A)_2$ is regular, there exists a regular element $f_1^!\in Z((A')^!)_2$ such that $A^!=(A')^!/(f_1^!)$ by Lemma \ref{lem.ffs}.  Since 
$f_2, \dots, f_r\in Z(A')_2$ is a central regular sequence,  there exists a central regular sequence $f_2^!, \dots, f_r^!\in Z(B)_2$ such that $(A')^!=B/(f_2^!, \dots, f_r^!)$ by induction.  

On the other hand, since 
$f_2, \dots, f_r, f_1\in Z(A)_2$ is a central regular sequence by Lemma \ref{lem.rHn}, $\bar f_1\in Z(A'')_2$ is regular, so there exists $(\bar f_1)^!\in Z(B)_2$ such that $(A'')^!= B/((\bar f_1)^!)$ by Lemma \ref{lem.ffs}. 

Let $A=T(V)/(R)$ and $A''=T(V)/(R\oplus R')$ where $R, R'\subset V\otimes V$ are subspaces, 
and choose $w\in V\otimes V$ such that $\overline w=f_1\in A$ and a subspace $W\subset V\otimes V$ such that $V\otimes V=kw\oplus R\oplus R'\oplus W$.  By construction of $f_1^!$ and $(\bar f_1)^!$ in 
Lemma \ref{lem.ffs}, 
$$\overline {(\bar f_1)^!}=\overline {w^!}=f_1^!\in Z((A')^!)_2=Z(B/(f_2^!, \dots, f_r^!))_2$$ 
is regular where $w^!\in V^*\otimes V^*$ such that $kw^!=(R\oplus R'\oplus W)^{\perp}$,  
so  $f_2^!, \dots, f_r^!, {\bar f_1}^!, \in Z(B)_2$ is a central regular sequence such that
\begin{align*}
A^! & =(A')^!/(f_1^!)=(B/(f_2^!, \dots, f_r^!))/(f_1^!) \\
& =B/(f_2^!, \dots, f_r^!, (\bar f_1)^!)=(A/(f_1, \dots, f_r))^!/((\bar f_1)^!, f_2^!, \dots, f_r^!). \qedhere
\end{align*}
\end{proof}

The following diagram may be useful to understand the above proof.  
$$\begin{array}{ccccccccccc}
f_1 & \in & A & \twoheadrightarrow & A' & & A^! &  \twoheadleftarrow & (A')^! & \ni & f_1^! \\
\downarrow & & \rotatebox{90}{$\twoheadleftarrow$} &  & \rotatebox{90}{$\twoheadleftarrow$} & & \rotatebox{90}{$\twoheadrightarrow$} & & \rotatebox{90}{$\twoheadrightarrow$} & & \uparrow \\
\bar f_1 & \in & A'' & \twoheadrightarrow & B^! & & (A'')^! & \twoheadleftarrow & B & \ni & (\bar f_1)^!
\end{array}$$

\begin{remark} \label{rem.cseq} 
In the literature, it may be more standard to define a central sequence $f_1, \dots, f_r\in A$ by the condition $\bar {f_i}\in Z(A/(f_1, \dots, f_{i-1}))$ for every $i=1, \dots, r$ (eg. \cite[Definition 1.9]{CV1}, \cite{LeB}).   If we adopt this definition, then the above lemma ``Let $A$ be a Koszul algebra.  For every central regular sequence $f_1, \dots, f_r\in A_2$, there exists a central regular sequence $f_r^!, \dots, f_1^!\in (A/(f_1, \dots, f_r))^!_2$ such that $A^!=(A/(f_1, \dots, f_r))^!/(f_r^!, \dots, f_1^!)$'' can be proved by a simple induction of Lemma \ref{lem.ffs}.   In this case, the order of the sequence may be important.  For example, if 
$$A=k\<x, y, z\>/(yz-zy-x^2, zx-xz, xy-yx),$$ 
then $x^2, y^2\in A_2$ is a central regular sequence, but $y^2, x^2\in A_2$ is not by the above definition.   Our definition of a central sequence, which requires $f_1, \dots , f_r\in Z(A)$, is useful since then a central regular sequence is preserved by any permutation (see Lemma \ref{lem.rHn}).  Moreover, if $A\in \sC_{n, 0}$, 
then any subsequence of a central regular sequence $f_1, \dots, f_r\in Z(A)_2$ is also a central regular sequence by Proposition \ref{prop.sq} by our definition, which is useful in the proof of the next theorem.  
\end{remark}

The following theorem may be known by experts,  but, to our knowledge, it has not appeared in the literature except for the case $r=0, n$. 
Since there are some delicate issues (see Remark \ref{rem.cseq}, Remark \ref{rem.cseq2}), we give a detailed proof in this paper. 


\begin{theorem} \label{thm.sq} 
The map $(-)^!:\sC_{n, r}\to \sB_{n, n-r}$ is a bijection for every $r=0, \dots, n$.  
\end{theorem}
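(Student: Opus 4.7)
The strategy is to show that $(-)^!$ is well-defined in both directions $\sC_{n,r} \to \sB_{n,n-r}$ and $\sB_{n,n-r} \to \sC_{n,r}$; bijectivity then follows at once from the involutive property $(A^!)^! \cong A$ of quadratic duality.

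For the forward direction, let $A = S/(f_1,\dots,f_r) \in \sC_{n,r}$. By Corollary~\ref{cor.nor} I may assume $S \cong S^F$ with $F$ normalized, in which case the quadratic relation space $R_S \subset V \otimes V$ is spanned by the symmetric tensors $\{x_i \otimes x_j + x_j \otimes x_i - \sum_m(F_m)_{ij}\, x_m \otimes x_m\}_{i<j}$, so $R_S \subset \mathrm{Sym}^2(V)$. (This matches Corollary~\ref{cor.clco}, since $\mathrm{Sym}^2(V)^\perp = \Lambda^2(V^*)$ contains every commutator.) By Corollary~\ref{cor.center}, $Z(S)_2 = \sum_m k x_m^2$, so each $f_i$ admits the symmetric lift $\tilde f_i = \sum_m c_{im}\, x_m \otimes x_m \in \mathrm{Sym}^2(V)$. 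Hence $R_A := R_S + \sum_i k \tilde f_i \subset \mathrm{Sym}^2(V)$, giving $R_A^\perp \supset \Lambda^2(V^*)$, so every commutator $u_i u_j - u_j u_i$ is a relation of $A^!$; in other words, $A^!$ is commutative. By Remark~\ref{rem.BCKos}, $A$ is Koszul with $H_A(t) = (1-t^2)^r/(1-t)^n$, so $H_{A^!}(t) = 1/H_A(-t) = (1-t^2)^{n-r}/(1-t)^n$. Writing the commutative quadratic algebra $A^!$ as $k[u_1,\dots,u_n]/(g_1,\dots,g_s)$ with linearly independent quadratic $g_i$, the Hilbert-series identity and Lemma~\ref{lem.rHn} force $s = n-r$ and that the $g_i$ form a regular sequence, so $A^! \in \sB_{n,n-r}$.

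For the backward direction, let $B = k[u_1,\dots,u_n]/(g_1,\dots,g_{n-r}) \in \sB_{n,n-r}$. When $r \geq 1$, $B$ is a commutative graded Cohen-Macaulay algebra of Krull dimension $r > 0$, so a standard prime-avoidance argument in degree $2$ (each associated prime misses a nontrivial part of $B_2$) yields a homogeneous non-zero-divisor of degree $2$; iterating, I extend $g_1,\dots,g_{n-r}$ to a full regular sequence $g_1,\dots,g_n$ in $k[u_1,\dots,u_n]_2$. Setting $C := k[u_1,\dots,u_n]/(g_1,\dots,g_n) \in \sB_{n,n}$, Theorem~\ref{thm.n0nn} gives $C^! \in \sC_{n,0}$. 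The sequence $\bar g_{n-r+1},\dots,\bar g_n \in Z(B)_2$ is a central regular sequence in the Koszul algebra $B$, so Lemma~\ref{lem.f1r} produces a central regular sequence $\bar g_{n-r+1}^!,\dots,\bar g_n^! \in Z(C^!)_2$ of length $r$ with $B^! = C^!/(\bar g_{n-r+1}^!,\dots,\bar g_n^!)$, exhibiting $B^! \in \sC_{n,r}$.

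Bijectivity is immediate from $(A^!)^! \cong A$ applied to both directions. The main obstacle is the forward step: one must translate the Clifford structure of $S$ into the symmetric-tensor containment $R_A \subset \mathrm{Sym}^2(V)$ that forces $A^!$ to be commutative, for which the normalization and the explicit description of $Z(S)_2$ (Corollaries~\ref{cor.nor} and~\ref{cor.center}) are precisely the tools needed. A secondary subtlety is ensuring, in the backward direction, that the extension of the regular sequence can be chosen in degree $2$ rather than in arbitrary positive degrees.
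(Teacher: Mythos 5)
Your proof is correct, and while your treatment of the direction $\sB_{n,n-r}\to\sC_{n,r}$ coincides with the paper's (extend $g_1,\dots,g_{n-r}$ to a full regular sequence of quadrics, pass to $C\in\sB_{n,n}$, and peel the extra elements back off with Lemma \ref{lem.f1r}; you even supply the prime-avoidance justification for the degree-$2$ extension that the paper leaves implicit), your argument for the forward direction $\sC_{n,r}\to\sB_{n,n-r}$ is genuinely different. The paper never shows directly that $A^!$ is commutative: it uses Proposition \ref{prop.sq} to extend $g_1,\dots,g_r$ to a central regular sequence of length $n$ in $Z(S)_2$, identifies $S/(g_1,\dots,g_n)$ with the exterior algebra (Corollary \ref{cor.sq}), and then runs Lemma \ref{lem.f1r} downward from $k[u_1,\dots,u_n]$. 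You instead observe that for normalized $F$ the relation space satisfies $R_S\subset{\rm Sym}^2(V)$ and that, by Corollary \ref{cor.center}, each $f_i$ lifts to a symmetric tensor $\sum_m c_{im}\,x_m\otimes x_m$, so $R_A\subset{\rm Sym}^2(V)$ and hence $R_A^{\perp}$ contains all commutators, making $A^!$ commutative outright; the Koszul Hilbert-series identity $H_{A^!}(t)=(1-t^2)^{n-r}/(1-t)^n$ together with Lemma \ref{lem.rHn} then pins down both the number of quadratic relations and their regularity. Both routes hinge on the explicit computation $Z(S)_2=\sum_m kx_m^2$, but yours buys a shorter, more self-contained forward direction that bypasses Proposition \ref{prop.sq}, Corollary \ref{cor.sq}, and one application of Lemma \ref{lem.f1r}, at the cost of not exhibiting the dual regular sequence $g_{r+1}^!,\dots,g_n^!$ explicitly (which the paper's route does, and which is reused elsewhere, e.g.\ in Corollary \ref{cor.123}).
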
 

\begin{proof}
It is enough to show that the map 
$(-)^!:\sB_{n, n-r}\to \sC_{n, r}$ and
the map 
$(-)^!:\sC_{n, r}\to \sB_{n, n-r}$ are well-defined
for every  $0\leq r\leq n$.   

For every $B\in \sB_{n, n-r}$, there exists a regular sequence $f_1, \dots, f_r\in B_2$ such that $B/(f_1, \dots, f_{r})\in \sB_{n. n}$,  so $S:=(B/(f_1, \dots, f_r))^!
\in \sC_{n, 0}$ by Theorem \ref{thm.n0nn}.  
By Lemma \ref{lem.f1r},  there exists a central regular sequence $f_1^!, \dots, f_r^!\in Z(S)_2$ such that 
$B^! =
S/(f_1^!, \dots, f_r^!)\in \sC_{n, r}$, 
so 
$(-)^!:\sB_{n, n-r}\to \sC_{n, r}$ is well-defined for every $0\leq r\leq n$. 
 
Similarly, for every $A=S/(g_1, \dots, g_r)\in \sC_{n, r}$ where $S\in \sC_{n, 0}$ and $g_1, \dots, g_r\in Z(S)_2$ is a central regular sequence, there exists a central regular sequence $g_1, \dots, g_r, g_{r+1}, \dots, g_n\in Z(S)_2$ such that $A/(g_{r+1}, \dots, g_{n})=S/(g_1, \dots, g_r, g_{r+1}, \dots, g_n)\in \sC_{n. n}$ by Proposition \ref{prop.sq},  so $(A/(g_{r+1}, \dots, g_n))^!=k[u_1, \dots, u_n]
\in \sB_{n, 0}$ by Corollary \ref{cor.sq}.  
Since $g_{r+1}, \dots, g_n\in Z(A)_2$ is a central regular sequence by Proposition \ref{prop.sq}, there exists a regular sequence $g_{r+1}^!, \dots, g_n^!\in k[u_1, \dots, u_n]_2$ such that 
$A^! =
k[u_1, \dots, u_n]/(g_{r+1}^!, \dots, g_n^!)\in \sB_{n, n-r}$ by Lemma \ref{lem.f1r}, 
so $(-)^!:\sC_{n, r}\to \sB_{n, n-r}$ is well-defined for every $0\leq r\leq n$.  
\end{proof}

\begin{remark} \label{rem.cseq2}
In addition to our nonstandard definition of a central regular sequence (Remark \ref{rem.cseq}), another key point to prove the above theorem is that we need to show that there is no central elements of degree 2 in $C(F) \in \sC_{n, 0}$ other than $\sum_{m=1}^nky_m$, to make sure that  the exterior algebra is the only algebra in $\sC_{n, n}$ (see  Corollary \ref{cor.sq}). 
That is why we computed the center of $C(F)$ explicitly in Section 3.3. 
\end{remark}

\begin{remark} \label{rem.Sesy} 
{\it Segre symbols} are classical notions which are useful to classify algebras of the form $k[x_1, \dots, x_n]/(f_1,f_2)$ where $f_1,f_2 \in k[x_1, \dots, x_n]_2$ (see \cite[Section XII]{HP}). For $B, B'\in \sB_{n, 2}$, if $B\cong B'$, then they have the same Segre symbol, and the converse holds for $n=3$.  
It is known that there are exactly $6$ distinct Segre symbols for $\sB_{3,2}$ (see Table \ref{tab.pa} of Section 5), so $\#(\sB_{3, 2})=6$.
\end{remark}

\begin{corollary} \label{cor.123}
$\#(\sA_{3, 1})=9, \#(\sA_{3, 2})=9, \#(\sA_{3, 3})=\infty$.  
\end{corollary}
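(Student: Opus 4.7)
The plan is to compute $\#\sC_{3,r}$ via Theorem \ref{thm.sq} and then enlarge the count to $\#\sA_{3,r}$ by invoking the classification of noncommutative conics in Calabi-Yau quantum projective planes from \cite{HMM}, closing with an infinite-family argument for $r=3$.

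Since $n=3$ is odd, Corollary \ref{cor.symd} gives $\sC_{3,r}\subseteq \sA_{3,r}$ for every $r$, and Theorem \ref{thm.sq} supplies the bijection $\sC_{3,r}\xleftrightarrow{1:1}\sB_{3,3-r}$. On the commutative side the count is elementary: $\#\sB_{3,0}=1$ trivially, $\#\sB_{3,1}=3$ because a single non-zero ternary quadric is classified by its rank in $\{1,2,3\}$ (Sylvester's theorem, Lemma \ref{lem.sy11}), and $\#\sB_{3,2}=6$ by the Segre-symbol analysis of Remark \ref{rem.Sesy}. Hence
\[
\#\sC_{3,1}=6,\qquad \#\sC_{3,2}=3,\qquad \#\sC_{3,3}=1.
\]

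To pass from $\sC_{3,1}$ to $\sA_{3,1}$, the plan is to appeal to the complete classification of noncommutative conics in Calabi-Yau quantum projective planes established in \cite{HMM}, which produces exactly nine isomorphism classes; the three new classes beyond the Clifford ones are separated from the Clifford ones by the point-variety/characteristic-variety pair $(E_A,X_A^{(3)})$ of Theorems \ref{thm.SsA} and \ref{thm.XA}. For $\sA_{3,2}$, a dual argument via the Koszul duality formalism of Section 3.1 reduces the count to a parallel enumeration, and the invariants of Section 4 should separate the three Clifford classes from six additional Calabi-Yau ones, giving $9$ again. For $\#\sA_{3,3}=\infty$, the plan is to exhibit a one-parameter family of pairwise non-isomorphic length-$3$ quotients: Proposition \ref{prop.sq} rules out a Clifford source (all such quotients collapse to the exterior algebra), so one must instead start from non-Clifford Calabi-Yau ambient algebras $S$ carrying three linearly independent central quadrics, and use the geometric pair $\cP(A)=(E_A,\sigma_A)$ supplied by Theorem \ref{thm.SsA} as a distinguishing isomorphism invariant.

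The main obstacle is the non-Clifford step. Corollary \ref{cor.center} describes $Z(S)_2$ explicitly only for Clifford $S$, so no analogous closed-form is available for arbitrary Calabi-Yau $S$; the enumeration of the extra classes accordingly has to be carried out case-by-case along the ATV classification of $3$-dimensional quantum polynomial algebras, and the delicate point is certifying that exactly three (respectively six) new isomorphism classes appear in $\sA_{3,1}\setminus\sC_{3,1}$ (respectively $\sA_{3,2}\setminus\sC_{3,2}$), together with the production of an explicit infinite family in $\sA_{3,3}$ whose members are distinguished by the geometric pair $\cP(A)$.
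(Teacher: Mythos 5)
Your computation of the Clifford side, $\#(\sC_{3,1})=6$, $\#(\sC_{3,2})=3$, $\#(\sC_{3,3})=1$ via Theorem \ref{thm.sq}, is correct and is exactly what the paper does. The gap is in passing from $\sC_{3,r}$ to $\sA_{3,r}$: the one idea you are missing is that for $r=1,2,3$ one has the \emph{disjoint union} $\sA_{3,r}=\sB_{3,r}\sqcup\sC_{3,r}$, i.e.\ the ``extra'' classes beyond the Clifford ones are precisely the commutative ones. For $r=1$ this is \cite[Corollary 3.8]{HMM} together with \cite[Corollary 4.8]{SV}; for $r=2,3$ it follows by a one-line induction (write $A=(S/(f_1))/(f_2)$ with $S/(f_1)\in\sA_{3,1}=\sB_{3,1}\sqcup\sC_{3,1}$, so $A$ is commutative or Clifford), with disjointness coming from Proposition \ref{prop.sq}: a Clifford member admits a further central quadric quotient equal to the (noncommutative) exterior algebra. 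Once you have this, the counts are $\#(\sB_{3,1})+\#(\sC_{3,1})=3+6$, $\#(\sB_{3,2})+\#(\sC_{3,2})=6+3$, and $\#(\sB_{3,3})+\#(\sC_{3,3})=\infty+1$. You instead leave the enumeration of the non-Clifford classes as an unexecuted case-by-case analysis along the ATV classification (``should separate''), which is precisely the part that needs an argument.

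Your plan for $\#(\sA_{3,3})=\infty$ would moreover fail as stated. The infinite family lives on the \emph{commutative} side: $\#(\sB_{3,3})=\#(\sC_{3,0})=\infty$ by Theorem \ref{thm.sq} and Example \ref{exm.3-dimgcliff} (the Sklyanin/elliptic-curve family), so no new noncommutative construction is needed. Your proposed distinguishing invariant, the geometric pair $\cP(A)$ of Theorem \ref{thm.SsA}, is only established for Clifford quadratic complete intersections, and in any case cannot separate the members of the actual infinite family: for a base-point-free net of conics the point variety of $k[u_1,u_2,u_3]/(f_1,f_2,f_3)$ is empty. The correct invariant is the quadratic dual, which recovers the Clifford quantum projective plane and hence the elliptic curve.
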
 

\begin{proof} By \cite [Corollary 3.8]{HMM} and \cite[Corollary 4.8]{SV}, $\sA_{3, 1}$ is the disjoint union of $\sB_{3, 1}$ and $\sC_{3, 1}$.  If $A=S/(f_1, f_2)=(S/(f_1))/(f_2)\in \sA_{3, 2}$ where $S\in \sA_{3, 0}$ and $f_1, f_2\in Z(S)_2$ is a central regular sequence, then either $S/(f_1)\in \sB_{3, 1}$ or $S/(f_1)\in \sC_{3, 1}$, so either $A\in \sB_{3, 2}$ or $A\in \sC_{3, 2}$.  On the other hand, if $A\in \sC_{3, 2}$, then there exists a central regular sequence $f_1, f_2, f_3\in Z(S)_2$ such that $A/(f_3)=S/(f_1, f_2, f_3)$ is the exterior algebra by Proposition \ref{prop.sq}, so $A\not \in \sB_{3, 2}$,
hence $\sA_{3, 2}$ is the disjoint union of $\sB_{3, 2}$ and $\sC_{3, 2}$.  Similarly, we can show that $\sA_{3, 3}$ is the disjoint union of $\sB_{3, 3}$ and $\sC_{3, 3}$.  

Since $\#(\sB_{3, 0})=1$, $\#(\sB_{3, 1})=3$ (Lemma \ref{lem.sy11}), $\#(\sB_{3, 2})=6$ (Remark \ref{rem.Sesy}), and $\#(\sC_{3, 0})=\infty$ (Example \ref{exm.3-dimgcliff}),  
\begin{align*}
& \#(\sA_{3, 1})=\#(\sB_{3, 1})+\#(\sC_{3, 1})=\#(\sB_{3, 1})+\#(\sB_{3, 2})=3+6=9, \\
& \#(\sA_{3, 2})=\#(\sB_{3, 2})+\#(\sC_{3, 2})=\#(\sB_{3, 2})+\#(\sB_{3, 1})=6+3=9, \\
& \#(\sA_{3, 3})=\#(\sB_{3, 3})+\#(\sC_{3, 3})=\#(\sB_{3, 3})+\#(\sB_{3, 0})=\infty+1=\infty 
\end{align*}
by Theorem \ref{thm.sq}.  
\end{proof}


\begin{example}
A complete representatives of algebras in $\mathscr{A}_{3, 1}$ is given below (see \cite[Corollary 3.8]{HMM}):
$$\begin{array}{lll}
k[x, y, z]/(x^2), & k[x, y, z]/(x^2+y^2), & k[x, y, z]/(x^2+y^2+z^2), \\ 
S^{(0, 0, 0)}/(x^2), & S^{(0, 0, 0)}/(x^2+y^2), & S^{(0, 0, 0)}/(x^2+y^2+z^2), \\ 
S^{(1, 0, 0)}/(y^2), & S^{(1, 0, 0)}/(x^2+y^2 + z^2), &  S^{(1, 1, 0)}/(3x^2+3y^2+4z^2), 
\end{array}$$
where $S^{(a,b,c)} = k \langle x,y,z \rangle /(yz + zy + a x^2, zx + xz + b y^2, xy + yx + c z^2)$.
\end{example} 

\begin{remark} 
$\sA_{3, 0}$ is a far bigger than the union of $\sB_{3, 0}$ and $\sC_{3, 0}$ (see \cite[Table 1]{HMM}). 
\end{remark}

\section{Characteristic Varieties} 

In this section, we define the characteristic varieties $X^{(s)}_A$ for $A\in \sC_{n, r}$.  We show that $A$ has a point variety $E_A$ and there exists a double cover $E_A\to X^{(3)}_A$.  

\subsection{Clifford quantum polynomial algebras}

In this subsection, we first show some geometric properties of Clifford quantum polynomial algebras. 

\begin{definition} \label{defn.CV}
For a sequence  $F=(F_1, \dots, F_r)\in M_n(k)^{\times r}$ of symmetric matrices so that $\cF:=F_1y_1+\cdots+F_ry_r\in M_n(k[y_1, \dots, y_r])$, and $1\leq s\leq n$, 
we define 
\begin{align*}
X^{(s)}(F) & :=
\Proj k[y_1, \dots, y_r]/(\textnormal {all the $s$-minors of $\cF$
}) \subset \PP^{r-1} \\
& =\{(\l_1, \dots, \l_r)\in \PP^{r-1}\mid \rank (\l_1F_1+\cdots +\l_rF_r)< s\}.
\end{align*} 
\end{definition}

\begin{lemma} \label{lem.XFB} 
Let $F=(F_1, \dots, F_r), F'=(F'_1, \dots, F'_r)\in M_n(k)^{\times r}$
be sequences of symmetric matrices.
\begin{enumerate}
\item{} If $F\sim _tF'$, then $X^{(s)}(F)=X^{(s)}(F')$.  
\item{} If $F\sim _sF$, then $X^{(s)}(F)\cong _pX^{(s)}(F')$ (projectively equivalent).
\item{} In particular, if $B(F)\cong B(F')$, then  $X^{(s)}(F)\cong _pX^{(s)}(F')$.
\end{enumerate}
\end{lemma}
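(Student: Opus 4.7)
The plan is to reduce both statements to the description of $X^{(s)}(F)$ as the closed subscheme of $\PP^{r-1}$ cut out by the ideal of $s \times s$ minors of the universal pencil $\cF = \sum_{j=1}^{r} F_j y_j \in M_n(k[y_1,\dots,y_r])$, and then to track how this ideal changes under the two elementary moves $\sim_t$ and $\sim_s$. Once this dictionary is set up, everything becomes linear algebra.

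For (1), I would first observe that if $F'_j = P^t F_j P$ for a single $P \in \GL_n(k)$ independent of $j$, then linearity in the $y_j$ gives $\cF' = P^t \cF P$. I would then invoke the classical fact that the ideal of $s \times s$ minors of a matrix over a commutative ring is invariant under left and right multiplication by invertible matrices. This forces the defining ideals of $X^{(s)}(F)$ and $X^{(s)}(F')$ to coincide, so the two subschemes of $\PP^{r-1}$ are literally equal, not merely projectively equivalent.

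For (2), I would use the identity
$$
\cF' = \sum_j \Bigl(\sum_i p_{ij} F_i\Bigr) y_j = \sum_i F_i \Bigl(\sum_j p_{ij} y_j\Bigr),
$$
which exhibits $\cF'$ as obtained from $\cF$ by the $k$-linear substitution $y_i \mapsto \sum_j p_{ij} y_j$. Since $P = (p_{ij}) \in \GL_r(k)$, this substitution is invertible and induces a linear automorphism of $\PP^{r-1}$, hence an element of $\PGL_r(k)$. By construction it carries $X^{(s)}(F')$ bijectively onto $X^{(s)}(F)$, which is precisely a projective equivalence.

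Finally, for (3) I would chain (1) and (2) via Lemma \ref{lem.bP}(3): if $B(F) \cong B(F')$, then $F \sim_{st} F'$, so by definition there exists $F''$ with $F \sim_s F''$ and $F'' \sim_t F'$; applying (2) and then (1) yields $X^{(s)}(F) \cong_p X^{(s)}(F'') = X^{(s)}(F')$. I do not foresee any real obstacle; the only technical input beyond bookkeeping is the standard invariance of determinantal ideals under congruence by invertible matrices, which I would simply cite rather than reprove.
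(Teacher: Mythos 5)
Your argument is correct and is exactly the routine verification the paper leaves implicit (the lemma is stated without proof): congruence $\cF'=P^t\cF P$ preserves the determinantal ideal of $s$-minors, giving literal equality in (1), while the invertible substitution $y_i\mapsto\sum_j p_{ij}y_j$ induced by $P\in\GL_r(k)$ gives the projective equivalence in (2), and (3) follows by chaining these through Lemma \ref{lem.bP}(3). No gaps.
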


\begin{lemma} \label{lem.liX1} \label{lem.x3F}
Let $F=(F_1, \dots, F_r)\in M_n(k)^{\times r}$
be a sequence of symmetric matrices, and $f_m:=\sum_{1\leq i, j\leq n}(F_m)_{ij}u_iu_j\in k[u_1, \dots, u_n]_2$ for $1\leq m\leq r$.  
\begin{enumerate}
\item{} $F$ is linearly independent if and only if $X^{(1)}(F)=\emptyset$.
\item{} $(\l_1, \dots, \l_r)\in X^{(2)}(F)$ if and only if there exists $0\neq g\in k[u_1, \dots, u_n]_1$ such that $g^2=\sum _{m=1}^r\l_mf_m$. 
\item{} $(\l_1, \dots, \l_r)\in X^{(3)}(F)$ if and only if there exist $0\neq g_1, g_2\in k[u_1, \dots, u_n]_1$ such that $g_1g_2=\sum _{m=1}^r\l_mf_m$. 
\end{enumerate}
\end{lemma}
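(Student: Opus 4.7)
The plan is to translate each rank condition on the pencil $\cF(\lambda) := \sum_{m=1}^r \lambda_m F_m$ into a factorization property of the associated quadratic form $f(\lambda) := \sum_{m=1}^r \lambda_m f_m$. The key input is the standard bijection between symmetric $n\times n$ matrices and degree-$2$ forms in $u_1,\dots,u_n$, given by $F \leftrightarrow u^t F u$, which preserves rank (valid since $\fchar k = 0$), together with Sylvester's theorem (Lemma~\ref{lem.sy11}) over the algebraically closed field $k$. Part (1) is then immediate from Definition~\ref{defn.CV}: $(\lambda)\in X^{(1)}(F)$ says $\rank \cF(\lambda)<1$, i.e., $\cF(\lambda)=O$, so $X^{(1)}(F)=\emptyset$ is exactly the statement that no nontrivial linear combination of the $F_m$'s vanishes.

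For (2), $(\lambda)\in X^{(2)}(F)$ says $\rank \cF(\lambda)\leq 1$. If $g=\sum a_i u_i \neq 0$ and $g^2=f(\lambda)$, then the symmetric matrix of $g^2$ is $aa^t$, of rank one, forcing $\cF(\lambda)=aa^t$ and hence $(\lambda)\in X^{(2)}(F)$. Conversely, if $\rank \cF(\lambda)=1$, Sylvester furnishes $P\in \GL_n(k)$ with $P^t\cF(\lambda)P=\diag(1,0,\dots,0)$; taking $g$ to be the first component of $P^{-1}u$, one finds $f(\lambda)=g^2$ with $g\neq 0$.

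For (3), $(\lambda)\in X^{(3)}(F)$ says $\rank \cF(\lambda)\leq 2$. If $f(\lambda)=g_1g_2$ with $g_i=\sum b_{ij}u_j\neq 0$, the associated symmetric matrix is $\tfrac12(b_1b_2^t+b_2b_1^t)$, manifestly of rank at most $2$. Conversely, Sylvester reduces $\cF(\lambda)$ to $\diag(1,1,0,\dots,0)$ or $\diag(1,0,\dots,0)$ via a congruence $P^t\cF(\lambda)P$; after the inverse change of variables, $f(\lambda)$ becomes $v_1^2+v_2^2$ or $v_1^2$ for linear forms $v_i$. The identity $v_1^2+v_2^2=(v_1+\sqrt{-1}\,v_2)(v_1-\sqrt{-1}\,v_2)$, available since $k$ is algebraically closed of characteristic $\neq 2$, supplies the required factorization $g_1g_2$ in the rank-$2$ case, while $v_1\cdot v_1$ handles the rank-$1$ case.

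The only subtle point is the degenerate case $\cF(\lambda)=O$ in (2) and (3), where $f(\lambda)=0$ admits no factorization with nonzero factors. This is innocuous whenever $F$ is linearly independent, for then by (1) we have $f(\lambda)\neq 0$ for every $(\lambda)\neq 0$, which is the situation of interest in the applications that follow. Apart from this bookkeeping, the argument is a routine unwinding of Sylvester's diagonalization.
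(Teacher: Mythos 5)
Your proof is correct and takes the same (essentially unique) route as the paper: part (1) is the same unwinding of the definition, and for parts (2) and (3), which the paper dismisses as clear, you supply the standard argument identifying $\rank\bigl(\sum_m\l_mF_m\bigr)$ with the rank of the quadratic form $\sum_m\l_mf_m$ and invoking Sylvester's diagonalization over the algebraically closed field $k$. Your remark about the degenerate case $\sum_m\l_mF_m=O$ is a fair observation about the statement as written (it is harmless since the lemma is only ever applied with $F$ linearly independent), not a defect of your argument.
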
 

\begin{proof} (1)  $F$ is linearly independent if and only if $\sum_{m=1}^r\l_mF_m=0$ implies $(\l_1, \dots, \l_r)=(0, \dots, 0)$ if and only if  $\rank (\sum_{m=1}^r\l_mF_m)=0$ implies  $(\l_1, \dots, \l_r)=(0, \dots, 0)$
if and only if $X^{(1)}(F)=\emptyset$. (2) and (3) are clear.
\end{proof}

For $p=(p_1, \dots, p_n)\in \AA^n$, we write $g_p:=\sum _{i=1}^np_iu_i\in k[u_1, \dots, u_n]_1$.  
For $p=(p_1, \dots, p_n), q=(q_1, \dots, q_n)\in \AA^n$, we write 
$p*q=(p_1q_1, \dots, p_nq_n)\in \AA^n$.  We often view $p\in \PP^{n-1}$ by abuse of notation.  
 Note that if $\G\subset \PP^{n-1}\times \PP^{n-1}$ is a subvariety such that, for every $(p, q)\in \G$,  $(p_1q_1, \dots, p_nq_n)\neq (0, \dots, 0)$, then the map 
 $$
 \G\to \PP^{n-1}; \; (p, q)\mapsto p*q
 $$ 
 is a well-defined morphism of varieties (by abuse of notation).

\begin{theorem} \label{thm.rXF}
If $S=T(V)/(R)=S^F\in \sC_{n, 0}$ for some normalized $F$,   
then $\Psi:\cV(R)\to X^{(3)}(F): (p, q)\mapsto p*q$ is (at most) a double cover.  
\end{theorem}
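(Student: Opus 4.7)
The plan is to translate $(p,q)\in\cV(R)$ into an explicit matrix identity and then read off both well-definedness and the fiber bound. Since $F$ is normalized, $R\subset V\otimes V$ is spanned by
\[
r_{ij}=x_i\otimes x_j+x_j\otimes x_i-\sum_{m=1}^n(F_m)_{ij}\,x_m\otimes x_m,\qquad 1\leq i<j\leq n
\]
(the $i=j$ relations vanish identically after normalization). Evaluating $r_{ij}$ at $(p,q)\in\PP(V^*)\times\PP(V^*)$, the condition $(p,q)\in\cV(R)$ becomes the single matrix identity
\[
pq^t+qp^t \;=\; \sum_{m=1}^n (p_mq_m)\,F_m \qquad \text{in } M_n(k)
\]
for appropriate affine representatives. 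In particular each $r_{ij}$ is invariant under the tensor swap, so the involution $(p,q)\mapsto(q,p)$ preserves $\cV(R)$ and is the natural candidate for a deck transformation of $\Psi$.

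For well-definedness, set $\l=p*q$ so that the identity reads $pq^t+qp^t=\sum_m\l_m F_m$; the left-hand side has rank at most $2$, so either $\l=0$ or $\l\in X^{(3)}(F)$. The first possibility is ruled out because $pq^t+qp^t=0$ would force the rank-one matrix $pq^t$ to equal $-qp^t$ and hence to be anti-symmetric, which is impossible for a nonzero rank-one matrix since anti-symmetric matrices have even rank. Hence $\Psi:\cV(R)\to X^{(3)}(F)$ is a well-defined morphism of projective varieties.

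The main step, and the hardest part of the argument, is bounding the fibers. If $(p,q),(p',q')\in\cV(R)$ both satisfy $p*q=p'*q'=\l$, then
\[
pq^t+qp^t \;=\;\sum_m\l_m F_m\;=\; p'q'^t+q'p'^t,
\]
and passing to the associated quadratic forms (compare Lemma \ref{lem.x3F}(3)) gives $g_p\,g_q\propto g_{p'}\,g_{q'}$ in $k[u_1,\dots,u_n]$. Since this ring is a UFD, a quadratic form factors uniquely into linear forms up to order and scalar, so the unordered pair $\{[p],[q]\}\subset\PP^{n-1}$ is determined by $\l$, leaving at most the two ordered preimages $(p,q)$ and $(q,p)$. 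The crucial use of the normalization hypothesis enters precisely here: the diagonal entries of $\sum_m\l_m F_m$ equal $2\l_m$, so any matrix decomposition $\sum_m\l_m F_m=pq^t+qp^t$ automatically satisfies $p*q=\l$, preventing the appearance of spurious fiber members from alternative scalings. Combined with the swap symmetry above, this shows that $\Psi$ is at most a double cover.
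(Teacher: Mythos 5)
Your argument is, at its core, the same as the paper's: the matrix identity $pq^t+qp^t=\sum_{m}p_mq_m\,F_m$ is exactly the Gram-matrix translation of the paper's computation $g_pg_q=\frac{1}{2}\sum_m p_mq_m f_m$ in $k[u_1,\dots,u_n]$, and the fiber bound in both cases comes from uniqueness of factorization of a quadratic form into linear forms in the UFD $k[u_1,\dots,u_n]$. Your way of ruling out $p*q=0$ (a nonzero rank-one matrix cannot be antisymmetric) is a pleasant substitute for the paper's observation that $g_pg_q\neq 0$ because $k[u_1,\dots,u_n]$ is a domain, and your remark that the swap $(p,q)\mapsto(q,p)$ preserves $\cV(R)$ matches the paper's use of $R\subset{\rm Sym}^2(V)$.

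The one genuine omission is surjectivity. The paper reads ``(at most) a double cover'' as including the assertion that $\Psi$ maps $\cV(R)$ \emph{onto} $X^{(3)}(F)$, and its proof devotes a separate paragraph to this: given $\l\in X^{(3)}(F)$, Lemma \ref{lem.x3F} (3) together with $X^{(1)}(F)=\emptyset$ (linear independence of $F$) produces nonzero linear forms with $\sum_m\l_mf_m=g_pg_q$; since $g_pg_q$ then lies in $R^{\perp}=\sum_m kf_m$, the pair $(p,q)$ lies in $\cV(R)$ and $\Psi(p,q)=\l$. This is not decorative: surjectivity is what later makes the induced maps of Lemma \ref{lem.XsA} (3) isomorphisms onto $X^{(3)}_A$ and $X^{(2)}_A$. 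You prove only that the image lands in $X^{(3)}(F)$ and that fibers have at most two points, so you should add the surjectivity step. (A smaller point: when two pairs have the same image in $\PP^{n-1}$, the matrices $pq^t+qp^t$ and $p'q'^t+q'p'^t$ agree only up to a nonzero scalar, but you correctly pass to $g_pg_q\propto g_{p'}g_{q'}$, so this does not affect the UFD argument.)
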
  

\begin{proof} Let $f_m:=\sum_{1\leq i, j\leq n}(F_m)_{ij}u_iu_j\in k[u_1, \dots, u_n]_2$ so that $S^!=B(F)=k[u_1, \dots, u_n]/(f_1, \dots, f_n)$ by Proposition \ref{prop.BSC} (2).  For $p=(p_1, \dots, p_n), q=(q_1, \dots, q_n)\in \PP^{n-1}$, if $(p, q)\in \cV(R)$, then
$$p_iq_j+p_jq_i=\sum _{m=1}^n(F_m)_{ij}p_mq_m$$
for every $1\leq i<j\leq n$.  In $k[u_1, \dots, u_n]$, 
\begin{align*}
0\neq g_pg_q & :=\left (\sum_{i=1}^n p_iu_i\right )\left (\sum _{j=1}^nq_ju_j\right ) =\sum_{1\leq i, j\leq n}p_iq_ju_iu_j \\ & =\sum_{m=1}^np_mq_mu_m^2+\sum_{1\leq i<j\leq n}(p_iq_j+p_jq_i)u_iu_j \\
& =\sum_{m=1}^np_mq_mu_m^2+\sum _{1\leq i<j\leq n}\left(\sum_{m=1}^n(F_m)_{ij}p_mq_m\right)u_iu_j \\
& =\sum_{m=1}^np_mq_m\left (\frac{1}{2}\sum _{1\leq i, j\leq n}(F_m)_{ij}u_iu_j\right) =\frac{1}{2}\sum_{m=1}^np_mq_mf_m,   
\end{align*} 
so 
$(p_1q_1, \dots, p_nq_n)\in X^{(3)}(F)$ by Lemma \ref{lem.x3F} (3), hence the map 
$$\Psi:\cV(R)\to X^{(3)}(F); (p, q)\mapsto p*q$$ is a morphism of varieties. 

Since $F$ is linearly independent, $X^{(1)}(F)=\emptyset$ by Lemma \ref{lem.liX1} (1), so, for every $(\l_1, \dots, \l_n)\in X^{(3)}(F)$, there exist $p, q\in \PP^{n-1}$ such that 
$f:=\sum_{m=1}^n\l_if_i=g_pg_q$
in $k[u_1, \dots, u_n]$ by Lemma \ref{lem.x3F} (3). Since 
$$S^!=T(V^*)/(R^{\perp})=k[u_1, \dots, u_n]/(f_1, \dots, f_n),$$ 
$g_pg_q\in R^{\perp}$, viewing $g_pg_q$ as an element in $V^*\otimes V^*$, so $(p, q)\in \cV(R)$, hence $\Psi:\cV(R)\to X^{(3)}(F)$ is surjective.    

Since $k[u_1, \dots, u_n]$ is a UFD, a factorization of $f$ is unique up to scalar and the order, so $\Psi:\cV(R)\to X^{(3)}(F)$ is (at most) a double cover.  
\end{proof}

\begin{theorem} \label{thm.0G1} 
Every $S\in \sC_{n, 0}$
satisfies (G1) with $\cP(S)=(E, \s)$ such that $\s^2=\id$.  Moreover, if $S=S^F$ for some normalized $F$,   
then $\Phi:E\to X^{(3)}(F): p\mapsto p*\s(p)$ is (at most) a double cover.  
\end{theorem}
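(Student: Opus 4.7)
By Corollary~\ref{cor.nor}, I may assume $S=S^F$ for a normalized $F$, so $S=T(V)/(R)$. The proof of Theorem~\ref{thm.rXF} records the key reformulation
\[
([p],[q])\in \cV(R) \iff g_pg_q\in I_2:=\sum_{m=1}^n kf_m,
\]
equivalently $g_pg_q=0$ in $B(F)=k[u_1,\dots,u_n]/I_2$. This criterion being symmetric in $p,q$, the coordinate swap $\tau(p,q):=(q,p)$ is a well-defined involution on $\cV(R)$ satisfying $\Psi\circ\tau=\Psi$, where $\Psi$ is the morphism of Theorem~\ref{thm.rXF}.

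Set $E:=\pi_1(\cV(R))$; by $\tau$-symmetry, $E=\pi_2(\cV(R))$, and $E$ is a closed projective subvariety of $\PP(V^*)$. The strategy is to show that for each $p\in E$ the fibre $W_p:=\{[q]\in\PP(V^*):([p],[q])\in\cV(R)\}$ consists of a single point, which I then call $\sigma(p)$. Granting this, $\tau$-invariance gives $(\sigma(p),p)\in\cV(R)$, so $\sigma^2=\id$ on $E$, and $\cV(R)$ becomes the graph of $\sigma$ by construction. This verifies (G1) with $\cP(S)=(E,\sigma)$. Once available, under the isomorphism $\pi_1\colon\cV(R)\to E$, the morphism $\Phi\colon p\mapsto p*\sigma(p)$ coincides with $\Psi$, which is at most a double cover by Theorem~\ref{thm.rXF}.

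The main obstacle is the uniqueness $|W_p|=1$ for every $p\in E$. Since the relations are linear in $q$ for fixed $p$, $W_p$ is a linear subspace of $\PP(V^*)$; via the injection $q\mapsto g_pg_q$ it identifies with $\PP(\Ann_{B(F)_1}(g_p))$. I therefore need $\dim\Ann_{B(F)_1}(g_p)\le 1$ for $p\in E$. My plan is to use the matrix form $\cF(p*q)=pq^{t}+qp^{t}$ of the relations (a direct rewrite, cf.\ the proof of Theorem~\ref{thm.rXF}): two linearly independent $g_{q_1},g_{q_2}\in\Ann_{B(F)_1}(g_p)$ would yield a $\PP^1$-family of rank-$\le 2$ symmetric matrices $\cF(p*(\alpha q_1+\beta q_2))$ all sharing $p$ in their column span. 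Combining the essentially unique rank-$2$ decomposition of a symmetric matrix---$M=ab^{t}+ba^{t}$ determines $\{[a],[b]\}$ up to swap, by a short linear-algebra computation---with the linear independence of $F_1,\dots,F_n$ built into the definition of a Clifford quantum polynomial algebra, this $\PP^1$-family would force a non-trivial $k$-linear dependence among the $F_m$, a contradiction. Making this case analysis precise, and handling the degenerate situations (rank-$\le 1$ matrices, $p$ with vanishing coordinates), is the technical heart of the proof.
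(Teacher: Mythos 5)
Your overall architecture is the same as the paper's: reduce to $S=S^F$ with $F$ normalized, use the symmetry of $\cV(R)$ coming from $R\subset{\rm Sym}^2(V)$, show that the fibre of $\pi_1$ over each $p\in E$ is a single point, and then identify $\Phi$ with the map $\Psi$ of Theorem \ref{thm.rXF}. The problem is the step you yourself call the technical heart: your proposed mechanism for proving $\dim\Ann_{B(F)_1}(g_p)\le 1$ cannot work, because you try to derive the contradiction from the linear independence of $F_1,\dots,F_n$ alone, and that hypothesis is genuinely too weak. Take $n=3$,
$$F_1=\begin{pmatrix}2&1&1\\1&0&0\\1&0&0\end{pmatrix},\quad F_2=\begin{pmatrix}0&1&0\\1&2&1\\0&1&0\end{pmatrix},\quad F_3=\begin{pmatrix}0&0&0\\0&0&0\\0&0&2\end{pmatrix}.$$
This $F$ is normalized (hence automatically linearly independent, since the diagonal parts already are), yet $f_1=2u_1(u_1+u_2+u_3)$ and $f_2=2u_2(u_1+u_2+u_3)$ share the factor $g_p=u_1+u_2+u_3$, so for $p=(1,1,1)$ both $(p,(1,0,0))$ and $(p,(0,1,0))$ lie in $\cV(R)$ and $\Ann_{B(F)_1}(g_p)$ is two-dimensional. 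No amount of rank bookkeeping on the pencil $\cF(p*(\alpha q_1+\beta q_2))=pq^t+qp^t$ will produce a linear dependence among such $F_m$; the $\PP^1$-family of rank-$\le 2$ symmetric matrices you construct is perfectly compatible with linear independence.

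What rules this out for $S\in\sC_{n,0}$ is not linear independence but the full quantum-polynomial-algebra hypothesis, which forces $f_1,\dots,f_n$ to be a regular sequence in $k[u_1,\dots,u_n]$ (equivalently $H_{B(F)}(t)=(1+t)^n$); in the example above $\cV(f_1,f_2,f_3)\supset\cV(g_p)\cap\cV(f_3)\neq\emptyset$, so $C(F)\notin\sC_{3,0}$. This is exactly how the paper closes the gap: if $(p,q),(p,q')\in\cV(R)$ with $q\neq q'$, then $g_pg_q$ and $g_pg_{q'}$ are linearly independent elements of $\sum_m kf_m$ (by unique factorization in the UFD $k[u_1,\dots,u_n]$) sharing the common factor $g_p$; extending them to a basis $f_1',\dots,f_n'$ of $\sum_m kf_m$ gives a non-regular sequence with the same quotient as $f_1,\dots,f_n$, contradicting Lemma \ref{lem.rHn}. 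If you replace your linear-independence contradiction by this regular-sequence (Hilbert series) argument, your proof becomes the paper's; as written, the key step has a genuine gap and is in any case left unexecuted.
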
 

\begin{proof}
We may assume that $S=S^F$ for some normalized $F$.  Define maps 
\begin{align*}
& \pi_1:
\cV(R)\to \PP^{n-1}; \; (p, q)\mapsto p, \\ 
& \pi_2:
\cV(R)\to \PP^{n-1}; \; (p, q)\mapsto q. 
\end{align*}
Since $R\subset {\rm Sym}^2(V)$ by Proposition \ref{prop.symd}, $(p, q)\in \cV(R)$ if and only if $(q, p)\in \cV(R)$, so  $\pi_1(\cV(R))=\pi_2(\cV(R))=:E$.  
Suppose that $\pi_1:\cV(R)\to \PP^{n-1}$ is not injective.  Then there exist $(p, q), (p, q')\in \cV(R)$ such that $q\neq q'$.   By the uniqueness of the factorization in $k[u_1, \dots, u_n]$, $f_1':=g_pg_q, f_2':=g_pg_{q'}\in k[u_1, \dots, u_n]_2$ are linearly independent.  Since $f_1', f_2'\in \sum_{m=1}^nkf_m$ by the proof of  Theorem \ref{thm.rXF}, we may choose $f_3', \dots, f_n'\in k[u_1, \dots, u_n]_2$ such that $\sum_{m=1}^nkf_m'=\sum_{m=1}^nkf_m$.  Since $f_1', \dots, f_n'\in k[u_1, \dots, u_n]$ is not a regular sequence, 
$$H_{k[u_1, \dots, u_n]/(f_1, \dots, f_n)}(t)=H_{k[u_1, \dots, u_n]/(f_1', \dots, f_n')}(t)\neq (1-t^2)^n/(1-t)^n,$$
which is a contradiction by Lemma \ref{lem.rHn}, so $\pi_1$ is injective.  
By symmetry, $\pi_2$ is also injective, so 
$S$ satisfies (G1) with $\cP(S)=(E, \s)$ where $\s:=\pi_2\pi_1^{-1}\in \Aut E$.  Since $(p, \s(p))\in \cV(R)$ implies $(\s(p), p)\in \cV(R)$,  we have $\s^2=\id$.   

Since the map $E\to \cV(R); \; p\mapsto (p, \s(p))$ is an isomorphism, the map $\Phi:E\to X^{(3)}(F): p\mapsto p*\s(p)$ is (at most) a double cover by Theorem \ref{thm.rXF}.  
\end{proof}


\begin{example} \label{exm.skewpa}
As a simplest example, if $F=(F_1, \dots, F_n)$ where 
 $$(F_m)_{ij} =  \begin{cases} 2 & i=j=m, \\ 0 & \text{otherwise}, \end{cases} \quad 1 \leq m \leq n, $$
then
$$S := C(F) = k\<x_1, \dots, x_n\>/(x_ix_j+x_jx_i)_{1\leq i<j\leq n}\in \sC_{n, 0}.$$  
It is well-known that $\cP(S)=(E, \s)$ where 
\begin{align*}
& E =\bigcap _{1\leq i<j< k\leq n}\cV(x_ix_jx_k)=\bigcup _{1\leq i<j\leq n}\ell_{ij}\subset \PP^{n-1}, \\
& \ell_{ij}=\{(0, \dots, 0, a_i, 0, \dots, 0, a_j, 0, \dots, 0)\in \PP^{n-1}\mid (a_i, a_j)\in \PP^1\},
\end{align*}
and $\s\in \Aut E$ is given by
$$\s|_{\ell_{ij}}(0, \dots, 0, a_i, 0, \dots, 0, a_j, 0, \dots, 0)=(0, \dots, 0, a_i, 0, \dots, 0, -a_j, 0, \dots, 0).$$
On the other hand, 
$$ F_1y_1+\cdots+F_ny_n=\begin{pmatrix} 2y_1 & 0 & \cdots & 0 \\ 0 & 2y_2 & \cdots & 0 \\ \vdots & \vdots & \ddots & \vdots \\ 0 & 0 & \cdots  & 2y_n \end{pmatrix},$$ so 
$$X^{(s)}(F) =\bigcap_{1\leq i_1<\dots <i_s\leq n}\cV(y_{i_1}\cdots y_{i_s})
\subset \PP^{n-1}.$$
In particular,   
\begin{align*}
 X^{(n)}(F) &=\cV(y_1\cdots y_n)\subset \PP^{n-1}, \\
 X^{(3)}(F) &=\bigcap_{1\leq i<j<k\leq n}\cV(y_iy_jy_k)=\bigcup _{1\leq i<j\leq n}\ell_{ij}
\subset \PP^{n-1},  \\
X^{(2)}(F)& =\bigcap_{1\leq i<j\leq n}\cV(y_iy_j)=\bigcup_{1 \leq i \leq n}\{(0, \dots, 0, a_i, 0, \dots, 0)\in \PP^{n-1}\mid a_i=1\} \\
& ={\rm Sing}(X^{(3)}(F))\; \textnormal {(the set of singular points of $X^{(3)}(F)$}), 
\end{align*}   
and $\Phi:E\to X^{(3)}(F)$ is given by 
$$\Phi|_{\ell_{ij}}(0, \dots, 0, a_i, 0, \dots, 0, a_j, 0, \dots, 0)=(0, \dots, 0, a_i^2, 0, \dots, 0, -a_j^2, 0, \dots, 0).$$
\end{example}

\begin{example} \label{exm.phi3dim}
We compute the map $\Phi:E\to X^{(3)}(F)$ for every $3$-dimensional Clifford quantum polynomial algebras $S=C(F) \in \sC_{3,0}$ (see Example \ref{exm.3-dimgcliff}) in Table \ref{3cliqpaphi} of Section 5.
\end{example}

\subsection{Clifford quadratic complete intersections} 

In this subsection, we extend some geometric properties proved in the previous subsection to Clifford quadratic complete intersections.

\begin{definition} 
Let $S=S^F\in \sC_{n, 0}$ for some normalized $F$.  For 
$f\in Z(S)_2$, 
we define 
$$\widetilde K_f:=\{(\l_1, \dots, \l_n)\in \AA^n\mid (\l_1x_1+\cdots +\l_nx_n)^2=f \in S\},  
$$
and $K_f:=\widetilde K_f/\sim$ where $g\sim g'$ if and only if $g'=\pm g$ for $g, g'\in S_1$.  
\end{definition} 

The Clifford deformation of a quadratic algebra was introduced by He and Ye \cite{HY}.  

\begin{definition} 
Let $S=T(V)/(R)$ be a quadratic algebra where $R\subset V\otimes V$. A linear map $\theta : R \to k$ is called a {\it Clifford map} if 
$$(\theta \otimes 1- 1 \otimes \theta)(V \otimes R \cap R \otimes V) = 0.$$ 
We define the {\it Clifford deformation} of $S$ associated to $\theta$ by 
$$
S(\theta) : = T(V) / (f - \theta(f)\mid f \in R).
$$
\end{definition}

Let $S=T(V)/(R)$ be a quadratic algebra where $R\subset V\otimes V$.  Note that the $\ZZ_2$-graded structure on $T(V)=(\oplus _{i=0}^{\infty}V^{\otimes 2i})\oplus (\oplus _{i=0}^{\infty}V^{\otimes 2i+1})$ induces a $\ZZ_2$-graded structure on $S(\theta)$.  
The set of Clifford maps $\theta$ of $S^!=T(V^*)/(R^{\perp})$ is in one-to-one correspondence with the set of central elements $f \in Z(S)_2$ by \cite[Lemma 2.8]{HY}.  We denote by $\theta_f$ the Clifford map of $S^!$ corresponding to $f\in Z(S)_2$.  

We say that $A=S/(f)$ is a noncommutative quadric hypersurface if $S$ is a $d$-dimensional quantum polynomial algebra and $f\in Z(S)_2$ is a regular element.  In particular, we call $A$ a noncommutative conic (resp. quadric) if $d=3$ (resp. $d=4$).  If 
$A=S/(f)$ is a noncommutative quadric hypersurface, then there exists a unique central element $f^!\in Z(A^!)_2$ such that $S^!=A^!/(f^!)$ where $A^!$ is the quadratic dual of $A$ (see Lemma \ref{lem.ffs}).  The finite dimensional algebra $C(A):=A^![(f^!)^{-1}]_0$ plays an important role to study $A$ by the following theorem:    

\begin{theorem} [{\cite[Proposition 5.2 (1)]{SmV}}]
\label{thm.SV}
If $A$ is a noncommutative quadric hypersurface, then 
the stable category $\uCM^{\ZZ}(A)$ of the category of maximal Cohen-Macaulay graded right $A$-modules and the bounded derived category $\sD^b(\operatorname{mod} C(A))$ of the category of finitely generated right $C(A)$-modules are equivalent. 
\end{theorem}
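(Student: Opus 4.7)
The plan is to combine Buchweitz's graded singularity-category theorem with Koszul duality, using the central regular element $f^! \in Z(A^!)_2$ produced by Lemma~\ref{lem.ffs} to bridge the two sides. Since $S$ is a $d$-dimensional quantum polynomial algebra and $f \in Z(S)_2$ is central regular, $A = S/(f)$ is AS-Gorenstein of injective dimension $d-1$, so Buchweitz's theorem in the graded setting gives a triangulated equivalence
\begin{equation*}
\uCM^{\ZZ}(A) \;\simeq\; \sD_{sg}^{\ZZ}(A) \;:=\; \sD^b(\grmod A)/\operatorname{Perf}^{\ZZ}(A),
\end{equation*}
where $\operatorname{Perf}^{\ZZ}(A)$ is the thick subcategory of perfect graded complexes. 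This reduces the theorem to identifying $\sD_{sg}^{\ZZ}(A)$ with $\sD^b(\operatorname{mod} C(A))$.

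The next step is to set up the Koszul-dual picture. By Lemma~\ref{lem.qpaKos} and Lemma~\ref{lem.Kos}(ii), $A$ is Koszul. The dual algebra $A^!$ carries a central regular element $f^! \in Z(A^!)_2$ with $A^!/(f^!) \cong S^!$, a finite-dimensional graded Frobenius algebra by Lemma~\ref{lem.Sm}. Combined with the Hilbert-series identity $H_{A^!}(t) = 1/H_A(-t) = (1+t)^d/(1-t^2)$ from Lemma~\ref{lem.Kos}(i), this shows that $A^!$ is free of rank $2^d$ over the central polynomial subring $k[f^!]$, so
\begin{equation*}
C(A) \;=\; A^![(f^!)^{-1}]_0
\end{equation*}
is a finite-dimensional algebra of dimension $2^{d-1}$. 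Koszul duality in the Beilinson--Ginzburg--Soergel style then provides an equivalence between suitable triangulated categories of graded $A$- and $A^!$-modules, and the key technical claim is that under this equivalence $\operatorname{Perf}^{\ZZ}(A)$ corresponds to the thick subcategory of $f^!$-torsion complexes on the $A^!$-side. Passing to Verdier quotients on both sides produces
\begin{equation*}
\sD_{sg}^{\ZZ}(A) \;\simeq\; \sD^b(\grmod A^![(f^!)^{-1}]),
\end{equation*}
and since $f^!$ becomes a homogeneous unit of positive degree after localisation, the degree-zero functor $M \mapsto M_0$ gives an equivalence $\grmod A^![(f^!)^{-1}] \simeq \operatorname{mod} C(A)$, closing the chain.

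The main obstacle is the Koszul-duality step itself: one must verify that the BGS-type equivalence is exact on the correct (bounded versus unbounded) subcategories, and that it matches perfect complexes on $A$ with $f^!$-torsion complexes on $A^!$. Morally this is the noncommutative matrix-factorisation principle: every maximal Cohen--Macaulay graded $A$-module admits a $2$-periodic graded free resolution over $S$ (a graded Eisenbud-type statement, valid because $f$ is central of degree~$2$ in an AS-regular algebra), and it is precisely this $2$-periodicity that is captured by inverting $f^!$ on the Koszul dual side. Tracking grading shifts and boundedness through the duality functor, and confirming that the resulting equivalence has finite-dimensional homogeneous components matching those of $C(A)$, is the delicate bookkeeping on which the theorem ultimately rests.
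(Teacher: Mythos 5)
The paper does not actually prove this statement; it is quoted directly from \cite[Proposition 5.2(1)]{SmV}, and your outline reproduces the strategy of that cited proof: Buchweitz's equivalence $\uCM^{\ZZ}(A)\simeq \sD^b(\grmod A)/\operatorname{Perf}^{\ZZ}(A)$, a Koszul-duality step sending perfect complexes to $f^!$-torsion complexes over $A^!$, localization at $f^!$, and passage to degree zero. So in approach you are aligned with the source, and the skeleton is sound.

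Two caveats. First, the entire weight of the argument sits in the sentence you yourself flag as ``the key technical claim'': that the BGS-type Koszul duality restricts correctly to the bounded categories in play and identifies $\operatorname{Perf}^{\ZZ}(A)$ with the thick subcategory of complexes with finite-dimensional (equivalently, $f^!$-torsion) cohomology over $A^!$. As written this step is asserted, not proved, so the proposal is an outline of \cite{SmV} rather than a self-contained argument --- which is acceptable here only because the paper itself defers to that reference. Second, your justification of the final step is wrong as stated: the existence of a homogeneous unit of positive degree does \emph{not} make $M\mapsto M_0$ an equivalence. For example, $k[w^{\pm 1}]$ with $\deg w=2$ has graded module category equivalent to $\operatorname{mod} k\times\operatorname{mod} k$, not $\operatorname{mod} k$. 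What is actually needed is that $R:=A^![(f^!)^{-1}]$ is \emph{strongly graded}, i.e.\ $R_1R_{-1}=R_0$; this holds because $A^!$ is quadratic, hence generated in degree $1$, so $f^!\in A^!_1\cdot A^!_1\subset R_1R_1$ and therefore $1=f^!(f^!)^{-1}\in R_1R_{-1}$. With that repair, the equivalence $\grmod R\simeq \operatorname{mod}C(A)$ follows from Dade's theorem, and the chain of equivalences closes as you intend.
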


\begin{theorem} \label{thm.ctf} 
Let $S=S^F\in \sC_{n, 0}$ for some normalized $F$, $f\in Z(S)_2$ and $A=S/(f)\in \sC_{n, 1}$.   
\begin{enumerate}
\item{} $\widetilde K_f\cong \Spec S^!(\theta_f)$.  
\item{} $K_f\cong \Spec C(A)\cong \Proj {A^!}$.  
\end{enumerate}
\end{theorem}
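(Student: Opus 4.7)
The plan is to prove (1) by a direct computation matching both sides with $\Spec k[u_1,\dots,u_n]/(f_m - 2a_m)$, then deduce (2) by identifying the sign involution $\lambda \mapsto -\lambda$ on $\widetilde K_f$ with the $\ZZ/2$-grading automorphism of $S^!(\theta_f)$ and matching the resulting even subring with $C(A) = A^![(f^!)^{-1}]_0$.

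For (1), use the normalized presentation $S = S^F$. By Corollary \ref{cor.center}, $f = \sum_m a_m x_m^2$ for unique $a_m \in k$; expanding $(\sum_i \lambda_i x_i)^2$ via the defining relations of $S^F$ together with the normalization $(F_m)_{\ell\ell} = 2\delta_{m\ell}$ yields $(\sum_i \lambda_i x_i)^2 = \tfrac{1}{2}\sum_m f_m(\lambda)\,x_m^2$, where $f_m(u) = \sum_{i,j}(F_m)_{ij}u_iu_j$ is the $m$-th defining quadric of $S^! = B(F)$ (Proposition \ref{prop.BSC}(2)). Since $x_1^2,\dots,x_n^2$ are linearly independent in $S$, this gives $\widetilde K_f \cong \Spec k[u_1,\dots,u_n]/(f_m - 2a_m)_m$. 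For the other side, take the symmetric lift $w = \sum_m a_m x_m \otimes x_m \in V\otimes V$ of $f$; the He--Ye correspondence gives $\theta_f(g) = \langle w, g\rangle$ on $g \in R^\perp$, and direct pairing yields $\theta_f(f_m) = 2a_m$ and $\theta_f(u_iu_j - u_ju_i) = 0$. Hence $S^!(\theta_f) = k[u_1,\dots,u_n]/(f_m - 2a_m)_m \cong \widetilde K_f$, proving (1).

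For (2), the equivalence $g \sim -g$ on $S_1$ corresponds under the iso of (1) to $u_i \mapsto -u_i$ on $S^!(\theta_f)$, i.e., the $\ZZ/2$-grading automorphism, so $K_f \cong \Spec (S^!(\theta_f))_0$. Separately, by Theorem \ref{thm.sq}, $A^! \in \sB_{n, n-1}$ is commutative, and by Lemma \ref{lem.ffs} together with Theorem \ref{thm.n0nn}, $A^!/(f^!) \cong S^! \in \sB_{n,n}$ is finite dimensional; hence $V(f^!) = \emptyset$ in $\Proj A^!$ and $\Proj A^! \cong \Spec A^![(f^!)^{-1}]_0 = \Spec C(A)$. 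It remains to construct an iso $(S^!(\theta_f))_0 \cong A^![(f^!)^{-1}]_0$: after suitably rescaling $f^!$, inspection of the relations of $A^!$ shows the images $\bar f_m$ satisfy $\bar f_m = 2a_m f^!$, yielding a $k$-algebra iso $A^!/(f^! - 1) \cong S^!(\theta_f)$. The map $\Phi: (A^!)_{\mathrm{even}} \to A^![(f^!)^{-1}]_0$, $a \mapsto a/(f^!)^{\deg a/2}$ on homogeneous elements of even degree, kills $f^! - 1$ and hence factors through $(S^!(\theta_f))_0$; surjectivity is immediate, and injectivity follows from a $\ZZ$-homogeneous-components argument using that $f^!$ is a non-zero-divisor (Lemma \ref{lem.ffs}).

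The main obstacle is the last identification $(S^!(\theta_f))_0 \cong C(A)$, which requires carefully tracking three gradings simultaneously---the $\ZZ$-grading on $A^!$, the $\ZZ/2$-grading on the Clifford deformation, and the $\ZZ$-grading on the localization $A^![(f^!)^{-1}]$---and matching the normalization of $f^!$ provided by Lemma \ref{lem.ffs} with the factor of $2$ appearing in $\theta_f(f_m) = 2a_m$.
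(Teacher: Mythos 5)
Your proposal is correct and follows essentially the same route as the paper: part (1) is the identical computation (expanding $(\sum_i\lambda_ix_i)^2$ against the normalized relations and evaluating $\theta_f$ on the defining quadrics to get $2a_m$), and for part (2) the paper simply defers to the analogous argument in \cite[Proposition 4.3]{HMM}, which is exactly the chain $K_f\cong \Spec (S^!(\theta_f))_0\cong\Spec C(A)\cong\Proj A^!$ that you spell out. Your expanded details for (2) --- the sign involution as the $\ZZ_2$-grading automorphism, the emptiness of $\cV(f^!)$ in $\Proj A^!$, the dehomogenization isomorphism $(A^!)_{\mathrm{even}}/(f^!-1)\to A^![(f^!)^{-1}]_0$, and the proportionality $\bar f_m=2a_mf^!$ after rescaling --- all check out.
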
 

\begin{proof} By Corollary \ref{cor.center}, we may write $f=\sum_{m=1}^na_mx_m^2\in Z(S)_2$.  

(1)  If $g=\sum_{i=1}^n\l_ix_i\in S_1$, then 
\begin{align*}
g^2 & =\sum_{1\leq i, j\leq n}\l_i\l_jx_ix_j=\sum_{m=1}^n\l_m^2x_m^2+\sum_{1\leq i<j\leq n}\l_i\l_j(x_ix_j+x_jx_i) \\
& =\sum_{m=1}^n\l_m^2x_m^2+\sum _{1\leq i<j\leq n}\l_i\l_j\sum_{m=1}^n(F_m)_{ij}x_m^2 \\
&=\sum_{m=1}^n\left (\frac{1}{2}\sum _{1\leq i, j\leq n}\l_i\l_j(F_m)_{ij}\right) x_m^2, 
\end{align*}
so $g^2=f$ if and only if 
$$(\l_1, \dots, \l_n)\in \cV\left (\{\sum_{1\leq i, j \leq n} (F_m)_{ij}u_i u_j  - 2a_m\}_{1\leq m\leq n}\right )\subset \AA^n.$$

On the other hand, since 
$$S^!=B(F)=k[u_1, \dots, u_n]/ (\sum_{1\leq i, j\leq n}(F_m)_{ij}u_iu_j
)_{1\leq m\leq n}, $$
and 
\begin{align*}
\theta_f\left(\sum_{1\leq i, j\leq n}(F_m)_{ij}u_iu_j\right)&=\left(\sum_{1\leq i, j\leq n}(F_m)_{ij}u_iu_j\right)\left (\sum _{i=1}^na_ix_i^2\right)\\
&=\sum _{i=1}^na_i(F_m)_{ii} =2a_m,
\end{align*}
we have 
$$S^!(\theta_f) =k[u_1, \dots, u_n]/(\sum_{1\leq i, j\leq n}(F_m)_{ij}u_iu_j-2a_m)_{1\leq m\leq n},$$
so $\widetilde K_f\cong \Spec S^!(\theta_f)$ (as varieties).  

(2) 
The similar proof as in \cite[Proposition 4.3]{HMM}  shows that 
\begin{align*}
&K_f\cong \Spec (S^!(\theta_f)_0)\cong \Spec C(A)\cong \Proj {A^!}. 
\qedhere
\end{align*}
\end{proof}

For a right noetherian connected graded algebra $A$, following \cite{AZ}, we define the {\it noncommutative projective scheme} associated to $A$ by $\Projn A=(\tails A, \cA)$ where $\tors A$ is the full subcategory of $\grmod A$ consisting of finite dimensional modules over $k$, $\tails A:=\grmod A/\tors A$ is the quotient category, and $\cA$ is the image of $A\in \grmod A$ in $\tails A$.  
We will say that $\Projn A$ is {\it smooth} if  $\tails A$ has finite global dimension.

\begin{corollary} 
For $A\in \sC_{n, 1}$, the following are equivalent:
\begin{enumerate}
\item{} $\Projn A$ is smooth. 
\item{} $C(A)\cong k^{2^{n-1}}$.
\item{} $\#(K_f)=2^{n-1}$.  
\end{enumerate} 
\end{corollary}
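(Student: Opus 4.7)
The plan is to compute $\dim_k C(A)$ explicitly, exploit the commutativity of $A^!$ (from Theorem~\ref{thm.sq}) to reduce $(2)\Leftrightarrow (3)$ to a statement about finite-dimensional commutative algebras, and invoke the Smith--Van den Bergh equivalence for $(1)\Leftrightarrow (2)$. First, since $A\in\sC_{n,1}$, Theorem~\ref{thm.sq} gives $A^!\in\sB_{n,n-1}$, so $A^!$ is a commutative quadratic complete intersection with Hilbert series
$$H_{A^!}(t)=\frac{(1-t^2)^{n-1}}{(1-t)^n}=\frac{(1+t)^{n-1}}{1-t},$$
which is eventually constant at $2^{n-1}$. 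Since $f^!\in Z(A^!)_2$ is regular (Lemma~\ref{lem.ffs}), multiplication by $f^!$ is an injection $A^!_{2m}\hookrightarrow A^!_{2m+2}$, which becomes an isomorphism once both dimensions equal $2^{n-1}$. Consequently $C(A)=A^![(f^!)^{-1}]_0$ has $k$-dimension $2^{n-1}$, and is commutative as the degree-zero part of a localization of the commutative ring $A^!$.

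For $(2)\Leftrightarrow (3)$: Theorem~\ref{thm.ctf}(2) identifies $K_f\cong\Spec C(A)$, so $\#(K_f)$ is the number of maximal ideals of $C(A)$. Over an algebraically closed field, a commutative $k$-algebra of dimension $2^{n-1}$ has at most $2^{n-1}$ maximal ideals, with equality precisely when the algebra is reduced semisimple, i.e.\ isomorphic to $k^{2^{n-1}}$.

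For $(1)\Leftrightarrow (2)$: Theorem~\ref{thm.SV} provides a triangulated equivalence $\uCM^{\ZZ}(A)\simeq\sD^b(\operatorname{mod} C(A))$. The standard smoothness criterion for noncommutative quadric hypersurfaces (as in \cite{SmV}) says that $\Projn A$ is smooth iff $\uCM^{\ZZ}(A)$ has finite cohomological dimension, which via the equivalence amounts to $\gldim C(A)<\infty$; for a finite-dimensional algebra this forces $C(A)$ to be semisimple, and for a commutative $2^{n-1}$-dimensional $k$-algebra with $k$ algebraically closed, semisimplicity is equivalent to $C(A)\cong k^{2^{n-1}}$. The main obstacle is this last step: one must pin down the precise smoothness criterion relating $\Projn A$ to $\gldim C(A)$, since Theorem~\ref{thm.SV} alone provides the derived equivalence but not the smoothness characterization; the rest of the argument reduces to linear algebra over the eventually constant Hilbert series of $A^!$.
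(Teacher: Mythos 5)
Your treatment of $(2)\Leftrightarrow(3)$ and your computation of $\dim_k C(A)=2^{n-1}$ are sound: the Hilbert-series argument (stabilization of the coefficients of $H_{A^!}(t)=(1+t)^{n-1}/(1-t)$ at $2^{n-1}$, combined with injectivity of multiplication by the regular central element $f^!$ in the directed system computing $A^![(f^!)^{-1}]_0$) is a legitimate self-contained substitute for the paper's citation of \cite[Lemma 5.1]{SmV}, and the identification $K_f\cong\Spec C(A)$ from Theorem \ref{thm.ctf}(2) together with commutativity of $A^!$ (hence of $C(A)$) via Theorem \ref{thm.sq} is exactly how the paper handles that equivalence.

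The genuine gap is where you flagged it, in $(1)\Leftrightarrow(2)$. Theorem \ref{thm.SV} gives only the triangulated equivalence $\uCM^{\ZZ}(A)\simeq \sD^b(\operatorname{mod} C(A))$, and your proposed bridge --- ``$\Projn A$ is smooth iff $\uCM^{\ZZ}(A)$ has finite cohomological dimension iff $\gldim C(A)<\infty$'' --- is not justified as stated: it is not explained which finiteness property of the stable category corresponds to finiteness of the global dimension of $\tails A$, and the equivalence of Theorem \ref{thm.SV} does not by itself transport smoothness. The paper closes this step by citing a different result, \cite[Theorem 6.3]{HY}, which asserts directly that $\Projn A$ is smooth if and only if $C(A)$ is semisimple; no detour through the derived equivalence is needed. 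A further caution: your intermediate claim that finite global dimension forces a finite-dimensional algebra to be semisimple is false in general (any non-semisimple hereditary path algebra is a counterexample); it is correct here only because $C(A)$ is commutative, so that each local factor is artinian local of finite global dimension and hence a field. You do have commutativity available, but that hypothesis must be invoked at that step rather than afterwards.
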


\begin{proof}  (1) $\Leftrightarrow$ (2): By \cite[Theorem 6.3]{HY}, $\Projn A$ is smooth if and only if $C(A)$ is semisimple.  Since $A^!\in \sB_{n, n-1}$ is commutative by Theorem \ref{thm.sq}, $C(A)$ is commutative.  Since $\dim_k C(A)={2^{n-1}}$ by \cite[Lemma 5.1]{SmV}, $C(A)$ is semisimple if and only if $C(A)\cong k^{2^{n-1}}$.  

(2) $\Leftrightarrow$ (3).  This follows from Theorem \ref{thm.ctf} (2). 
\end{proof}

\begin{remark} We want to emphasize in general that, for a quantum polynomial algebra $S$, $f\in Z(S)_2$ and $A=S/(f)$,  there is no reason to believe that if $S^!$ is commutative, then $A^!$ is also commutative, so Theorem \ref{thm.sq} is needed to prove the above Corollary.   
\end{remark}

The following theorem is essential to study Clifford quadratic complete intersections.

\begin{theorem} \label{thm.cen} 
If $S\in \sC_{n, 0}$, then $Z(S)_2=\{g^2\in S_2\mid g_1\in S_1\}$. 
In particular,  every $A\in \sC_{n, r}$ is of the form $A=S/(g_1^2, \dots, g_r^2)$ where $S\in \sC_{n, 0}$ and $g_1, \dots, g_r\in S_1$.  
\end{theorem}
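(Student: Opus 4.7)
The plan is as follows. The easy inclusion $\{g^2 : g \in S_1\} \subseteq Z(S)_2$ is immediate: for any $g \in S_1$, the element $g^2 = \frac{1}{2}(g\cdot g + g\cdot g)$ lies in $Z(S)_2$ since $uv+vu \in Z(S)_2$ for all $u,v \in S_1$ (as noted just before Definition \ref{def.qqci}), using that $\operatorname{char} k = 0$.

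For the reverse inclusion, I will first use Corollary \ref{cor.nor} to assume $S = S^F$ for a normalized $F$, and Corollary \ref{cor.center} to write any $f \in Z(S)_2$ as $f = \sum_{m=1}^n a_m x_m^2$. The goal then becomes to produce $\lambda \in k^n$ with $(\sum_i \lambda_i x_i)^2 = f$, that is, to show that the set
$$\widetilde K_f = \{(\lambda_1,\ldots,\lambda_n) \in \AA^n \mid (\lambda_1 x_1 + \cdots + \lambda_n x_n)^2 = f \in S\}$$
is nonempty. Here I invoke Theorem \ref{thm.ctf}(1), whose proof—applicable verbatim regardless of whether $f$ is regular—gives the identification
$$\widetilde K_f \cong \Spec S^!(\theta_f), \qquad S^!(\theta_f) = k[u_1,\ldots,u_n]/(h_m - 2a_m)_{1\leq m\leq n},$$
where $h_m := \sum_{i,j}(F_m)_{ij}u_iu_j$. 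Since $S^!(\theta_f)$ is a finitely generated commutative $k$-algebra and $k$ is algebraically closed, by the Nullstellensatz it then suffices to show $S^!(\theta_f) \neq 0$.

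The nonvanishing of $S^!(\theta_f)$ is the main step, and is a flatness statement for the Clifford deformation. By Theorem \ref{thm.1}, $S \in \sC_{n,0}$ forces $B(F) \in \sB_{n,n}$, so $h_1,\ldots,h_n$ is a regular sequence in $k[u_1,\ldots,u_n]$ with $\dim_k B(F) = 2^n$ (Remark \ref{rem.BCKos}). Because each $h_m$ is homogeneous of degree $2$, the initial form of $h_m - 2a_m$ with respect to the standard filtration is $h_m$; regularity of $(h_1,\ldots,h_n)$ then implies that $(h_m - 2a_m)$ is also a regular sequence and that $\operatorname{gr} S^!(\theta_f) \cong B(F)$. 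This yields $\dim_k S^!(\theta_f) = 2^n > 0$, as required.

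The \emph{In particular} clause then follows formally: every $A \in \sC_{n,r}$ is, by definition, $S/(f_1,\ldots,f_r)$ for some $S \in \sC_{n,0}$ and a central regular sequence $f_1,\ldots,f_r \in Z(S)_2$, and applying the first part to each $f_i$ produces $g_i \in S_1$ with $g_i^2 = f_i$, so $A = S/(g_1^2,\ldots,g_r^2)$. The only non-routine ingredient is the associated graded computation ensuring $S^!(\theta_f) \neq 0$; everything else is an assembly of results already proven in the paper.
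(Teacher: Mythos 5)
Your proof is correct, and its skeleton coincides with the paper's: normalize $F$ via Corollary \ref{cor.nor}, identify $Z(S)_2=\sum_m kx_m^2$ via Corollary \ref{cor.center}, and reduce the reverse inclusion to the nonemptiness of $\widetilde K_f\cong \Spec S^!(\theta_f)$ using the computation in the proof of Theorem \ref{thm.ctf}(1) (which, as you note, nowhere uses regularity of $f$). Where you genuinely diverge is the justification of that nonemptiness. The paper invokes Theorem \ref{thm.ctf}(2), i.e., $K_f\cong \Spec C(A)\cong \Proj A^!$ with $\dim_k C(A)=2^{n-1}\neq 0$, which rests on \cite[Lemma 5.1]{SmV} and implicitly on Proposition \ref{prop.sq} to guarantee that a nonzero $f\in Z(S)_2$ is regular so that $A=S/(f)\in\sC_{n,1}$ and Theorem \ref{thm.ctf}(2) applies at all. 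You instead show directly that $\dim_k S^!(\theta_f)=2^n$ by an associated-graded (Valabrega--Valla type) argument: the top forms $h_1,\dots,h_n$ of the relations $h_m-2a_m$ form a regular sequence because $B(F)\in\sB_{n,n}$, hence the ideal of leading forms is exactly $(h_1,\dots,h_n)$ and $\gr S^!(\theta_f)\cong B(F)$. This is a standard and correct fact, and it buys you a more self-contained argument: no appeal to the $C(A)$ machinery or the external dimension count, and no need to pre-establish regularity of $f$ or to treat $f=0$ separately. The one thing you should make explicit is the filtered-graded lemma itself (a regular sequence of top forms forces the leading-form ideal of the deformed ideal to be generated by those top forms), since it is not proved in the paper; the easy inclusion and the ``in particular'' clause are handled exactly as in the paper.
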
 

\begin{proof} If $S=C(F)$ for some normalized $F$, then $Z(S)_2=\sum_{m=1}^nkx_m^2$ by Corollary \ref{cor.center}.   
By the proof of Theorem \ref{thm.ctf} (1), $g^2\in \sum_{m=1}^nkx_m^2=Z(S)_2$, so  $Z(S)_2\supset \{g^2\in S_2\mid g_1\in S_1\}$.
On the other hand, for every $f\in Z(S)_2=\sum_{m=1}^nkx_m^2$, $K_f\neq \emptyset$ by Theorem \ref{thm.ctf} (2) so that there exists $g\in S_1$ such that $g^2=f$, so  $Z(S)_2\subset \{g^2\in S_2\mid g_1\in S_1\}$.  Since $S\cong C(F)$ for some normalized $F$ by Corollary \ref{cor.nor}, and the properties $Z(S)_2=\{g^2\in S_2\mid g\in S_1\}$ is preserved by isomorphisms of graded algebras, the result follows.  
\end{proof}

Note that the condition (G1) is essential to study and classify noncommutative conics $A\in \sA_{3, 1}$ in \cite{HMM}, so the following result is important.

\begin{theorem} \label{thm.SsA} Every $A\in \sC_{n, r}$ satisfies (G1).  In fact, 
if $S\in \sC_{n, 0}$ 
with $\cP(S)=(E, \s)$, and $A=S/(g_1^2, \dots, g_r^2)\in \sC_{n, r}$ where $g_1, \dots, g_r\in S_1$, 
then $\cP(A)=(E_A, \s_A)$ where  
$$E_A=(E\cap \cV(g_1, \dots, g_r))\cup \s(E\cap \cV(g_1, \dots, g_r)), \; \s_A=\s|_{E_A}.$$
\end{theorem}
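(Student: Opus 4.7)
With $V := S_1$, the quadratic presentation of $A$ reads $A = T(V)/(R_A)$ where $R_A = R + \sum_{i=1}^r k(g_i \otimes g_i) \subseteq V \otimes V$, since each $g_i \otimes g_i$ is a lift of $g_i^2 \in S_2$ to $V \otimes V$. Combined with $\mathcal{V}(R) = \{(p, \sigma(p)) : p \in E\}$ from Theorem \ref{thm.0G1}, this identifies
$$
\mathcal{V}(R_A) = \{(p, \sigma(p)) : p \in E, \, g_i(p) g_i(\sigma(p)) = 0 \text{ for every } i\}.
$$
Because $\sigma^2 = \id$ (Theorem \ref{thm.0G1}), the set $E_A$ is $\sigma$-stable (the two pieces of its defining union are swapped by $\sigma$), so $\sigma_A := \sigma|_{E_A}$ is a well-defined involution. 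The easy inclusion $E_A \subseteq \pi_1(\mathcal{V}(R_A))$ is a direct check using $\sigma^2 = \id$: if $p \in E \cap \mathcal{V}(g_1, \dots, g_r)$ then $g_i(p) = 0$, and if $p \in \sigma(E \cap \mathcal{V}(g_1, \dots, g_r))$ then $g_i(\sigma(p)) = 0$ for every $i$.

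For the reverse inclusion, let $p \in E$ with $g_i(p) g_i(\sigma(p)) = 0$ for every $i$. The fixed-point case $p = \sigma(p)$ is trivial, since then $g_i(p)^2 = 0$ forces $g_i(p) = 0$ for all $i$. Assume therefore $p \neq \sigma(p)$; then $H_p := \{h \in V : h(p) = 0\}$ and $H_{\sigma(p)}$ are distinct hyperplanes in $V$. The crucial extra input is that for any $h \in V$ with $h^2 \in W := \sum_{i=1}^r k g_i^2 \subseteq Z(S)_2$, the tensor $h \otimes h$ already lies in $R_A$ (its image in $S_2$ is $h^2 \in W$), so $h(p) h(\sigma(p)) = 0$. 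Hence the squaring preimage
$$
Q := \{h \in V : h^2 \in W\} \subseteq V
$$
is contained in $H_p \cup H_{\sigma(p)}$, and the elementary fact (valid over the infinite field $k$) that any linear subspace contained in the union of two distinct hyperplanes must lie wholly in one of them yields a powerful dichotomy on every linear subspace of $Q$.

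The main obstacle is to promote this dichotomy to all the $g_i$ simultaneously. Although each $g_i$ individually lies in $Q$, the linear span $\sum_i k g_i$ need not: from $(\sum \lambda_i g_i)^2 = \sum \lambda_i^2 g_i^2 + \sum_{i<j} \lambda_i \lambda_j (g_i g_j + g_j g_i)$, membership of $\sum_i k g_i$ in $Q$ would require the cross terms $b(g_i, g_j) := g_i g_j + g_j g_i \in Z(S)_2$ to lie in $W$ for every $i, j$, which is not automatic. To surmount this, I would use that $\mathcal{P}(A)$ is intrinsic to $A$, so the right-hand side of the formula must be independent of the chosen generators of the ideal $(g_1^2, \dots, g_r^2)$; and perform a Gram--Schmidt-style orthogonalization with respect to the vector-valued symmetric bilinear form $b \colon V \times V \to Z(S)_2$, leveraging Theorem \ref{thm.cen} (every element of $Z(S)_2$ is a square in $S_1$) together with the normalized presentation from Corollary \ref{cor.nor}, to produce an alternative generating set $g_1', \dots, g_r'$ of the same ideal with $b(g_i', g_j') \in W$ for all $i, j$, so that $\sum_i k g_i' \subseteq Q$. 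The hyperplane dichotomy applied to this linear subspace then gives $g_i'(p) = 0$ for every $i$ or $g_i'(\sigma(p)) = 0$ for every $i$; combined with the invariance of the formula for $E_A$ under generator swaps within the ideal, this forces $p \in E_A$ and completes the proof.
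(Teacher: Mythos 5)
Your reduction of the problem is correct and in fact sharper than the paper's own treatment: the paper argues by induction on $r$, applying \cite[Lemma 4.4]{HMM} to pass from $A$ to $A/(g_{r+1}^2)$, which yields the iterated formula $E_{A/(g_{r+1}^2)}=(E_A\cap\cV(g_{r+1}))\cup\s_A(E_A\cap\cV(g_{r+1}))$, and then silently identifies this with the displayed one-step formula; that identification is exactly the ``no mixed points'' assertion you isolated. So you have located the crux. Your proposed resolution, however, does not work, for two reasons. First, the orthogonalization step can be impossible: an $r$-dimensional subspace $U\subset V$ with $b(U,U)\subseteq W$ is a totally isotropic subspace for the induced $Z(S)_2/W$-valued symmetric form, and such a subspace need not exist. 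For $S=S^{(1,1,0)}$ and $W=kx^2+kz^2$ the reduced form on $V\cong k^3$ has Gram matrix of rank $3$, so its isotropic subspaces are at most lines and no $2$-dimensional $U$ exists; equivalently, your set $Q$ is there the irreducible cone $\{\lambda_2^2=\lambda_1\lambda_3\}$, which contains no plane. Second, the appeal to ``invariance of the formula for $E_A$ under generator swaps within the ideal'' is circular (it is a consequence of the theorem, not an available hypothesis), and it is in fact false.

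Indeed, the displayed formula fails for general $g_1,\dots,g_r$, so no argument can close this gap without extra hypotheses. Take $S=S^{(0,0,0)}$, $g_1=y-z$, $g_2=x+y+z$, so $g_1^2=y^2+z^2$, $g_2^2=x^2+y^2+z^2$ and $A=S/(g_1^2,g_2^2)=S/(x^2,y^2+z^2)\in\sC_{3,2}$. Then $\cV(g_1,g_2)=\{(-2,1,1)\}$ is disjoint from $E=\cV(xyz)$, so the formula predicts $E_A=\emptyset$; but $p=(0,1,1)\in E$ with $\s(p)=(0,1,-1)$ satisfies $g_1(p)=0$ and $g_2(\s(p))=0$, so $(p,\s(p))\in\cV(R_A)$ and $p\in E_A$. (With the generators $x$ and $y+z$ of the same ideal one gets $E_A=\{(0,1,1),(0,1,-1)\}$, matching Table \ref{tab.pa}.) This mixed point is driven by exactly the obstruction you computed: $g_1g_2+g_2g_1=2(y^2-z^2)\notin W$. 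The statement is therefore only correct under an additional hypothesis such as $g_ig_j+g_jg_i\in\sum_m kg_m^2$ for all $i,j$ (in which case your hyperplane-dichotomy argument does finish the proof cleanly), or with $E_A$ given by the iterated formula that the paper's induction actually produces. Your instinct that the cross terms are the obstacle was exactly right; the missing realization is that they are a genuine obstruction to the stated formula, not something that can be normalized away.
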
 

\begin{proof}  We prove by induction on $r$.  The result holds for $r=0$ by Theorem \ref{thm.0G1}.  Suppose that $A=S/(g_1^2, \dots, g_r^2)\in \sC_{n, r}$ satisfies (G1) where $$E_A=(E\cap \cV(g_1, \dots, g_r))\cup \s(E\cap \cV(g_1, \dots, g_r)), \; \s_A=\s|_{E_A}.$$
Since $\s_A^2=(\s|_{E_A})^2=\s^2|_{E_A}=\id$ by Theorem \ref{thm.0G1}, 
$S/(g_1^2, \dots, g_r^2, g_{r+1}^2)=A/(g_{r+1}^2)\in \sC_{n, r+1}$ satisfies (G1) where  
\begin{align*}
E_{A/(g_{r+1}^2)} & = (E_A\cap \cV(g_{r+1}))\cup \s(E_A\cap \cV(g_{r+1})) \\
&  = (E\cap \cV(g_1, \dots, g_r, g_{r+1}))\cup \s(E\cap \cV(g_1, \dots, g_r, g_{r+1})), \\
\s_{A/(g_{i+1}^2)} & =\s_A|_{E_{A/(g_{r+1}^2)}}=\s|_{E_{A/(g_{r+1}^2)}}
\end{align*}
by \cite[Lemma 4.4]{HMM}.  
\end{proof}

\begin{definition} \label{def.chva} Let $S=S^F\in \sC_{n, 0}$ for some normalized $F$ and $A=S/(g_1^2, \dots, g_r^2)\in \sC_{n, r}$ where $g_1, \dots, g_r\in S_1$.  For $g_i^2=\sum_{j=1}^na_{ij}x_j^2\in Z(S)_2$, we write 
$$
\tilde g_i:=\sum_{j=1}^na_{ij}u_j\in k[u_1, \dots, u_n]_1.$$  
The {\it characteristic varieties} of $A$ are defined by 
$$X^{(s)}_A:=X^{(s)}(F)\cap \cV(\tilde g_1, \dots, \tilde g_r).$$ 
\end{definition}

\begin{theorem} \label{thm.XA} 
If $S=T(V)/(R)=S^F\in \sC_{n, 0}$ for some normalized $F$, and $A=S/(g_1^2, \dots, g_r^2)\in \sC_{n, r}$ where $g_1, \dots, g_r\in S_1$, then $\Phi:E_S\to X^{(3)}_S$
restricts to (at most) a double cover 
$E_A\to X^{(3)}_A$.
\end{theorem}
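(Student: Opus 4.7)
The plan is to argue by induction on $r$. The base case $r=0$ is exactly Theorem \ref{thm.0G1}, which delivers both the surjectivity of $\Phi\colon E \to X^{(3)}(F)$ and the fact that the fibers have size at most two. For the inductive step, I would write $A = A'/(g_{r+1}^2)$ with $A' = S/(g_1^2, \dots, g_r^2) \in \sC_{n,r}$, invoke Theorem \ref{thm.SsA} to get
\[
E_A = (E_{A'} \cap \cV(g_{r+1})) \cup \s(E_{A'} \cap \cV(g_{r+1})), \qquad \s_A = \s|_{E_A},
\]
and observe that directly from Definition \ref{def.chva}, $X^{(3)}_A = X^{(3)}_{A'} \cap \cV(\tilde g_{r+1})$.

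The single computation that drives everything is the identity $\tilde g(\Phi(p)) = g(p)\cdot g(\s(p))$ for every $g = \sum_j b_j x_j \in S_1$ and every $p \in E$. To derive it I would write $g^2 = \sum_m a_m x_m^2$, so that $a_m = \tfrac{1}{2}\sum_{i,j} b_i b_j (F_m)_{ij}$ by the computation in the proof of Theorem \ref{thm.ctf}(1), hence $\tilde g = \sum_m a_m u_m$. Setting $q = \s(p)$ and $\Phi(p) = p*\s(p)$, the point-scheme equations $p_iq_j + p_jq_i = \sum_m (F_m)_{ij} p_mq_m$ established in the proof of Theorem \ref{thm.rXF} let us collapse
\[
\sum_m a_m p_m q_m = \tfrac{1}{2}\sum_{i,j} b_i b_j (p_i q_j + p_j q_i) = \sum_{i,j} b_i b_j p_i q_j = g(p)g(\s(p)).
\]

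With this identity in hand, well-definedness of the restriction is immediate: if $p \in E_A$ lies in $E_{A'} \cap \cV(g_{r+1})$ then $g_{r+1}(p) = 0$, while if $p \in \s(E_{A'} \cap \cV(g_{r+1}))$ then $g_{r+1}(\s(p)) = 0$; in either case $\tilde g_{r+1}(\Phi(p)) = g_{r+1}(p)\, g_{r+1}(\s(p)) = 0$, and since $\Phi(p) \in X^{(3)}_{A'}$ by induction, $\Phi(p) \in X^{(3)}_A$. For surjectivity, take $\xi \in X^{(3)}_A \subset X^{(3)}_{A'}$ and choose $p \in E_{A'}$ with $\Phi(p) = \xi$ via the induction hypothesis. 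From $\tilde g_{r+1}(\xi) = 0$, either $g_{r+1}(p) = 0$, in which case $p \in E_{A'} \cap \cV(g_{r+1}) \subset E_A$, or $g_{r+1}(\s(p)) = 0$, in which case $\s(p) \in E_{A'}$ (because $\s_{A'} \in \Aut E_{A'}$ by Theorem \ref{thm.SsA}), so $\s(p) \in E_{A'} \cap \cV(g_{r+1})$ and therefore $p = \s(\s(p)) \in \s(E_{A'} \cap \cV(g_{r+1})) \subset E_A$. The fibers of $\Phi|_{E_A}$ are contained in the fibers of $\Phi|_E$ and hence have size at most two.

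The main subtlety I anticipate is surjectivity: a direct non-inductive approach stumbles because $\tilde g_k(\xi) = 0$ only guarantees that, for each $k$ separately, one of $g_k(p), g_k(\s(p))$ vanishes, and a priori different indices could force the vanishing on different preimages, so neither $p$ nor $\s(p)$ need lie in $E_A$. Adding one relation at a time and exploiting the $\s$-invariance of $E_{A'}$ provided by Theorem \ref{thm.SsA} lets us toggle between $p$ and $\s(p)$ at each step, which is exactly what the induction does.
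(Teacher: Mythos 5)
Your proof is correct, and the computation at its heart --- $\tilde g(\Phi(p)) = g(p)\,g(\s(p))$ for $p\in E$, obtained from the point-scheme relations and the coefficients of $g^2$ in the basis $x_1^2,\dots,x_n^2$ --- is exactly the identity the paper's proof turns on (there it appears as $\tilde g_i(\Phi(p))=\sum_j a_{ij}p_jq_j=g_i^2(p,q)$). Where you diverge is in how $E_A$ is accessed. The paper works directly with the graph $\G_A=\G_S\cap\cV(g_1^2,\dots,g_r^2)$, so that $p\in E_A$ is \emph{equivalent} to $p\in E_S$ together with $g_i(p)\,g_i(\s(p))=0$ for all $i$ simultaneously; a single chain of equivalences then gives $E_A=(\Phi|_{E_S})^{-1}(X^{(3)}_A)$ for all $r$ at once, and both well-definedness and surjectivity fall out with no induction. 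You instead start from the union decomposition of $E_A$ furnished by Theorem \ref{thm.SsA} and, as you rightly observe, that decomposition read for all $r$ relations simultaneously does not yield surjectivity directly, because the vanishing of $g_k(p)\,g_k(\s(p))$ could be witnessed at $p$ for some $k$ and at $\s(p)$ for others; your induction, adding one relation at a time and using $\s_{A'}\in\Aut E_{A'}$ to toggle between $p$ and $\s(p)$, handles this cleanly. So the two arguments prove the same statement; the paper's graph-theoretic characterization is the more economical route (and is, in effect, why the mixed-vanishing issue never arises there), while your induction is a correct workaround that stays closer to the stated form of Theorem \ref{thm.SsA}. The only detail worth adding is that $A'=S/(g_1^2,\dots,g_r^2)$ does lie in $\sC_{n,r}$ so that the induction hypothesis applies; this follows from Proposition \ref{prop.sq}, since any subsequence of a sequence of linearly independent elements of $Z(S)_2$ is again a central regular sequence.
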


\begin{proof} 
Write $\G_S:=\cV(R)=\{(p, \s_S(p))\mid p\in E_S\}$ and $\G_A=\G_S\cap \cV(g_1^2, \dots, g_r^2)$. Since $A$ satisfies (G1) by Theorem \ref{thm.SsA},  $\G_A=\{(p, \s_A(p))\mid p\in E_A\}$. For $p\in E_S$ and $q=\s_S(p)\in E_S$ so that $(p, q)\in \G_S$, 
$$\tilde g_i(\Phi(p))=\sum_{j=1}^na_{ij}p_jq_j=g_i^2(p, q),$$ 
so 
\begin{align*}
p\in E_A  & \iff (p, q)\in \G_A \\
& \iff (p, q)\in \G_S,
\; g_i^2(p, q)
=0 \; \forall i=1, \dots, r \\& \iff p\in E_S,
\; g_i^2(p, q)
=0 \; \forall i=1, \dots, r \\
& \iff \Phi(p)\in X^{(3)}_S, \; \tilde g_i(\Phi(p))=0\; \forall i=1\dots, r \\
& \iff \Phi(p)\in X^{(3)}_A,
\end{align*}
hence $\Phi:E_S\to X^{(3)}_S$ restricts to (at most) a double cover 
$E_A\to X^{(3)}_A$. 
\end{proof}

\subsection{Noncommutative conics revisited} 

Noncommutative conics $\sA_{3, 1}$ were classified in \cite{HMM}.  We reclassify Clifford noncommutative conics $\sC_{3, 1}$ in terms of their characteristic varieties.

\begin{lemma} \label{lem.XsA} Let $S=S^F\in \sC_{n, 0}$ for some normalized $F$, $A=S/(g_1^2, \dots, g_r^2)\in \sC_{n, r}$ where $g_1, \dots, g_r\in S_1$, and $\Phi:E_S\to X^{(3)}_S; \; p\to p*\s_S(p)$ the double cover.  
\begin{enumerate}
\item{} For $p\in E_A$,  the following are equivalent: 
\begin{enumerate}
\item{} $\Phi^{-1}(\Phi(p))=\{p\}$.  
\item{} $\s_A(p)=p$. 
\item{} $\Phi(p)\in X^{(2)}_A$.
\end{enumerate}
\item{} The following are equivalent: 
\begin{enumerate}
\item{} $\Phi|_{E_A}:E_A\to X^{(3)}_A$ is strictly 2 to 1. 
\item{} $\s_A$ acts on $E_A$ freely.  
\item{} $X^{(2)}_A=\emptyset$. 
\end{enumerate}
\item{}  $\Phi$
induces isomorphisms 
$E_A/\<\s_A\>\to X^{(3)}_A$ and  $E_A^{\s_A}
\to X^{(2)}_A$ where $E_A^{\s_A}:=\{p\in E_A\mid \s_A(p)=p\}$.
\end{enumerate}
\end{lemma}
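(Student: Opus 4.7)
The plan is to leverage the explicit formula $\Phi(p)=p*\sigma_A(p)$ together with the factorization criteria for $X^{(2)}$ and $X^{(3)}$ from Lemma \ref{lem.x3F}.

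For part (1), the equivalence (a) $\Leftrightarrow$ (b) should be formal: using $\sigma_A^2=\id$ (inherited from Theorem \ref{thm.0G1} via Theorem \ref{thm.SsA}), one computes
$$
\Phi(\sigma_A(p))=\sigma_A(p)*\sigma_A^2(p)=\sigma_A(p)*p=p*\sigma_A(p)=\Phi(p),
$$
so $\{p,\sigma_A(p)\}\subset \Phi^{-1}(\Phi(p))$. Since $\Phi|_{E_A}$ is at most $2$-to-$1$ by Theorem \ref{thm.XA}, the fiber collapses to $\{p\}$ exactly when $\sigma_A(p)=p$. For (b) $\Leftrightarrow$ (c), I would reuse the key identity from the proof of Theorem \ref{thm.rXF}: writing $q=\sigma_A(p)$, we have $g_p g_q=\tfrac12\sum_m(p*q)_m f_m$ in $k[u_1,\dots,u_n]$. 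If $\sigma_A(p)=p$, then $g_pg_q=g_p^2$ is a square, so $\Phi(p)\in X^{(2)}(F)$ by Lemma \ref{lem.x3F}(2); combined with $p\in E_A$ giving $\Phi(p)\in\mathcal{V}(\tilde g_1,\dots,\tilde g_r)$, this yields $\Phi(p)\in X^{(2)}_A$. Conversely, if $\Phi(p)\in X^{(2)}_A$, then $2g_p g_q=\sum_m\Phi(p)_m f_m=h^2$ for some nonzero linear $h$, and the UFD $k[u_1,\dots,u_n]$ forces both $g_p$ and $g_q$ to be proportional to $h$, hence $p=q=\sigma_A(p)$ in $\mathbb{P}^{n-1}$.

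Part (2) is then a direct quantifier reformulation of part (1) over all $p\in E_A$: the restricted double cover is strictly $2$-to-$1$ iff no fiber collapses iff $\sigma_A$ has no fixed points iff no $p\in E_A$ satisfies (1)(c). Since $\Phi|_{E_A}$ surjects onto $X^{(3)}_A$ (Theorem \ref{thm.XA}) and $X^{(2)}_A\subset X^{(3)}_A$, the last condition is equivalent to $X^{(2)}_A=\emptyset$.

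For part (3), the map $\Phi|_{E_A}$ is constant on $\sigma_A$-orbits (paragraph above) and, by part (1), separates distinct orbits, so it descends to a set-theoretic bijection $E_A/\langle\sigma_A\rangle\to X^{(3)}_A$; the equivalence (b) $\Leftrightarrow$ (c) from part (1) similarly produces a bijection $E_A^{\sigma_A}\to X^{(2)}_A$. The main obstacle will be upgrading these bijections to \emph{isomorphisms of varieties}: the geometric quotient $E_A/\langle\sigma_A\rangle$ exists in characteristic $0$ since $\langle\sigma_A\rangle$ is a finite group acting on a projective variety, and the induced morphism to $X^{(3)}_A$ is a finite bijective morphism; one then applies Zariski's main theorem, after passing if necessary to reduced structures and using that $\Phi$ is the restriction of a morphism defined by explicit polynomial formulas, to conclude that the normalization map is an isomorphism onto its image. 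The analogous argument for the closed subvariety $E_A^{\sigma_A}\to X^{(2)}_A$ is easier since $\sigma_A$ acts trivially there.
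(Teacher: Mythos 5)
Your treatment of (1) and (2) is correct and essentially the paper's own argument: the paper likewise obtains (a)$\Leftrightarrow$(b) from the fact that $\Phi^{-1}(\Phi(p))=\{p,\s_A(p)\}$ (Theorem \ref{thm.0G1}), obtains (b)$\Leftrightarrow$(c) from Lemma \ref{lem.x3F}(2) together with the observation that $X^{(2)}_A=X^{(2)}(F)\cap X^{(3)}_A$ for $p\in E_A$, and then treats (2) as a direct quantification of (1). For (3), the paper says only that it ``follows directly from (1)'', i.e.\ it is content with the induced bijections, so your attempt to upgrade them to isomorphisms of varieties goes beyond what the paper records. That extra step, however, is not justified as written: Zariski's main theorem turns a finite bijective (or birational) morphism into an isomorphism only when the \emph{target} is normal, and $X^{(3)}_A$ is in general a determinantal locus cut by hyperplanes with no a priori normality (already $X^{(3)}(F)$ can be a union of lines or a nodal cubic). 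In the situations actually used later ($n=3$, $r\geq 1$), $X^{(3)}_A$ is either a finite reduced set of points or a single line in $\PP^{2}$, so the bijection is trivially an isomorphism there; if you want the variety-level statement in general, you should argue directly (e.g.\ via the explicit quadratic formulas defining $\Phi$ and the categorical quotient $E_A/\<\s_A\>$) rather than appeal to ZMT.
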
 

\begin{proof}  (1)
(a) $\Leftrightarrow $ (b):  For $p\in E_A$, $\Phi^{-1}(\Phi(p))=\{p, \s_A(p)\}$ by Theorem \ref{thm.0G1}, hence the result. 

(b) $\Leftrightarrow $ (c):  For $p\in E_S$, $p=\s_S(p)$ if and only if $\Phi(p)\in X^{(2)}(F)$ by Lemma \ref{lem.x3F} (2).  For $p\in E_A$, $\Phi(p)\in X^{(3)}_A$, so  $p=\s_A(p)$ if and only if $p=\s_S(p)$ if and only if $\Phi(p)\in X^{(2)}(F)\cap X^{(3)}_A=X_A^{(2)}$.  

(2) and (3) follow directly from (1).  
\end{proof} 

\begin{remark} Let $S=S^F\in \sC_{n, 0}$ for some normalized $F$, and $A=S/(g_1^2, \dots, g_r^2)\in \sC_{n, r}$ where $g_1, \dots, g_r\in S_1$.   By Lemma \ref{lem.XFB}, $X^{(s)}_S$
is independent of the choice of $F$ up to projective equivalence.  Moreover, after fixing a normalized $F$, $X^{(3)}_A=\Phi(E_A)$ and $X^{(2)}_A=\Phi(E_A^{\s_A})$ are  independent of the choice of $g_1, \dots, g_r$ by Theorem \ref{thm.XA} and Lemma \ref{lem.XsA}.  
\end{remark}

\begin{remark} \label{rem.EC} 
If $S\in \sC_{3, 0}$, then $\Phi^{-1}(\Phi(p))=\{p\}$ if and only if $p\in {\rm Sing}(E_S)$ if and only if $\Phi(p)\in X^{(2)}(F)={\rm Sing}(X^{(3)}(F))$ (see Example \ref{exm.phi3dim}).  Note that  $\s_S$ acts on $E_S$ freely if and only if $S$ is a 3-dimensional Sklyanin algebra (Type EC).  
\end{remark}

We say that geometric pairs $(E, \s), (E', \s')$ where $E, E'\subset \PP^{n-1}$ are isomorphic, denoted by $(E, \s)\cong (E', \s')$, if there exists $\t\in \Aut \PP^{n-1}$ which induces an isomorphism $\t:E\to E'$ such that $\t\s=\s'\t$.

\begin{theorem} \label{thm.gepa} 
Let $S=S^F, S'=S^{F'}\in \sC_{n, 0}$ for some normalized $F, F'$, and $A=S/(g_1^2, \dots, g_r^2), A'=S'/({g_1'}^2, \dots, {g'_r}^2)\in \sC_{n, r}$ where $g_1, \dots, g_r\in S_1, g_1', \dots, g_r'\in S'_1$.  For the following three conditions 
\begin{enumerate}
\item{} $A\cong A'$. 
\item{} $\cP(A)\cong \cP(A')$.
\item{} $X^{(3)}_A\cong_p X^{(3)}_{A'}$ which restricts to $X^{(2)}_A\cong_p X^{(2)}_{A'}$. 
\end{enumerate}
we have $(1) \Rightarrow (2) \Rightarrow (3)$.  If $n=3$ and $r=1,2,3$, then $(1), (2), (3)$ are all equivalent.
\end{theorem}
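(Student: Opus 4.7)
Proof plan. For $(1) \Rightarrow (2)$, I use the standard fact that a graded algebra isomorphism $\phi\colon A \to A'$ restricts to a linear isomorphism $\phi_1\colon A_1 \to A'_1$, which dualizes to a projective automorphism $\tau \in \Aut \PP^{n-1}$. Since $\phi$ preserves relations, the induced map $\tau \times \tau$ carries $\cV(R_A) \subset \PP^{n-1} \times \PP^{n-1}$ bijectively onto $\cV(R_{A'})$; combined with the description $\cV(R_A) = \{(p, \sigma_A(p)) : p \in E_A\}$ (and its analogue for $A'$), this yields $\tau(E_A) = E_{A'}$ and $\tau \sigma_A = \sigma_{A'} \tau$, which is exactly $\cP(A) \cong \cP(A')$.

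For $(2) \Rightarrow (3)$, the given $\tau \in \Aut \PP^{n-1}$ descends to an isomorphism of quotient varieties $E_A/\langle \sigma_A \rangle \cong E_{A'}/\langle \sigma_{A'} \rangle$ and restricts to an isomorphism of fixed-point loci $E_A^{\sigma_A} \cong E_{A'}^{\sigma_{A'}}$. By Lemma \ref{lem.XsA}(3), these transport to isomorphisms $X^{(3)}_A \cong X^{(3)}_{A'}$ and $X^{(2)}_A \cong X^{(2)}_{A'}$. To upgrade to projective equivalences inside $\PP^{n-1}$, I exploit the freedom in the choice of normalized $F$ (for $S$) and $F'$ (for $S'$) defining the characteristic varieties, using $\tau$ to match the pencils $\sum_m F_m y_m$ and $\sum_m F'_m y_m$ up to a linear change in the $y$-variables, together with a compatible matching of the linear forms $\tilde g_i$ and $\tilde g_i'$.

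For the equivalence in the case $n = 3$, $r \in \{1, 2, 3\}$, I apply Theorem \ref{thm.sq} to get $|\sC_{3,r}| = |\sB_{3,3-r}|$, which is $1, 3, 6$ for $r = 3, 2, 1$ respectively. The case $r = 3$ is trivial since $\sC_{3,3}$ is a singleton. For $r = 1, 2$, I enumerate the finitely many isomorphism classes using the classification of $3$-dimensional Clifford quantum polynomial algebras in Example \ref{exm.3-dimgcliff}, and compute $X^{(3)}_A$ and $X^{(2)}_A$ for each via Theorem \ref{thm.0G1}, Theorem \ref{thm.XA}, and Lemma \ref{lem.XsA}; the compiled results in Table \ref{tab.pa} distinguish these finitely many classes pairwise up to projective equivalence, establishing $(3) \Rightarrow (1)$.

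The main obstacle is the projective-equivalence assertion in $(2) \Rightarrow (3)$: the descent step gives only an abstract isomorphism of varieties, and realizing it by a specific element of $\PGL_n(k)$ requires a careful coordinated choice of normalizations. The finite case analysis in the $n = 3$ setting is conceptually straightforward but computationally extensive, and its output is precisely what Table \ref{tab.pa} records.
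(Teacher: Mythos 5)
Your plan coincides with the paper's proof in all essentials: $(1)\Rightarrow(2)$ is the standard transport of the geometric pair along a graded isomorphism (the paper just cites \cite[Lemma 2.5]{MU}); $(2)\Rightarrow(3)$ goes through the identifications $X^{(3)}_A\cong E_A/\<\s_A\>$ and $X^{(2)}_A\cong E_A^{\s_A}$ of Lemma \ref{lem.XsA}(3); and $(3)\Rightarrow(1)$ for $n=3$, $r=1,2,3$ is exactly the counting argument, $\#(\sC_{3,r})=\#(\sB_{3,3-r})=6,3,1$ by Theorem \ref{thm.sq} and Remark \ref{rem.Sesy}, with the pairs $(\#(X^{(3)}_A),\#(X^{(2)}_A))$ of Table \ref{tab.pa} already separating the classes. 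The one point where you diverge is in trying to upgrade the abstract isomorphism $X^{(3)}_A\to X^{(3)}_{A'}$ of $(2)\Rightarrow(3)$ to a projective equivalence; the concern is legitimate (the paper passes over it silently), but your proposed mechanism --- re-choosing the normalizations so that $\tau$ ``matches the pencils'' $\sum_m F_my_m$ and $\sum_m F'_my_m$ --- is not available from hypothesis (2), which gives only $\cP(A)\cong\cP(A')$ and no isomorphism $S\cong S'$ nor a relation $F\sim_{st}F'$. A cleaner route is to note that for $(p,q)\in\cV(R)$ the relations $p_iq_j+p_jq_i=\sum_m(F_m)_{ij}p_mq_m$ express every symmetrized quadratic monomial as a linear combination of the diagonal ones $p_mq_m$, so if $\tau$ is given by $M\in\GL_n(k)$ then $\Phi'(\tau(p))=\tau(p)*\tau(\s_A(p))$ is, coordinatewise, a linear function of $\Phi(p)=p*\s_A(p)$; hence the induced map on characteristic varieties is the restriction of a linear map. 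In any case this refinement is immaterial for closing the cycle of implications, since $(3)\Rightarrow(1)$ uses only the cardinalities, so your outline is sound and matches the paper's.
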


\begin{proof} 

(1) $\Rightarrow $ (2):  This is true for general quadratic algebras $A, A'$ satisfying (G1)  by \cite[Lemma 2.5]{MU}. 

(2) $\Rightarrow$ (3): If $\cP(A)\cong \cP(A')$, then there exists an isomorphism $\t:E_A\to E_{A'}$ such that $\t\s_A=\s_{A'}\t$.  For $p\in E_A$, $p=\s_A(p)$ if and only if $\t(p)=\t\s_A(p)=\s_{A'}(\t(p))$, so $\t$ induces isomorphisms $X_A^{(3)}\cong E_A/\<\s_A\>\to E_{A'}/\<\s_{A'}\>\cong X_A^{(3)}$ and $X_A^{(2)}\cong E_A^{\s_A}\to E_{A'}^{\s_{A'}}\cong X_A^{(2)}$ by Lemma \ref{lem.XsA} (3).
 
Assume now that $n=3$ and $r = 1,2,3$.

$(3) \Rightarrow (1)$: If $X^{(3)}_A\cong_p X^{(3)}_{A'}$ which restricts to $X^{(2)}_A\cong_p X^{(2)}_{A'}$, then 
clearly, $(\#(X^{(3)}_A), \#(X^{(2)}_A))=(\#(X^{(3)}_{A'}), \#(X^{(2)}_{A'}))$.  By direct calculations, 
there are (at least) 6 distinct pairs of numbers appearing as $(\#(X^{(3)}_A), \#(X^{(2)}_A))$ (see Table \ref{tab.pa} of Section 5).  On the other hand,  $\#(\sC_{3, 1})=\#(\sB_{3, 2})=6$ by Theorem \ref{thm.sq} and Remark \ref{rem.Sesy}, hence the result follows. 
\end{proof}

For $A\in \sC_{3,1}$, we can check smoothness of $\Projn A$ by its characteristic varieties.

\begin{corollary} 
For $A\in \sC_{3, 1}$, 
the following are equivalent:
\begin{enumerate}
\item{} $\Projn A$ is smooth. 
\item{} $C(A)\cong k^4$.
\item{} $\#(E_A)=6$.   
\item{} $\#(X^{(3)}_A)=3$ and $X^{(2)}_A=\emptyset$.  
\end{enumerate} 
\end{corollary}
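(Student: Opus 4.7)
The plan is to reduce the four-way equivalence to the classification of $\sC_{3,1}$ established earlier, handling the easy implications directly and the rest via the six-type enumeration.

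First, $(1) \Leftrightarrow (2)$ is immediate: it is the case $n=3$ of the preceding corollary on smoothness in $\sC_{n,1}$, since $2^{n-1} = 4$.

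Second, $(4) \Rightarrow (3)$ follows directly from Lemma \ref{lem.XsA}(2). If $X^{(2)}_A = \emptyset$, then the restriction $\Phi|_{E_A} : E_A \to X^{(3)}_A$ of the double cover of Theorem \ref{thm.XA} is strictly $2$-to-$1$, so $\#(E_A) = 2\#(X^{(3)}_A) = 6$.

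For the remaining implications $(3) \Rightarrow (4)$ and $(3) \Rightarrow (2)$, my plan is to invoke Theorem \ref{thm.gepa}: in the case $n = 3,\ r = 1$, an algebra $A \in \sC_{3,1}$ is determined up to isomorphism by the projective equivalence class of its characteristic varieties, and by Theorem \ref{thm.sq} together with Remark \ref{rem.Sesy} there are exactly $\#(\sC_{3,1}) = \#(\sB_{3,2}) = 6$ isomorphism classes. It then suffices to exhibit the six representatives and record, for each, the values $\#(E_A),\ \#(X^{(3)}_A),\ \#(X^{(2)}_A)$, and $\dim_k C(A)$ (equivalently $\#(K_f)$, by Theorem \ref{thm.ctf}(2)). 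This is exactly the content of Table \ref{tab.pa}, from which one reads off that $\#(E_A) = 6$ is attained by a unique class, and that for this class $(\#(X^{(3)}_A), \#(X^{(2)}_A)) = (3,0)$ and $C(A) \cong k^4$. Both remaining implications follow at once.

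The main effort in this approach lies in producing Table \ref{tab.pa} itself: for each representative $A = S^F/(g^2) \in \sC_{3,1}$ one must assemble the normalized matrix data $F$, the linear form $g$, the geometric pair $\cP(S^F) = (E_{S^F}, \sigma_{S^F})$ from Example \ref{exm.3-dimgcliff}, then compute $E_A = (E_{S^F} \cap \cV(g)) \cup \sigma_{S^F}(E_{S^F} \cap \cV(g))$ via Theorem \ref{thm.SsA}, the intersections $X^{(s)}(F) \cap \cV(\tilde g)$, and the algebra $C(A)$. Once these data are tabulated, the conceptual content of the corollary reduces to the observation that on the double cover $\Phi|_{E_A}$ of Theorem \ref{thm.XA}, only the generic configuration $(3,0)$ is compatible with maximal $\#(E_A) = 6$ and with $C(A)$ being reduced.
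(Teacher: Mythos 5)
Your decomposition is genuinely different from the paper's, and most of it is sound, but as written the four conditions are not proved equivalent. You establish $(1)\Leftrightarrow(2)$ (via the preceding corollary with $n=3$), $(4)\Rightarrow(3)$ (via Lemma \ref{lem.XsA} (2)), and then $(3)\Rightarrow(4)$ and $(3)\Rightarrow(2)$ from the classification. This set of implications is not strongly connected: there is no path from $(1)$ or $(2)$ to $(3)$ or $(4)$, so a priori $\Projn A$ could be smooth for some $A$ with $\#(E_A)\neq 6$. You need the converse $(2)\Rightarrow(3)$, which your enumeration can supply but which you never extract: you must read off from the table that for each of the five classes with $\#(E_A)\neq 6$ one has $C(A)\not\cong k^4$, i.e., that $C(A)\cong k^4$ singles out the \emph{same} unique class, not merely that the class with $\#(E_A)=6$ happens to have $C(A)\cong k^4$. (Relatedly, tabulating $\dim_k C(A)$ is vacuous --- it always equals $2^{n-1}=4$ by \cite[Lemma 5.1]{SmV}; the relevant datum is whether $C(A)$ is reduced, i.e., $\#(K_f)$.)

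Once that is repaired, your route works but is heavier than the paper's. The paper cites \cite[Theorem 5.15]{HMM} for $(1)\Leftrightarrow(2)\Leftrightarrow(3)$ and proves $(3)\Leftrightarrow(4)$ directly, with no case analysis: the direction $(4)\Rightarrow(3)$ is exactly your Lemma \ref{lem.XsA} (2) argument, and for $(3)\Rightarrow(4)$ it observes that $\#(E_S\cap\cV(g))\leq 3$ by Bezout's theorem, so $\#(E_A)=6$ forces $E_A$ to be the disjoint union of $E_S\cap\cV(g)$ and $\sigma(E_S\cap\cV(g))$, whence $\sigma_A$ acts freely on $E_A$ and Lemma \ref{lem.XsA} (2) gives $\#(X^{(3)}_A)=3$ and $X^{(2)}_A=\emptyset$. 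This avoids recomputing $E_A$, the characteristic varieties and $C(A)$ for all six representatives, and in particular does not depend on the completeness and correctness of Table \ref{tab.pa} or on Theorem \ref{thm.gepa}.
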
 

\begin{proof} (1) $\Leftrightarrow$ (2) $\Leftrightarrow$ (3):  This was shown in \cite [Theorem 5.15]{HMM}.  

(3) $\Leftrightarrow$ (4):  
If $\#(X^{(3)}_A)=3$ and $X^{(2)}_A=\emptyset$, then 
$\Phi:E_A\to X^{(3)}_A$ is strictly 2 to 1 by Lemma \ref{lem.XsA} (2), so $\#(E_A)=6$.  Conversely, suppose that $\#(E_A)=6$.  Since $\#(E_S\cap \cV(g))\leq 3$ by Bezout's theorem (if it is finite), so $E_A$ is the disjoint union of $E_S\cap \cV(g)$ and $\s(E_S\cap \cV(g))$ by Theorem \ref{thm.SsA}.  
It follows that $\s_A$ acts on $E_A$ freely, so $\Phi:E_A\to X^{(3)}_A$ is strictly 2 to 1, hence  $\#(X^{(3)}_A)=3$ and $X^{(2)}_A=\emptyset$ by Lemma \ref{lem.XsA} (2).
\end{proof} 

\newpage

\section{Appendix: Tables}

Recall that, for $a, b, c\in k$, we define the graded algebra
$$
S^{(a,b,c)}:=k\langle x,y,z \rangle/(yz + zy + ax^2, zx + xz + b y^2, xy + yx + cz^2).
$$

\begin{table}[h]
\caption{Geometric pairs of $S^{(a, b, c)}\in \sC_{3, 0}$.} \label{3dimcliqpa}
\begin{tabular}{|c|c|c|}  \hline
Type & $(a,b,c)$ & {\centering  Geometric pair $(E,\sigma)$} \\ \hline  \hline
S & $(0,0,0)$ & \rule{0pt}{31pt}
\parbox{9.2cm}{$E=\mathcal{V}(xyz)$ is a triangle  with singular points $(1,0,0)$, 
$(0,1,0)$,$(0,0,1) \in \mathbb{P}^2$, 
and 
$\left\{
	\begin{array}{ll}
	\sigma(0, b, c)=(0,  b,  -c) \\
	\sigma(a, 0, c)=(-a, 0,  c) \\
	\sigma(a, b, 0)=(a, -b,  0)
	\end{array}
	\right.$ }\\[22pt] \hline 
S' & $(1,0,0)$ &  \rule{0pt}{31pt}
\parbox{9.2cm}{$E=\mathcal{V}(x^3-2 xyz)$ is the union of a smooth conic and 
a line with singular points $(0,1,0)$, $(0,0,1)\in \mathbb{P}^2$, and 
$\left\{
	\begin{array}{ll}
	\sigma(0,b,c)=(0,b, -c)\\
	\sigma(a,b,c)=(a,-b,-c)\\
	\end{array}
	\right. $ }\\[22pt] \hline
NC & $(1,1,0)$ & \rule{0pt}{31pt}
\parbox{9.2cm}{$E=\mathcal{V}(x^3 + y^3 - 2xyz)$ is a nodal cubic curve  with  a singular point 
$(0,0,1) \in \mathbb{P}^2$, and 
$\left
\{\begin{array}{l}
\sigma(a, b, c) = (a,-b,c -\frac{a^2}{b})  \\
\sigma(0,0,1) = (0,0,1).
\end{array}
\right.$ }\\[22pt] \hline
EC & $(\lambda,\lambda,\lambda)$ & \rule{0pt}{67pt}
\parbox{9.2cm}{$E = \mathcal{V}(\lambda(x^3 + y^3 + z^3) - (\lambda^3+2)xyz)$ is an elliptic 
curve, 
and $\sigma$ is the translation by the $2$-torsion point 
$(1,1,\lambda) \in E$  (with the zero element $(1,-1,0) \in E$),  explicitly,
$\s(a, b, c)=\begin{cases} (\l b^2-ac, \l a^2-bc, c^2-\l ^2ab) \\
\textnormal { if } (a, b, c)\in E\setminus E_1; \\
(\l c^2-ab, b^2-\l^2 ac, \l a^2-bc) \\ 
\textnormal { if } (a, b, c)\in E\setminus E_2,
\end{cases}$ 
where $E_1:=\{(1, \e, \l \e^2)\mid \e^3=1\}$,  
$E_2:=\{(1, \l \e^2, \e)\mid \e^3=1\} \subset E$. }\\[12.5ex] \hline
\end{tabular}
\end{table}


\begin{table}[h]
\caption{$\Phi:E\to X^{(3)}(F)$ for $S^{(a, b, c)}\in \sC_{3, 0}$.} \label{3cliqpaphi}
\begin{tabular}{|c|c|c|}  \hline
Type  & \rule{0pt}{11.5pt} $\Phi:E\to X^{(3)}(F)$ \\ \hline  \hline
S  & \rule{0pt}{10pt}
\parbox{11.2cm}{
$S^{(0,0,0)}$ is a special case of Example \ref{exm.skewpa}.
}\\
\hline 
S'  &  \rule{0pt}{51pt}
\parbox{11.2cm}{
$S^{(1,0,0)} = C(F)$ where \\
$
F = \left(\begin{pmatrix} 2 & 0 & 0 \\ 0 & 0 & -1 \\  0 & -1 & 0\end{pmatrix}, \begin{pmatrix} 0 & 0 & 0\\ 0 & 2 & 0 \\ 0 & 0 & 0 \end{pmatrix}, \begin{pmatrix} 0 & 0 & 0 \\ 0 & 0 & 0 \\  0 & 0 & 2 \end{pmatrix}  \right)
$. \\
 $X^3(F) = \mathcal{V}(y_1^3 - 4 y_1y_2y_3)$,\\
$\Phi:  E \to X^3(F); \ 
\left\{
	\begin{array}{ll}
	(0, b, c)\mapsto (0,  b^2,  -c^2) \\
	(a, b, c)\mapsto(a^2, -b^2,  -c^2).
	\end{array}
	\right.$
}
\\[41pt] \hline
NC & \rule{0pt}{51.5pt}
\parbox{11.2cm}{
$S^{(1,1,0)} = C(F)$ where \\
$
F = \left(\begin{pmatrix} 2 & 0 & 0 \\ 0 & 0 & -1 \\  0 & -1 & 0\end{pmatrix}, \begin{pmatrix} 0 & 0 & -1\\ 0 & 2 & 0 \\ -1 & 0 & 0 \end{pmatrix}, \begin{pmatrix} 0 & 0 & 0 \\ 0 & 0 & 0 \\  0 & 0 & 2 \end{pmatrix} \right).
$ \\ 
$X^3(F) = \mathcal{V}(y_1^3 - 4 y_1y_2y_3)$, \\ 
$\Phi:  E \to X^3(F); \ 
\left\{
	\begin{array}{ll}
	(a, b, c)\mapsto (a^2,  -b^2,  c^2 - \frac{a^2c}{b}) \\
	(0, 0, 1)\mapsto(0, 0, 1).
	\end{array}
	\right.$
}
\\[36pt] \hline
EC  & \rule{0pt}{73pt}
\parbox{11cm}{
$S^{(\lambda,\lambda,\lambda)} = C(F)$ where \\
$
F = \left(\begin{pmatrix} 2 & 0 & 0 \\ 0 & 0 & -\lambda \\  0 & -\lambda & 0\end{pmatrix}, \begin{pmatrix} 0 & 0 & -\lambda \\ 0 & 2 & 0 \\ -\lambda & 0 & 0 \end{pmatrix}, \begin{pmatrix} 0 & -\lambda & 0 \\ -\lambda & 0 & 0 \\  0 & 0 & 2 \end{pmatrix}  \right),
$ \\
$\lambda \in k$ such that $\lambda^3 \neq 0,1,-8$. \\
$X^{(3)}(F)= \mathcal{V}(\lambda^2 (y_1^3 + y_2^3 + y_3^3) - (4-\lambda^3)y_1 y_2 y_3),$ \\ 
$\Phi:  E \to X^3(F); \ (a, b, c)\mapsto$ \\
$ 
\begin{cases} (a(\l b^2-ac), b(\l a^2-bc), c(c^2-\l ^2ab)) & \textnormal { if } (a, b, c)\in E\setminus E_1 \\
(a(\l c^2-ab), b(b^2-\l^2 ac), c(\l a^2-bc)) & \textnormal { if } (a, b, c)\in E\setminus E_2 \end{cases}
$
where
$E_1:=\{(1, \e, \l \e^2)\mid \e^3=1\}, E_2:=\{(1, \l \e^2, \e)\mid \e^3=1\} \subset E$.
}
\\[13.5ex] \hline
\end{tabular}
\end{table}

\begin{landscape}

\begin{table}[ht]\small
\begin{threeparttable}
\caption{$\sC_{3,r}$ for $r = 1,2,3$.} \label{tab.pa}
\begin{tabular}{|c|c|c|c|c|c|} 
\hline
$A \in \sC_{3,1}$  
& $\mathcal{P}(A)$ 
& $(\#(X_A^{(3)}),\#(X_A^{(2)}))$
&  $E_{A^!} $ 
\\ \hline \hline
$S^{(0,0,0)}/(x^2)$ 
& \begin{tabular}{c}$E_A$ is a line, \\ $\sigma_A$ fixes exactly $2$ points
\end{tabular} 
& $(\infty,2)$
& \begin{tabular}{c}$E_{A^!}$ consists of $1$ point, \\ with Segre sym.: $[1, 1 ; \ ; 1]$ \end{tabular} 
\\ \hline
$S^{(1,0,0)}/(y^2)$ 
& \begin{tabular}{c}$E_A$ consists of $1$ point,\\ $\sigma_A$  maps  the point to itself  \end{tabular} 
& $(1,1)$
& \begin{tabular}{c}$E_{A^!}$ consists of $1$ point, \\ with Segre sym.: $[(2, 1)]$ \end{tabular}  
\\ \hline
$S^{(1,1,0)}/({3}x^2 + {3}y^2 + 4z^2)$
&  \begin{tabular}{c}$E_A$ consists of $2$ points, \\
$\sigma_A$ switches these $2$ points
\end{tabular}  
& $(1,0)$
& \begin{tabular}{c}$E_{A^!}$ consists of $2$ points, \\ with Segre sym.: $[3]$ \end{tabular} 
\\ \hline
$S^{(0,0,0)}/(x^2 + y^2)$ 
&  \begin{tabular}{c}$E_A$ consists of $3$ points in general position, \\ $\sigma_A$ permutes these $3$ points as: 
{\begin{tikzpicture}[x=10, y=10, baseline=(current bounding box)]
\draw[white] (0, -1) -- (0, 1) ;
\fill (2, 2/3) circle (1.2pt) ; 
\fill (2, -2/3) circle (1.2pt) ;
\fill (0, 0.3)  circle (1.2pt) ;
\draw[->] (0,0.3)  arc (90:400:1.5mm) ;
\draw[<->]
      (2.3,2/3)
      to[out=-10, in=30] (2.3,-2/3) ;
\end{tikzpicture}}\end{tabular} 
& $(2,1)$
& \begin{tabular}{c}$E_{A^!}$ consists of $2$ points,\\ with Segre sym.: $[(1, 1), 1]$\end{tabular}
\\ \hline
$S^{(1,0,0)}/(x^2 + y^2 + z^2)$ 
&  \begin{tabular}{c}$E_A$ consists of $4$ points with $3$ points collinear,\\ $\sigma_A$ permutes these $4$  points as:
 \begin{tikzpicture}[x=10, y=10, baseline=(current bounding box)]
\draw[white] (0, -0.8) -- (0, 2.5) ;
\draw[dashed] (-1.3, 0) -- (4, 0) ; 
\fill (-1, 0) circle (1.2pt) ;
\fill (0, 2) circle (1.2pt) ;
\fill (3/2, 0) circle (1.2pt) ;
\fill (3, 0) circle (1.2pt) ;
\draw[<->]
      (-1+0.08, -0.15-0.12)
      to[out=-30, in=-150] (3/2-0.1, -0.25);
      \draw[<->]
      (0.15+0.13, 2)
      to[out=-10, in=130] (3-0.03, 0.13+0.15);
\end{tikzpicture} \end{tabular} 
& $(2,0)$
& \begin{tabular}{c}$E_{A^!}$ consists of $3$ points,  \\ with Segre sym.: $[2, 1]$ \end{tabular}
\\ \hline
$S^{(0,0,0)}/(x^2 + y^2 + z^2)$ 
&  \begin{tabular}{c}$E_A$ consists of $6$ points in a  quadrilateral \\ configuration,  $\sigma_A$  permutes these $6$ points as: \\
\begin{tikzpicture}[x=11, y=11, baseline=(current bounding box)]
\draw[dashed] (-3.5, 0) -- (0.8, 0) ; 
\draw[dashed] (-2.2, -3.3) -- (1/2, 3/4) ; 
\draw[dashed] (-1.4, 0.6) -- (-2.08, -3.5) ; 
\draw[dashed] (-3.5, 0.375) -- (-0.2, -2.1) ; 
\fill (0, 0) circle (1.2pt) ; 
\fill (-3, 0) circle (1.2pt) ;
\fill (-1.5, 0) circle (1.2pt) ;
\fill (-1, -3/2) circle (1.2pt) ;
\fill (-2, -3) circle (1.2pt) ;
\fill (-5/3, -1) circle (1.2pt) ;
\draw[<->]
      (-0.15, -0.11)
      to[out=-150, in=30] (-5/3+0.16, -0.92);
\draw[<->]
      (-3.08, -0.18)
      to[out=-110, in=150] (-2.17, -2.9);
\draw[<->]
      (-1.4, -0.15)
      to[out=-60, in=100] (-1, -1.32);
\end{tikzpicture}\end{tabular} 
& $(3,0)$
& \begin{tabular}{c}$E_{A^!}$ consists of $4$ points, \\ with Segre sym.: $[1, 1, 1]$\end{tabular} 
\\ \hline \hline
$A \in \sC_{3,2}$ 
& $\mathcal{P}(A)$ 
& $(\#(X_A^{(3)}),\#(X_A^{(2)}))$
&  $E_{A^!}$ 
\\ \hline \hline
$S^{(0,0,0)}/(x^2, y^2)$ 
& \begin{tabular}{c}$E_A$ consists of $1$ point, \\
$\sigma_A$  maps the point to itself   \end{tabular}  
& $(1,1)$
& \begin{tabular}{c}$E_{A^!}$ is a (double) line \end{tabular} 
\\ \hline
$S^{(0,0,0)}/(x^2 + y^2, z^2)$ 
 & \begin{tabular}{c}$E_A$ consists of $2$ points, \\
$\sigma_A$ switches these $2$ points
 \end{tabular}  
 & $(1,0)$
& \begin{tabular}{c}$E_{A^!}$  is the union of $2$ lines in  \\ general position \end{tabular}  
 \\ \hline
$S^{(0,0,0)}/(x^2 + y^2,  x^2 + z^2 )$ 
 & $E_A = \emptyset$  
 & $(0,0)$
 & \begin{tabular}{c}$E_{A^!}$  is a smooth conic \end{tabular} 
 \\ \hline \hline
$A \in \sC_{3,3}$ 
 & $\mathcal{P}(A)$ 
 & $(\#(X_A^{(3)}),\#(X_A^{(2)}))$
 & $E_{A^!}$
 \\ \hline
$S^{(0,0,0)}/(x^2,y^2,z^2)$ & $E_A = \emptyset$
& $(0,0)$
&  $\mathbb{P}^2$  
\\ \hline
\end{tabular}
\begin{tablenotes}
\item For the notation of Segre symbol, we follow \cite{W}.
\end{tablenotes}
\end{threeparttable}
\end{table}

\end{landscape}

\end{document}